\setlist[itemize]{leftmargin=*}
\def\expandafter\normalsize\expandafter{%
    \normalsize%
    \setlength\abovedisplayskip{5pt}%
    \setlength\belowdisplayskip{6pt}%
    \setlength\abovedisplayshortskip{-2pt}%
    \setlength\belowdisplayshortskip{4pt}%
}
\title{\textbf{On the Impact of Overparameterization on the Training of a \\ Shallow Neural Network in High Dimensions}}
\date{}
\author[1, 2]{\large Simon Martin}
\author[1]{\large Francis Bach}
\author[2]{\large Giulio Biroli}
\affil[1]{\small INRIA - Ecole Normale Supérieure,
PSL Research University}
\affil[2]{\small Laboratoire de Physique de l’Ecole Normale Supérieure, ENS, Université PSL, CNRS, \protect\\ Sorbonne Université, Université de Paris, F-75005 Paris, France}
\date{}
\begin{document}

\renewcommand{\.}{\cdot}
\newcommand{\R}{\mathbb{R}}
\renewcommand{\L}{\mathcal{L}}
\renewcommand{\d}{\mathrm{d}}
\newcommand{\E}{\mathbb{E}}
\newcommand{\N}{\mathcal{N}}
\renewcommand{\P}{\mathbb{P}}
\renewcommand{\sp}{\mathrm{Sp}}
\newcommand{\tr}{\mathrm{Tr}}
\newcommand{\1}{\mathbb{1}}
\newcommand{\md}{\mathcal{M}_{d,m}}
\renewcommand{\H}{\mathcal{H}}
\newcommand\ddfrac[2]{\frac{\displaystyle #1}{\displaystyle #2}}

\newtheorem{theorem}{Theorem}[section]
\newtheorem{proposition}{Proposition}[section]
\newtheorem{assumption}{Assumption}[section]
\newtheorem{lemma}{Lemma}[section]

\maketitle
\begin{abstract}
We study the training dynamics of a shallow neural network with quadratic activation functions and quadratic cost in a teacher-student setup. In line with previous works on the same neural architecture, the optimization is performed following the gradient flow on the population risk, where the average over data points is replaced by the expectation over their distribution, assumed to be Gaussian. 

We first derive convergence properties for the gradient flow and quantify the overparameterization that is necessary to achieve a strong signal recovery. Then, assuming that the teachers and the students at initialization form independent orthonormal families, we derive a high-dimensional limit for the flow and show that the minimal overparameterization is sufficient for strong recovery. We verify by numerical experiments that these results hold for more general initializations. 
\end{abstract}

\section{Introduction}
While neural networks have revolutionized various domains such as image recognition \citep{Krizh}, image generation \citep{Goodfellow}, and natural language processing \citep{chatgpt}, a strong gap remains between their practical achievements and the theoretical understanding of their behaviours. In addition to the formal guarantees that such an understanding can provide, new theoretical insights may lead to an improvement of optimization algorithms as well as the development of more robust and reliable models. 

One main obstacle to a general theoretical study of neural networks is the fact that they are characterized by highly non-convex loss functions. As a consequence, trajectories of gradient-based algorithms and their convergence properties are often hard to understand. Indeed, even for shallow architectures, it is known that loss landscapes of neural networks possess spurious local minima in which parameters can be trapped during optimization \citep{safran,choromanska,christof,baity2018comparing}. However, some recent works show that in some limits, either those local minima tend to disappear from the landscape, or gradient-based algorithms avoid them despite their presence. 
This is for instance the case in highly overparameterized networks \citep{chizat,du2,mei} or when optimizing with a very large amount of data \citep{du3,li,tian} or in high-dimensional inference \citep{sarao2019afraid}. Note that those results are often obtained in purely theoretical limits, where the number of neurons or data points go to infinity. A key open question to determine is how many neurons are needed to achieve global optima; this paper provides an answer for an idealized problem.

\subsection{Contributions} 
We investigate the effect of overparameterization on the training of a shallow neural network in a teacher-student setup. Our model is a one-hidden layer neural network with quadratic activation functions. The optimization is performed on the population loss (i.e., in the infinite data limit) under the assumption of Gaussian data points. While we assume without loss of generality that the number of neurons of the teacher network (denoted $m^*$) is less than the dimension $d$, we do not constraint the number of neurons of the student (denoted $m$). In this setup:

\begin{itemize}
    \item In \cref{sec:dynamics} we derive a general solution of the gradient flow depending on an unknown scalar function which is solution of an implicit equation. 
    \item In \cref{sec:CV} we show that the gradient flow always converges to the global minimizer of the loss. In the case where the student network has more neurons $m$ than the teacher, this global optimum corresponds to a perfect recovery of the teacher network. In the overparameterized case, i.e., $m \geq m^*$, we derive tight convergence rates for the gradient flow. 
    \item In \cref{sec:highdim}, assuming that the teacher vectors and students at initialization form orthonormal families, drawn from an appropriate distribution, we derive a high-dimensional limit for the flow. In this limit, we show that strong recovery is achieved as soon as there are more students than teachers. A key feature of our analysis is to go beyond the case $m=m^\ast=1$ by letting these two quantities to grow with dimension $d$.
\end{itemize}

\subsection{Related Works} 
\paragraph{Shallow neural networks with quadratic activations.} One hidden-layer neural networks with quadratic activation functions have already been studied in the literature. \citet{Du} as well as \citet{Soltano} focus on the empirical loss and derive landscape properties in the overparameterized case. More precisely, \citet{Du} obtain that for $m \geq d$, the landscape does not admit spurious local minimizers in the case where the output weights are fixed (which corresponds to our setup), and \citet{Soltano} derive the same property for $m \geq 2d$, if the output weights can also be learned. This result is also obtained by \citet{venturi} for the case of the population loss. 

Moreover, \citet{Garmanik} and \citet{mannelli} worked in the exact same setup as ours. The main differences are that \citet{Garmanik} focus on the case where students and teachers have more neurons than the dimension ($m, m^* \geq d$), and \citet{mannelli} only assume that $m \geq d$. In this overparameterized setting, \citet{mannelli} show that the gradient flow on the population loss converges towards optimum and derives a convergence rate. 
They also show rigorously that the gradient flow on the empirical loss leads to a minimizer with optimal prediction for a number of sample larger than $2d$ for $m^*=1$ (and heuristically $(m^*+1)d$ for $m^*>1$).
Finally, \citet{Garmanik}, only considering sub-Gaussian observations, proves that the minimal value of both the empirical and population loss over rank deficient matrices is bounded away from zero with high probability. In addition, they obtain a generalization bound for the weights optimized on the empirical loss. 

\paragraph{Phase retrieval.} Another topic related to ours is the phase retrieval problem, which corresponds to the simpler case where $m = m^* = 1$. This problem has been extensively studied in the literature \citep[see][for a review]{fienup}. As a result of interest, we mention \citet{biroli}, who exhibit a phase transition for the number of observations per dimension in order to achieve strong recovery. This criterion is obtained in the mean-field limit, where the dimension goes jointly to infinity with the number of observations, with a constant ratio. 

\paragraph{High-dimensional limits.} More generally, several learning properties of shallow neural networks have been obtained through high-dimensional limits. For instance, in the case of a one hidden layer neural network, \citet{berthier} obtain a set of low dimensional equations for the gradient flow dynamics in the mean-field limit, and \citet{arnaboldi} derive similar equations for the SGD algorithm with different scaling between the dimension, the number of neurons and the stepsize. Other works rely on the use of statistical physics methods to derive high-dimensional equations for the learning dynamics of neural networks \citep{gabrie2023neural,gamarnik2022disordered,biroli,Mignacco}. 

\section{Setting}
\subsection{Notations}
For a matrix $A \in \R^{d \times m}$, we denote $A^T \in \R^{m \times d}$ its transpose and $\|A\|_F = [ \tr(AA^T)]^{1/2}$ its Frobenius norm. We denote $\mathcal{S}_d(\R)$, $\mathcal{S}_d^+(\R)$, $\mathcal{S}_d^{++}(\R)$ the spaces of $d \times d$ symmetric, positive semi-definite and positive definite matrices. If $\big(E, \langle \., \. \rangle \big)$ is a Euclidean space and $\Phi : E \xrightarrow[]{} \R$ is twice continuously differentiable, we denote $\nabla L(x) \in E$ its gradient and $\d^2 L_x$ its Hessian at $x$, which is a linear self-adjoint map on $E$. We say that $x \in E$ is a critical point of~$L$ if $\nabla L(x) = 0$, and a local minimizer if in addition, $\langle \d^2L_x(h), h \rangle \geq 0$ for all $h \in E$. 

\subsection{Model} \label{subsec:model}
We consider a teacher-student setup where the input $x \in \R^d$ is randomly generated and fed to a teacher network $u^*$ whose output is learned by a student $u$ with the same structure, with:
\begin{equation} \label{eq:predictors}
\begin{aligned}
u^*(x) &= \frac{1}{m^*} \sum_{j=1}^{m^*} (w_j^* \. x)^2 = \tr \big(xx^T W^*W^{*T} \big), \\
u(x) &=\frac{1}{m} \sum_{j=1}^m (w_j \. x)^2 = \tr \big( xx^T W W^T \big),
\end{aligned}
\end{equation}
where $m$ and $m^*$ are the number of neurons of the student and the teacher networks, $(w_j)_{1 \leq j \leq m} \in \big(\R^d)^m$ and $(w_j^*)_{1 \leq j \leq m^*} \in \big(\R^d)^{m^*}$ are the corresponding weights, and $W$, $W^*$ the associated renormalized matrices: 
\begin{equation} \label{eq:matrixvector}
\begin{aligned}
    W^* &= \frac{1}{\sqrt{m^*}} \big( w_1^* \ \big| \dots \big| \ w_{m^*}^* \big) \in \R^{d \times m^*}, \\
 W &= \frac{1}{\sqrt{m}} \big( w_1 \ \big| \dots \big| \ w_m \big) \in \R^{d \times m}.
\end{aligned}
\end{equation}
This common architecture corresponds to a one hidden layer neural network where the output weights are fixed, with quadratic activation functions. 

The error made by the student network is evaluated using the quadratic cost function. In line with previous work on this specific model \citep[see][]{mannelli,Garmanik}, our study exclusively focuses on the population risk, which is obtained by taking the expectation over the data points distribution, assumed to be Gaussian with zero mean and identity covariance matrix: 
\begin{equation}  \label{eq:loss}
\begin{aligned}
    \mathcal{L}(W) &= \frac{1}{4} \E_x \Big[ \tr \Big ( xx^T (W^*W^{*T} - WW^T) \Big)^2 \Big]\\
&\equiv  \frac{1}{2} \big\|\Delta_W\big\|_F^2 + \frac{1}{4} \big[ \tr (\Delta_W)\big]^2,
\end{aligned}
\end{equation}
with $\Delta_W = WW^T - W^* W^{*T} \in \R^{d \times d}$. As shown by \citet[Theorem 3.1]{Garmanik}, equation \eqref{eq:loss} is verified whenever $x$ has i.i.d.~coordinates drawn from a distribution matching the standard Gaussian distribution (with mean 0 and variance 1)  up to the fourth moment.\footnote{\citet{Garmanik} show that in the general case, an extra term appears in the loss, proportional to $\kappa_4 \| \mathrm{diag}(\Delta_W)\|_2^2$, where $\kappa_4$ is the fourth cumulant of the distribution from which the coefficients of $x$ are drawn (which is zero in the Gaussian case), and $\mathrm{diag}(\Delta_W)$ is the vector of the diagonal elements of $\Delta_W$. This leads to a different flow which is not covered by our results.}

Although the loss $\L$ is non convex, it can be written as $\L(W) = F(WW^T)$ where $F$ is convex. Some results derived in \cref{sec:CV} will show that $\L$ enjoys some properties shared by convex functions. For instance, we show that all its local minimizers are in fact global. Such functions depending only on $WW^T$ have already been studied with various approaches, see \citet{edelman,journee,massart}. 

Optimization on $W$ is performed using the gradient flow, which has the form: 
\begin{equation} \label{eq:flow}
\begin{aligned}
    \dot{W} &= - \nabla \mathcal{L}(W) = -2 \Delta_W W - \tr(\Delta_W)W.
\end{aligned}
\end{equation}
Note that, due to the form of $\L$, we do not expect to exactly retrieve  through optimization the students vectors $w_1^*, \dots, w_{m^*}^*$, but rather to obtain a student matrix $W$ such that $WW^T = W^*W^{*T}$. As shown in equation \eqref{eq:predictors}, such a matrix will lead to an optimal predictor. If $W^*$ is of rank $m^*$ and since $W \in \R^{d \times m}$, this is only possible for $m \geq m^*$. Only in this case, one can hope for a convergence of the flow towards strong recovery. 

In the following, we will suppose that the initialization of the flow $W^0 = W(t=0) \in \R^{d \times m}$ as well as the teacher matrix $W^* \in \R^{d \times m^*}$ are random. When necessary, we will make assumptions about their distributions. In \cref{sec:CV}, we will assume these matrices to be drawn from a distribution which is absolutely continuous with respect to the Lebesgue measure on $\R^{d \times m}$ and $\R^{d \times m^*}$. In \cref{sec:highdim}, we will require the initial students and the teachers to be orthonormal families. In general, we will always assume the independence of the matrices $W^0 = W(t=0)$ and~$W^*$.

We restrict our study to the case $m^* \leq d$. Indeed, whenever $m^* > d$, the matrix $Z^* = W^* W^{*T} \in \mathcal{S}_d^+(\R)$ can be determined using only $d$ vectors, thus the optimization problem will be equivalent to the case $m^* = d$. We will assume $Z^*$ to be of rank $m^*$ (that is the teacher vectors form a linearly independent family of $\R^d$), and denote $\mu_1 \geq \dots \geq \mu_{m^*} > 0$ its non-zero eigenvalues, and $v_1^*, \dots, v_{m^*}^* \in \R^d$ the associated orthonormal eigenvectors, so that: 
\begin{equation*}
    Z^* = \sum_{k=1}^{m^*} \mu_k v^*_k(v^*_k)^T.
\end{equation*}
\vspace*{-0.1 cm}
\section{Self-Consistent Solution of the Flow} \label{sec:dynamics}
In this section, we present a first result for the gradient flow dynamics defined in equation \eqref{eq:flow}. This flow is non-linear and associated with a non-convex loss, thus we cannot rely on general results to understand its dynamics. To the exception of \citet{mannelli} who gave an interpretation of the solution of the flow using a stochastic differential equation, and derived convergence properties in the over-parameterized case $m \geq d$, the dynamical properties of the flow in the general case have not been studied before.  

However, a similar problem, the Oja's flow, was treated by \citet{oja}. It corresponds to the gradient flow associated with the loss $\mathcal{L}^{\rm oja}(W) = \frac{1}{2} \| \Delta_W \|_F^2$ (where the second term in equation \eqref{eq:loss} is removed), leading to the differential equation $\dot{W} = - 2 \Delta_W W$. They show that this flow admits a closed-form solution and convergence properties can be deduced from their formula. 

In the following proposition, we obtain a solution of the flow very similar to the one of the Oja's flow. As done by \citet{oja}, this solution is expressed in terms of the matrix $Z(t) = W(t)W(t)^T$. However, in our case the solution also depends on a function $\psi$ which is solution of an implicit equation.

\begin{proposition} \label{Prop:Analytic}
Let $W(t)$ be solution of the flow in equation \eqref{eq:flow} with initial condition $W^0 \in \R^{d \times m}$. Let $Z(t) = W(t)W(t)^T$. Then, for all $t \geq 0$: 
    \begin{equation} \label{eq:sol}
   \begin{aligned}
        Z(t) = M^*(t) \left( I_m \!+\! 4 \int_0^t\! M^*(s)^T M^*(s) \d s \right)^{-1} \!\! M^*(t)^T,
    \end{aligned}
    \end{equation}
with: 
\begin{equation*}
	M^*(t) = e^{-\psi(t)} e^{t \tr(Z^*)} e^{2Z^*t} W^0 \in \R^{d \times m},
\end{equation*}
 and $\psi(t) = \int_0^t \tr \big(W(s)W(s)^T \big) \, \d s$, solution of the equation: 
\begin{equation} \label{eq:selfpsi}
        \psi(t) = \frac{1}{4} \tr \log \left( I_m + 4  \int_0^tM^*(s)^T M^*(s) \d s \right),
    \end{equation}
 where the $\log$ is understood as the spectral map on $\mathcal{S}_m^{++}(\R)$. 
\end{proposition}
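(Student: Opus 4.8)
The plan is to pass to the positive semidefinite matrix $Z(t) = W(t)W(t)^T$, absorb the ``linear'' part of the dynamics (the terms carrying $Z^*$ and $\tr(Z^*)$) into the explicit prefactor $M^*(t)$, and recognize that what remains is a matrix Riccati equation whose closed form \eqref{eq:sol} can be checked by inspection; this is the same mechanism that makes Oja's flow solvable, with $Z^*$ and an extra scalar drift in place of the linear term. The one genuinely delicate point is the apparent circularity of the statement: $M^*$ is built from $\psi$, and $\psi$ is defined through the trajectory $Z$ itself. I would resolve this by fixing the (unique, global) solution $W(\cdot)$ of \eqref{eq:flow} once and for all, treating $\psi(t) = \int_0^t \tr\big(W(s)W(s)^T\big)\,\d s$ as a \emph{known} $C^1$ scalar function, proving \eqref{eq:sol} by a uniqueness argument in which $\psi$ enters only as a fixed coefficient, and then reading off \eqref{eq:selfpsi} as an a posteriori identity.

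First I would note that \eqref{eq:flow} has a solution defined for all $t\ge 0$: its right-hand side is polynomial, hence locally Lipschitz, so a maximal solution exists, and since $\L$ is nonincreasing along the flow while $\L(W)\ge \tfrac12\|WW^T - Z^*\|_F^2$ is coercive in $\|W\|_F$, the trajectory stays in a bounded sublevel set and cannot blow up. Differentiating $Z=WW^T$ along \eqref{eq:flow} and using $\Delta_W=\Delta_W^T$ gives the closed equation
\[
\dot Z \;=\; -2\Delta_W Z - 2 Z\Delta_W - 2\tr(\Delta_W)\,Z \;=\; -4Z^2 + 2Z^*Z + 2ZZ^* + 2\big(\tr(Z^*)-\tr(Z)\big)\,Z .
\]
Introducing $B(t) = 2Z^* + \big(\tr(Z^*)-\dot\psi(t)\big)I_d$, which is symmetric and continuous in $t$ since $\dot\psi = \tr(Z)$, this reads $\dot Z = B(t)Z + Z\,B(t) - 4Z^2$.

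Next I would verify that the explicit matrix $\widehat Z(t) := M^*(t)\,K(t)^{-1}M^*(t)^T$, with $K(t) = I_m + 4\int_0^t M^*(s)^TM^*(s)\,\d s\succeq I_m$, solves the \emph{same} Riccati equation with the \emph{same} initial datum. Because the scalar factors $e^{-\psi(t)}$, $e^{t\tr(Z^*)}$ and the matrix factor $e^{2Z^*t}$ all commute, $\dot M^* = B(t)M^*$; combining this with $\dot K = 4M^{*T}M^*$ and $\tfrac{\d}{\d t}K^{-1} = -K^{-1}\dot K\,K^{-1}$, the three-term product rule for $\widehat Z$ collapses — the middle term cancels against part of the $4\widehat Z^2$ contribution — to $\dot{\widehat Z} = B\widehat Z + \widehat Z B - 4\widehat Z^2$; moreover $\widehat Z(0) = W^0(W^0)^T = Z(0)$ since $M^*(0)=W^0$ and $K(0)=I_m$. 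Both $Z$ and $\widehat Z$ are defined on all of $[0,\infty)$, and the Riccati right-hand side is locally Lipschitz in the matrix argument and continuous in $t$, so Cauchy--Lipschitz yields $Z\equiv\widehat Z$, which is exactly \eqref{eq:sol}.

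Finally, \eqref{eq:selfpsi} follows as a corollary. Using $\dot K = 4M^{*T}M^*$, the cyclic invariance of the trace, and Jacobi's formula $\tfrac{\d}{\d s}\tr\log K(s) = \tr\big(K(s)^{-1}\dot K(s)\big)$, one gets
\[
\tr\big(Z(s)\big) \;=\; \tr\big(K(s)^{-1}M^*(s)^TM^*(s)\big) \;=\; \tfrac14\tr\big(K(s)^{-1}\dot K(s)\big) \;=\; \tfrac14\,\frac{\d}{\d s}\tr\log K(s),
\]
and integrating from $0$ to $t$ with $K(0)=I_m$ gives $\psi(t)=\int_0^t\tr(Z(s))\,\d s = \tfrac14\tr\log K(t)$. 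I expect the computations themselves to be short; the main obstacle is the bookkeeping described above — making sure the self-referential appearance of $\psi$ inside $M^*$ is handled without a logical loop — together with routine care in differentiating the commuting exponential prefactors and the matrix inverse $K^{-1}$.
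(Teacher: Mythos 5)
Your proposal is correct and takes essentially the same approach as the paper: derive the Riccati ODE for $Z=WW^T$, differentiate the candidate formula $M^*K^{-1}M^{*T}$ (with $K=I_m+4\int_0^t M^{*T}M^*$, and $\dot M^*=B(t)M^*$ because the scalar and matrix-exponential prefactors commute) to see it satisfies the same ODE with the same initial condition, and then read off the $\psi$-equation from Jacobi's formula $\tfrac{\d}{\d t}\tr\log K=\tr(K^{-1}\dot K)$. You make explicit two things the paper leaves implicit—the uniqueness step (Cauchy--Lipschitz on the Riccati equation) and the resolution of the apparent circularity between $\psi$ and $M^*$ by treating $\psi$ as a fixed scalar coefficient read off the already-existing trajectory—and you correctly write the inner matrix as $M^*(s)^TM^*(s)$ (the paper's $U(t)$ has a transpose typo); these are improvements in exposition, not a different route.
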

\begin{proof}
    To derive equation \eqref{eq:sol}, note that if $W(t) \in \R^{d \times m} $ is solution of equation \eqref{eq:flow}, then $Z(t) = W(t)W(t)^T \in \mathcal{S}_d^+(\R)$ is solution of: 
    \begin{equation*}
        \dot{Z} = 2 \tr(Z^*-Z)Z + 2Z^*Z + 2ZZ^* - 4Z^2.
    \end{equation*}
    The result is  obtained by deriving equation \eqref{eq:sol} along with the definition of $\psi$. For equation \eqref{eq:selfpsi}, we have: 
    \begin{equation*}
        \dot{\psi}(t) = \tr[Z(t)] = \frac{1}{4} \tr \big[ \dot{U}(t) U(t)^{-1} \big],
    \end{equation*}
    with $U(t) = I_m + 4 \int_0^t M^*(s) M^*(s)^T \d s$. Since the map $X \mapsto \tr \log X$ has gradient $X^{-1}$ on $\mathcal{S}_m^{++}(\R)$, this leads to the result by integrating the previous equation. 
\end{proof}

This proposition gives an implicit formula for the solution of the flow $W(t)$, but in terms of the matrix $Z(t) = W(t)W(t)^T$. Obtaining an understanding of $W(t)$ in itself is not needed: as mentioned in subsection \ref{subsec:model}, the signal recovery is achieved whenever $WW^T = W^* W^{*T}$. 

The first consequence of formula \eqref{eq:sol} is that if $W^0$ is of full rank, i.e., $\mathrm{rank}(W^0) = \min(m,d)$, then $W(t)$ stays of full rank along the flow. Thus, due to the lower semicontinuity of the rank, $W_\infty = \lim_{t \to \infty}W(t)$ will be of rank $\leq \min(m,d)$, provided that the limit exists. As a consequence, we cannot hope for a full signal recovery (i.e., $Z(t) \xrightarrow[t \to \infty]{} Z^*$) whenever $m < m^*$. 

Equation \eqref{eq:selfpsi} defines an implicit equation on~$\psi$. This scalar function is the only unknown of the solution obtained in equation \eqref{eq:sol}, therefore, gathering information about its behaviour will be useful to understand the properties of the flow. However, equation \eqref{eq:selfpsi} cannot be solved in closed form in general cases, but it can be analyzed in the high-dimensional limit and solved numerically. 
In \cref{sec:CV}, we first obtain general results on the convergence properties at long times by directly studying the local minimizers of the loss function. We then work out the high-dimensional limit in \cref{sec:highdim}.

\section{Convergence of the Gradient Flow} \label{sec:CV}
We now determine the convergence properties of the gradient flow defined in equation \eqref{eq:flow}. In \cref{PropCV}, we derive the limit of the solution $W(t)$ of the flow as $t \to \infty$. We also mention the rate associated with this convergence, a result we detail in \cref{App:CVRate}. 

The first result makes use of the stable manifold theorem \citep[see][]{SMT}, which states that, under the following assumption, the gradient flow almost surely converges towards a local minimizer of the loss function. 

\begin{assumption} \label{AssumptionInit}
The initialization of the flow $W^0$ is drawn with a distribution which is absolutely continuous with respect to the Lebesgue measure on $\R^{d \times m}$. 
\end{assumption}

\begin{proposition} \label{PropCV}
Let $W(t)$ be solution of the flow \eqref{eq:flow} with initial condition $W^0$. Then, under Assumption \ref{AssumptionInit}, with probability one, $W_\infty = \underset{t \to \infty}{\lim} W(t)$ exists and verifies: 
\begin{itemize}
\item if $m \geq m^*$: $W_\infty W_\infty^T = Z^*$,
\item if $m < m^*$ and the non-zero eigenvalues of $Z^*$ are simple (i.e., $\mu_1 > \dots > \mu_{m^*} > 0$):
\begin{equation*}
 W_\infty W_\infty^T = \sum_{k = 1}^m (\mu_k + \tau) v_k^* (v_k^*)^T,
\end{equation*}
where $\tau$ is defined as:
\begin{equation} \label{eq:tau}
\tau = \frac{1}{m+2} \sum_{k=m+1}^{m^*} \mu_k.
\end{equation}
\end{itemize}
\end{proposition}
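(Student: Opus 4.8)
The plan is to prove that the flow converges to \emph{some} critical point by soft arguments, then use the stable manifold theorem to reduce to local minimizers, and finally classify the critical points of $\mathcal{L}$ explicitly to identify the only local minimizers as the two asserted matrices. (One could instead try to extract convergence from the implicit solution of \cref{Prop:Analytic}, but this seems harder than the soft route since it requires controlling the implicitly defined $\psi$.)

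First I would note that $\mathcal{L}(W)\to+\infty$ as $\|W\|_F\to\infty$, since $\|\Delta_W\|_F\to\infty$, while $t\mapsto\mathcal{L}(W(t))$ is non-increasing along the flow; hence the trajectory stays in the compact sublevel set $\{\mathcal{L}\le\mathcal{L}(W^0)\}$, is defined for all $t\ge0$, and all its accumulation points are critical points of $\mathcal{L}$. Because $\mathcal{L}$ is a polynomial, the Łojasiewicz gradient inequality applies and the bounded trajectory has finite length, so $W_\infty=\lim_{t\to\infty}W(t)$ exists and $\nabla\mathcal{L}(W_\infty)=0$. Next, under Assumption~\ref{AssumptionInit} and the stable manifold theorem \citep[see][]{SMT}, the set of initializations from which the gradient flow converges to a critical point that is a strict saddle, i.e., one whose Hessian $\d^2\mathcal{L}$ has a negative eigenvalue, has Lebesgue measure zero (the critical locus being a finite union of compact submanifolds, covered by countably many charts). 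It therefore suffices to show that (i) the asserted matrices are local minimizers and (ii) every other critical point is a strict saddle.

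For the classification, multiplying the critical equation $(2\Delta_W+\tr(\Delta_W)I)W=0$ on the right by $W^T$ gives $AZ=0$ with $Z=WW^T$ and $A=2(Z-Z^*)+\tr(Z-Z^*)I$, and conversely $AWW^T=0$ implies $AW=0$. Since $A,Z$ are symmetric and $AZ=ZA=0$, they commute, hence $Z$ commutes with $Z^*$ and is diagonal in an orthonormal eigenbasis $v_1,\dots,v_d$ extending $v_1^*,\dots,v_{m^*}^*$; write $Z=\sum_i\lambda_iv_iv_i^T$, let $\mu_i'$ be the corresponding eigenvalue of $Z^*$ (so $\mu_k'=\mu_k$ for $k\le m^*$, $\mu_i'=0$ otherwise), and set $S=\{i:\lambda_i>0\}$, so $|S|\le m$ because $\mathrm{rank}(Z)\le m$. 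Then $AZ=0$ forces $\lambda_i=\mu_i'-\tfrac12\tr(\Delta_W)$ for $i\in S$, and summing over $S$ yields $\tr(\Delta_W)=-2\tau_S$ with $\tau_S=\tfrac1{|S|+2}\big(\tr(Z^*)-\sum_{i\in S}\mu_i'\big)$, whence every critical point satisfies $Z=Z_S:=\sum_{i\in S}(\mu_i'+\tau_S)v_iv_i^T$ for some admissible $S$ (one with $\mu_i'+\tau_S>0$ on $S$). Taking $S=\{1,\dots,\min(m,m^*)\}$ gives $\tau_S=0$ and $Z_S=Z^*$ when $m\ge m^*$, and $\tau_S=\tau$ and $Z_S=\sum_{k=1}^m(\mu_k+\tau)v_k^*(v_k^*)^T$ when $m<m^*$, exactly the asserted limits.

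Finally, for the second-order analysis write $\mathcal{L}(W)=F(WW^T)$ with $F$ convex and $\nabla F(Z)=\tfrac12A$, so that at a critical point $\langle\d^2\mathcal{L}_W(H),H\rangle=\langle\nabla^2F(Z)\dot Z,\dot Z\rangle+\langle A,HH^T\rangle$ with $\dot Z=HW^T+WH^T$, where $\langle\nabla^2F(Z)\dot Z,\dot Z\rangle=\|\dot Z\|_F^2+\tfrac12(\tr\dot Z)^2$ and $\langle A,HH^T\rangle=\sum_kh_k^TAh_k$. When $m\ge m^*$ the candidate is the global minimizer ($\mathcal{L}=0$), hence trivially a local one; when $m<m^*$ one checks $\langle\d^2\mathcal{L}_W(H),H\rangle\ge0$ directly: at $Z=Z_S$ with $S=\{1,\dots,m\}$, $A$ vanishes on $\mathrm{span}(v_1,\dots,v_m)$ and is negative semidefinite on its orthogonal complement, so decomposing each column $h_k=a_k+b_k$ along this splitting gives $\sum_kh_k^TAh_k=\sum_kb_k^TAb_k\ge-(2\mu_{m+1}+2\tau)\sum_k\|b_k\|^2$ while $\|\dot Z\|_F^2\ge2\min_{i\in S}\lambda_i\sum_k\|b_k\|^2=2(\mu_m+\tau)\sum_k\|b_k\|^2$, so the Hessian is $\ge2(\mu_m-\mu_{m+1})\sum_k\|b_k\|^2\ge0$; this is where simplicity of the spectrum of $Z^*$ is essential, the worst direction $H=v_{m+1}e_m^T$ contributing $\mu_m-\mu_{m+1}>0$ rather than $0$. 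Conversely, for any other admissible $S$ there is a strict descent direction — transferring mass from a used eigendirection $v_{i_0}$, $i_0\in S$, to an unused one $v_j$, $j\notin S$, with $\mu_j'>\mu_{i_0}'$ (possible since such $S$ is not an initial segment), or, if $|S|<m$, growing a zero column of $W$ along an eigendirection of $Z^*$ not in $S$; the corresponding rank-one $H$ makes $\langle\d^2\mathcal{L}_W(H),H\rangle$ equal $2(\mu_{i_0}'-\mu_j')<0$ (resp.\ $-2(\mu_j'+\tau_S)<0$), so these are strict saddles. Combined with the first two paragraphs this gives the proposition, and the asserted $W_\infty W_\infty^T$ is the unique critical value of $Z$ that is a local minimum. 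The main obstacle is precisely this last step carried out uniformly: one must treat the families of critical points whose support meets $\ker Z^*$ and those with repeated $\mu_i'$, and in each non-optimal case verify that the descent direction genuinely survives the convex penalty $\langle\nabla^2F(Z)\dot Z,\dot Z\rangle$, while checking that the candidate is a bona fide local minimizer rather than merely a critical point lacking an obvious descent direction — which is exactly where the simplicity hypothesis enters when $m<m^*$.
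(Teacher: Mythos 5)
Your argument follows essentially the same route as the paper: reduce, via the stable manifold theorem, to classifying critical points that satisfy the second-order necessary condition, characterize the first-order critical set as $Z_S=\sum_{i\in S}(\mu_i'+\tau_S)v_iv_i^T$, and then use second-order test directions to rule out every $S$ except the initial segment. The main execution differences are minor: you spell out the existence of $W_\infty$ via coercivity plus the \L{}ojasiewicz gradient inequality (the paper invokes analyticity implicitly), and you additionally verify that the claimed critical point really is a local minimizer (positive semidefinite Hessian via the $\mu_m-\mu_{m+1}$ margin), which the paper does not need because the flow is already guaranteed to converge to a second-order critical point and there is only one candidate. Conversely, the paper handles the rank-deficient case through a structural shortcut (Lemma~\ref{lemma:rankdeficient}: for $L(W)=F(WW^T)$ with $F$ convex, a rank-deficient local minimizer is a global minimizer of $F$ on $\mathcal{S}_d^+(\R)$), which is cleaner and more reusable than your direct construction of a descent direction by growing a zero column of $W$; your construction works, but you should be slightly careful that when $|S|<m$ and $S\supseteq\{1,\dots,m^*\}$ no unused eigendirection $v_j$ with $\mu_j'>0$ exists — as you implicitly note, the admissibility constraint forces $\tau_S>0$ whenever $i\in S$ with $i>m^*$, so this case collapses to $S=\{1,\dots,m^*\}$, which is the global optimum. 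Both proofs share the same unaddressed technical subtlety in applying the stable manifold theorem to a non-isolated critical locus, so this is not a gap specific to your attempt. Overall the proposal is correct and equivalent in substance to the paper's proof, with a slightly more self-contained treatment of convergence and a slightly less elegant treatment of the rank-deficient branch.
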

This result ensures a strong recovery of the signal by the gradient flow when $m \geq m^*$, which is the minimal overparameterization one needs. Indeed, as mentioned earlier, perfect recovery is impossible when $m < m^*$, due to rank constraints. Instead we recover only the largest eigenvectors. 

We prove this proposition in \cref{ProofCV} using that, under Assumption \ref{AssumptionInit}, one only needs to determine the local minimizers associated with the loss, that is solving the equations: 
\begin{equation*}
    \nabla \L(W) = 0, \hspace{1 cm} \tr \big( \d^2 \L_W(K)K^T \big) \geq 0,
\end{equation*}
for all $K \in \R^{d \times m}$, which is much easier than studying the full flow using the solution in \cref{Prop:Analytic}. The proof is done in two steps: we first characterize the critical points of the loss, i.e., the matrices $W \in \R^{d \times m}$ verifying $\nabla \L(W) = 0$. Then, we select those satisfying the second optimality condition. 

In the case $m < m^*$, the assumption on the simplicity of the eigenvalues of $Z^*$ is not mandatory, but it allows to obtain a single local minimizer up to the representation $W \longmapsto WW^T$. For instance, if all of the non-zero eigenvalues of $Z^*$ are equal to some $\mu > 0$, which happens for instance when the teachers are orthogonal, then for each subset $I \subset \llbracket 1, m^* \rrbracket$ of size $m$, the matrix: 
\begin{equation*}
    \sum_{k \in I} (\mu + \tau) v_k^* v_k^{*T},
\end{equation*}
corresponds to a local minimizer of the loss. Overall, each choice of subset $I$ leads to the same value of the loss, which allows to conclude that every local minimizer of $\L$ is global. For $m \geq m^*$, they are optimal and achieve zero error on the loss. 

Another natural question regarding the long time behaviour of the gradient flow is the understanding of the convergence rate. \cref{PropCVRate} in \cref{App:CVRate} derives them in the overparameterized case $m \geq m^*$, improving the results obtained by \citet{mannelli}. Some numerical experiments, also featured in \cref{App:CVRate} prove our bounds to be tight. 

\section{High-Dimensional Limit} \label{sec:highdim}
We now investigate the high dimensional limit of the flow defined in equation \eqref{eq:flow}. Originally used in statistical physics and mean-field theories where the dimension accounts for the number of degrees of freedom (going to infinity in the thermodynamic limit), high dimensional limits have been widely applied to learning and optimization problems  \citep[see][]{gabrie2023neural,gamarnik2022disordered}. They allow to obtain equations where the main objects (called order parameters in physics) are finite-dimensional and satisfy self-consistent equations. In some cases, one also finds that the relevant random quantities concentrate, thus leading to a deterministic description in the $d\rightarrow \infty$ limit, only depending on the distribution of the randomness associated with the initialization or the signal. We show below that this is indeed what happens for the flow by taking the limit $m,m^*,d \to \infty$ with an appropriate scaling.

In this section, the main goals are to (i) identify the limiting deterministic dynamical equations describing the $d\rightarrow \infty$ limit, (ii) determine the timescale $t_d$ at which convergence occurs, and (iii) characterize the behavior of a scalar quantity accounting for the signal retrieval, which is the overlap between the students and teachers: 
    \begin{equation} \label{eq:overlap}
        \chi(Z,Z^*) = \frac{\big| \tr(ZZ^*) \big|}{\big\|Z\big\|_F \big\|Z^*\big\|_F} \in [0,1],
    \end{equation}
with $Z = WW^T$. This quantity depends on time and dimension, and the first challenge is to derive the relevant scaling between those two parameters.

\subsection{Sampling Assumptions} \label{subsec:sampling}
In all the following, we make the assumption that the families $(w_1^0,  \dots, w_m^0)$ and $(w_1^*, \dots, w_{m^*}^*)$ are orthonormal and drawn independently. We also assume that they are uniformly drawn, which can be achieved by considering the uniform measure on the Stiefel manifold \citep[see][]{gotze2023higher}. From now on, the dependency in the dimension is made explicit when relevant. Thus, defining $U_d^0 = \sqrt{m_d} \, W_d^0$ and $U_d^* = \sqrt{m_d^*} \ W_d^*$ (the initialization $W_d^0$ and the teacher matrix $W_d^*$ are linked to their respective vectors in equation \eqref{eq:matrixvector}), we assume that:
\vspace{0.2cm}
\begin{equation*}
\vspace{0.2cm}
    U_d^{0T} U_d^0 = I_{m_d}, \hspace{1 cm} U_d^{*T} U_d^* = I_{m_d^*}.
\end{equation*}
The orthonormality assumption will help simplify the computations, as the relevant quantities that will be studied in the high dimensional limit will only depend on the matrix $Y_d = U_d^{0T} U_d^* U_d^{*T} U_d^0 \in \R^{m_d \times m_d}$. Indeed, we will make use of the implicit solution for the flow obtained in equation \eqref{eq:sol}, which depends on the matrix $W_d^{0T} \exp \big(4 W_d^* W_d^{*T} t \big) W_d^0$. Under our orthonormality assumption, we have the more convenient formula: 
\vspace{0.1cm}
\begin{equation} \label{eq:exponentialorthonormal}
\vspace{0.1cm}
    U_d^{0T} \exp \big( \lambda U_d^* U_d^{*T} \big) U_d^0 = I_{m_d} + \big( e^{\lambda} - 1 \big) Y_d,
\end{equation}
which is specific to our choice of distribution. Moreover, the limit distribution for the eigenvalues of the random matrix $Y_d$ is well understood \citep[see][]{manova,aubrun,hiai}. 

For obvious dimensional reasons, we necessarily have $m_d, m_d^* \leq d$. As $d \to \infty$, we will assume that $m_d$ and $m_d^*$ diverge but keep a fixed ratio with $d$: 
\begin{equation*}
    \alpha = \lim_{d \to \infty} \frac{m_d}{d} \in \ ]0, 1], \hspace{1 cm} \alpha^* = \lim_{d \to \infty} \frac{m_d^*}{d} \in \ ]0,1]. 
\end{equation*}
Thus, we let the number of teachers and students to vary with the dimension, leading to predictors parameterized by a continuum of neurons. 

In this setting, the matrix $Y_d$ has a convergent empirical spectral distribution, namely there exists a probability measure $\mu$ (depending on $\alpha$, $\alpha^*$) supported on $[0,1]$, such that for any continuous $f : \R \xrightarrow[]{} \R$:
\begin{equation*}
    \frac{1}{m_d} \tr \big( f(Y_d) \big) \xrightarrow[d \to \infty]{} \int f(x) \d \mu(x).
\end{equation*}
This convergence result is presented by \citet[Theorem 1.7]{manova}, and the distribution $\mu$ is known as a limiting distribution in the $\beta-$Jacobi ensemble \citep[see][]{jiang,collins}: 
\begin{equation} \label{eq:manova}
\begin{aligned}
    &\hspace*{-.15cm} \d \mu(x) = \frac{\sqrt{(r_+-x)(x-r_-)}}{2 \pi \alpha x(1-x)} \mathbb{1}_{[r_-, r_+]}(x) \d x  \\
    &\hspace*{.5cm} + \Big(1 - \frac{\alpha^*}{\alpha} \Big)^+ \delta_0(x) + \frac{1}{\alpha} \big( \alpha + \alpha^* - 1 \big)^+ \delta_1(x),
\end{aligned}
\end{equation}
where $u^+ = \max(u, 0)$ and: 
\begin{equation*}
    r_\pm = \left( \sqrt{\alpha(1-\alpha^*)} \pm \sqrt{\alpha^*(1-\alpha)} \right)^2.
\end{equation*}
In \cref{fig:PDF}, we compare the probability density function of $\mu$ (when the coefficients associated with the Dirac measures are zero) and its empirical counterpart when drawing the eigenvalues of $Y_d$ in finite dimension.
\begin{figure} [ht]
    \centering
    \includegraphics[width=\linewidth]{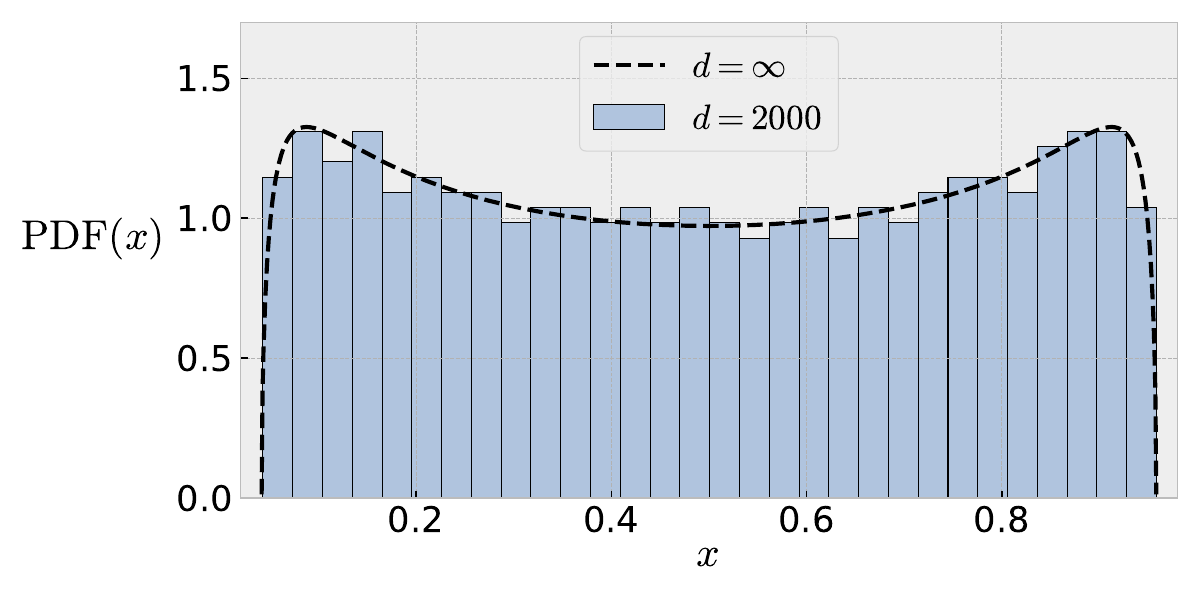}
    \caption{Probability density function of $\mu$ defined in equation \eqref{eq:manova} (dashed line) and histogram of the eigenvalues of $Y_d$ for $d = 2000$ with $30$ bins, with $\alpha = 0.3$, $\alpha^* = 0.5$. }
    \label{fig:PDF}
\end{figure}
This convergence result is a key ingredient to characterize the high-dimensional limit. In fact, as shown in \cref{sec:dynamics}, the solution of the flow derived in equation \eqref{eq:sol} mainly relies on the unknown function $\psi_d$, which is solution of the implicit equation \eqref{eq:selfpsi}. Our first result identifies the high-dimensional limit of $\psi_d$ defined in \cref{Prop:Analytic}.

\subsection{High-Dimensional Limit of the Flow}
In order to investigate signal recovery in the high-dimensional limit, we first determine the limit equations solved by $\psi_d$. The convergence will be shown to happen at timescale $t \sim d$, we therefore allow time to vary with dimension by setting $t_d = \gamma d$, where $\gamma$ is fixed and accounts for a renormalized time variable. We also set:
\begin{equation} \label{eq:phi}
\begin{aligned}
    \phi_d(\gamma) &= \int_0^{\gamma d} \tr \big( W_d(s) W_d(s)^T - W_d^* W_d^{*T} \big) \d s \\ 
    &= \psi_d(\gamma d) - \gamma d,
\end{aligned}
\end{equation}
since under our assumptions, we have $\tr(Z_d^*) = 1$. Note that we cannot hope for a high-dimensional limit for $\psi_d$ in itself: in the case of a signal recovery at finite dimension, one can expect $\psi_d(\gamma d)$ to be of the order~$d$, whereas $\phi_d(\gamma)$ should remain of order 1. That is indeed what we show in the following proposition.  

\begin{proposition} \label{PropPhi}
    Let $W_d(t)$ be solution of the gradient flow in equation \eqref{eq:flow} with initial condition $W_d^0$. Then, with probability one, as $d \to \infty$, $\phi_d$ uniformly converges on any compact of $\R_+$ towards a function $\phi$ which is solution of the equation:
    \begin{equation} \label{eq:selfphi}
    \begin{aligned}
        4\gamma &= \alpha \log F_\phi(\gamma)
        + \big(\alpha + \alpha^* - 1 \big)^+ \log \left( \frac{G_\phi(\gamma)}{F_\phi(\gamma)} \right) \\
        &\hspace{3.2cm}+ \Theta \left( \frac{G_\phi(\gamma)}{F_\phi(\gamma)} - 1 \right),
    \end{aligned}
    \end{equation}
    with:
    \vspace{-0.2cm}
    \begin{equation*} 
    \begin{aligned}
        F_\phi(\gamma) &= 1 + \frac{4}{\alpha} \int_0^\gamma e^{-2 \phi(s)} \d s, \\
        G_\phi(\gamma) &= 1 + \frac{4}{\alpha} \int_0^\gamma e^{-2 \phi(s)} e^{4s / \alpha^*} \d s, \\
        \Theta(u) = \frac{1}{2\pi} \int_{r_-}^{r_+} &\frac{\sqrt{(r_+-x)(x-r_-)}}{x(1-x)} \log(1+ ux) \d x.
    \end{aligned}
    \end{equation*}
\end{proposition}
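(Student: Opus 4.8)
The plan is to turn the implicit scalar equation \eqref{eq:selfpsi} for $\psi_d$ into an implicit equation for $\phi_d$ which, after rescaling by $d$, passes to a deterministic limit governed by the Jacobi law $\mu$ of \eqref{eq:manova}. \emph{First,} I would rewrite \eqref{eq:selfpsi} using the orthonormality. Since $\tr(Z_d^*)=1$, one has $M^*(s)^TM^*(s)=e^{-2\psi_d(s)}e^{2s}\,W_d^{0T}e^{4Z_d^*s}W_d^0$, and since $Z_d^*=\tfrac1{m_d^*}U_d^*U_d^{*T}$, formula \eqref{eq:exponentialorthonormal} with $\lambda=4s/m_d^*$ gives $W_d^{0T}e^{4Z_d^*s}W_d^0=\tfrac1{m_d}\big(I_{m_d}+(e^{4s/m_d^*}-1)Y_d\big)$. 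Putting $t=\gamma d$, substituting $s=ud$, and using $\psi_d(ud)=\phi_d(u)+ud$ to obtain $e^{-2\psi_d(ud)}e^{2ud}=e^{-2\phi_d(u)}$, equation \eqref{eq:selfpsi} becomes the exact identity
\[
\gamma d+\phi_d(\gamma)=\tfrac{m_d}{4}\log\big(1+a_d(\gamma)\big)+\tfrac14\,\tr\log\big(I_{m_d}+c_d(\gamma)\,Y_d\big),
\]
where $a_d(\gamma)=\tfrac{4d}{m_d}\int_0^\gamma e^{-2\phi_d(u)}\,\d u$, $b_d(\gamma)=\tfrac{4d}{m_d}\int_0^\gamma e^{-2\phi_d(u)}e^{4ud/m_d^*}\,\d u$ and $c_d(\gamma)=\tfrac{b_d(\gamma)-a_d(\gamma)}{1+a_d(\gamma)}$; this uses $I_{m_d}+4\int_0^{\gamma d}M^*(s)^TM^*(s)\,\d s=(1+a_d(\gamma))I_{m_d}+(b_d(\gamma)-a_d(\gamma))Y_d$ and factors out $(1+a_d(\gamma))$ before the $\tr\log$.

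\emph{Second,} I would establish a priori control making $\{\phi_d\}$ precompact on compacts. The orthonormal scaling forces $\tr(\Delta_{W_d^0})=0$ and $\|\Delta_{W_d^0}\|_F^2\le\tfrac1{m_d}+\tfrac1{m_d^*}=O(1/d)$, hence $\mathcal L(W_d^0)=O(1/d)$; since $\mathcal L$ is non-increasing along the flow and $\mathcal L\ge\tfrac12\|\Delta_W\|_F^2$, both $\|\Delta_{W_d}(t)\|_F$ and $\|Z_d(t)\|_F$ are $O(1/\sqrt d)$ uniformly in $t$. Setting $\delta_d=\tr Z_d-1=\tr\Delta_{W_d}$, the equation for $\dot Z_d$ in \cref{Prop:Analytic} gives $\dot\delta_d=-2\delta_d-2\delta_d^2-4\tr(Z_d\Delta_{W_d})$ with $\delta_d(0)=0$ and forcing term $O(1/d)$, so a Grönwall estimate yields $\sup_t|\delta_d(t)|=O(1/d)$ and therefore $|\dot\phi_d(\gamma)|=d\,|\delta_d(\gamma d)|=O(1)$. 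Thus $\{\phi_d\}$ is uniformly Lipschitz and, since $\phi_d(0)=0$, uniformly bounded on compacts; by Arzelà--Ascoli, along subsequences $\phi_d\to\phi$ uniformly on compacts, and then (using also $e^{4ud/m_d^*}\to e^{4u/\alpha^*}$ and $m_d/d\to\alpha$, uniformly on compacts) $a_d(\gamma)\to F_\phi(\gamma)-1$, $b_d(\gamma)\to G_\phi(\gamma)-1$ and $c_d(\gamma)\to c:=\tfrac{G_\phi(\gamma)}{F_\phi(\gamma)}-1>-1$.

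\emph{Third,} I would divide the identity of Step 1 by $d$ and let $d\to\infty$: $\phi_d(\gamma)/d\to0$, $\tfrac{m_d}{4d}\log(1+a_d(\gamma))\to\tfrac\alpha4\log F_\phi(\gamma)$, and $\tfrac1{4d}\tr\log(I_{m_d}+c_dY_d)=\tfrac{m_d}{4d}\cdot\tfrac1{m_d}\tr\log(I_{m_d}+c_dY_d)\to\tfrac\alpha4\int_{[0,1]}\log(1+cx)\,\d\mu(x)$ by the spectral convergence of \citet[Theorem 1.7]{manova} applied to $x\mapsto\log(1+c_dx)$ (continuous on $[0,1]$, with $c_d\to c$). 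Splitting $\mu$ as in \eqref{eq:manova}, the atom at $0$ contributes $0$, the atom at $1$ of mass $\tfrac1\alpha(\alpha+\alpha^*-1)^+$ contributes $(\alpha+\alpha^*-1)^+\log\tfrac{G_\phi(\gamma)}{F_\phi(\gamma)}$, and the absolutely continuous part contributes $\Theta(c)=\Theta\big(\tfrac{G_\phi(\gamma)}{F_\phi(\gamma)}-1\big)$ by the very definition of $\Theta$; multiplying by $4$ gives \eqref{eq:selfphi}. To promote subsequential convergence to convergence of the whole sequence, I would differentiate \eqref{eq:selfphi} in $\gamma$ into an ODE for $\phi$ with locally Lipschitz right-hand side ($\Theta$ is smooth on $(-1,\infty)$ and $G_\phi/F_\phi>0$) and initial value $\phi(0)=0$: Cauchy--Lipschitz gives a unique global solution, and since every subsequential limit solves it, $\phi_d\to\phi$. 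All of this runs on the almost sure event on which the empirical spectral distribution of $Y_d$ converges, which yields the ``with probability one'' conclusion.

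\emph{Main obstacle.} The delicate point is the a priori control of Step 2: the closed-form \eqref{eq:sol} only yields exponential-in-$t$ bounds on $\tr Z_d$, which are useless at the scale $t=\gamma d$. What saves the argument is the closed scalar ODE for $\delta_d=\tr Z_d-1$ together with the observation that this orthonormal initialization has loss $O(1/d)$: this forces $\delta_d=O(1/d)$ and hence genuine Lipschitz control of $\phi_d$ at timescale $d$. Once this is secured, the algebraic reduction of Step 1, the $\beta$-Jacobi moment computation of Step 3, and the uniqueness are routine, modulo careful bookkeeping of uniformity on compacts and of the single exceptional null set.
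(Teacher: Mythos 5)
Your proposal follows the same four-part architecture as the paper's proof (rewrite \eqref{eq:selfpsi} via orthonormality $\to$ a priori compactness $\to$ spectral limit $\to$ uniqueness), but your Step 2 is a genuinely cleaner route to compactness. The paper's Lemma~\ref{lemma:phi1} only bounds $|\dot\phi_d|$ by $\kappa\sqrt d$ (via monotonicity of $\L$ and $|\tr\Delta_W|\le 2\sqrt{\L(W^0)}$), which is not enough by itself; the paper then needs the long two-part argument of Lemma~\ref{lemma:phi2} (uniform bound on $\phi_d$ from the implicit equation, then an ad hoc equicontinuity estimate) to make Arzelà--Ascoli bite. You instead observe that $\delta_d=\tr\Delta_{W_d}$ solves the closed scalar ODE $\dot\delta_d=-2\delta_d-2\delta_d^2-4\tr(Z_d\Delta_{W_d})$ with $\delta_d(0)=0$ and forcing $O(1/d)$ (by Cauchy--Schwarz and the $O(1/d)$ initial loss), and the dissipative drift $-2\delta_d(1+\delta_d)$ then traps $\delta_d$ in an $O(1/d)$ neighbourhood of $0$, giving $|\dot\phi_d|=d\,|\delta_d(\gamma d)|=O(1)$ uniformly in $\gamma$ and $d$. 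This is a strictly sharper bound, it immediately yields uniform Lipschitz control and boundedness on compacts, and it short-circuits both of the paper's Lemmas~\ref{lemma:phi1}--\ref{lemma:phi2}. Worth keeping.

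Two places where you are more cavalier than the paper, both fixable. First, your Step 3 applies the limiting spectral distribution to the $d$-dependent, $\gamma$-dependent test functions $x\mapsto\log(1+c_d(\gamma)x)$; you need the convergence $\frac1{m_d}\tr f(Y_d)\to\int f\,\d\mu$ to be \emph{almost sure} and \emph{uniform over a compact family of test functions}. The paper's Lemma~\ref{lemma:largedev} supplies exactly this, and relies on the large-deviation principle of \citet{hiai} plus Borel--Cantelli — pointedly \emph{not} on \citet[Theorem~1.7]{manova}, which as the paper notes only yields convergence in moments and probability; citing it for the a.s. statement is a gap. Second, ``Cauchy--Lipschitz gives a unique \emph{global} solution'' overstates what local Lipschitz continuity buys: you still need an argument ruling out finite-time blowup and a check that the trajectory stays in the domain where the right-hand side is Lipschitz (in particular $J_\phi\ge1$ so that $\Theta'$ is evaluated away from $-1$). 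The paper's Lemma~\ref{lemma:phi5} does this explicitly (shows $F_M\le e^{4\gamma/\alpha}$, $1\le J_M\le e^{4\gamma/\alpha^*}$, hence the maximal solution cannot escape any compact in finite time). For the purpose of uniqueness of subsequential limits one can get away with local uniqueness plus continuation, but the global-existence bookkeeping should be spelled out.
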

This result is proven in \cref{ProofPhi}. The idea is to start from equation \eqref{eq:selfpsi} which allows to obtain an implicit equation of $\phi_d$. The key observation is that this equation only depends on the matrix $Y_d$ (this is a consequence of equation \eqref{eq:exponentialorthonormal}).

The equation above, in which the dimension only appears through the parameters $\alpha$ and $\alpha^*$, allows to analyze the high-dimensional limit of the flow. Although it cannot be solved analytically, it is to be compared with the implicit equation \eqref{eq:selfpsi} on $\psi_d$: this new equation is purely deterministic and does not depend on the initialization of the flow. 
\begin{figure}[ht]
    \centering
    \includegraphics[width=\linewidth]{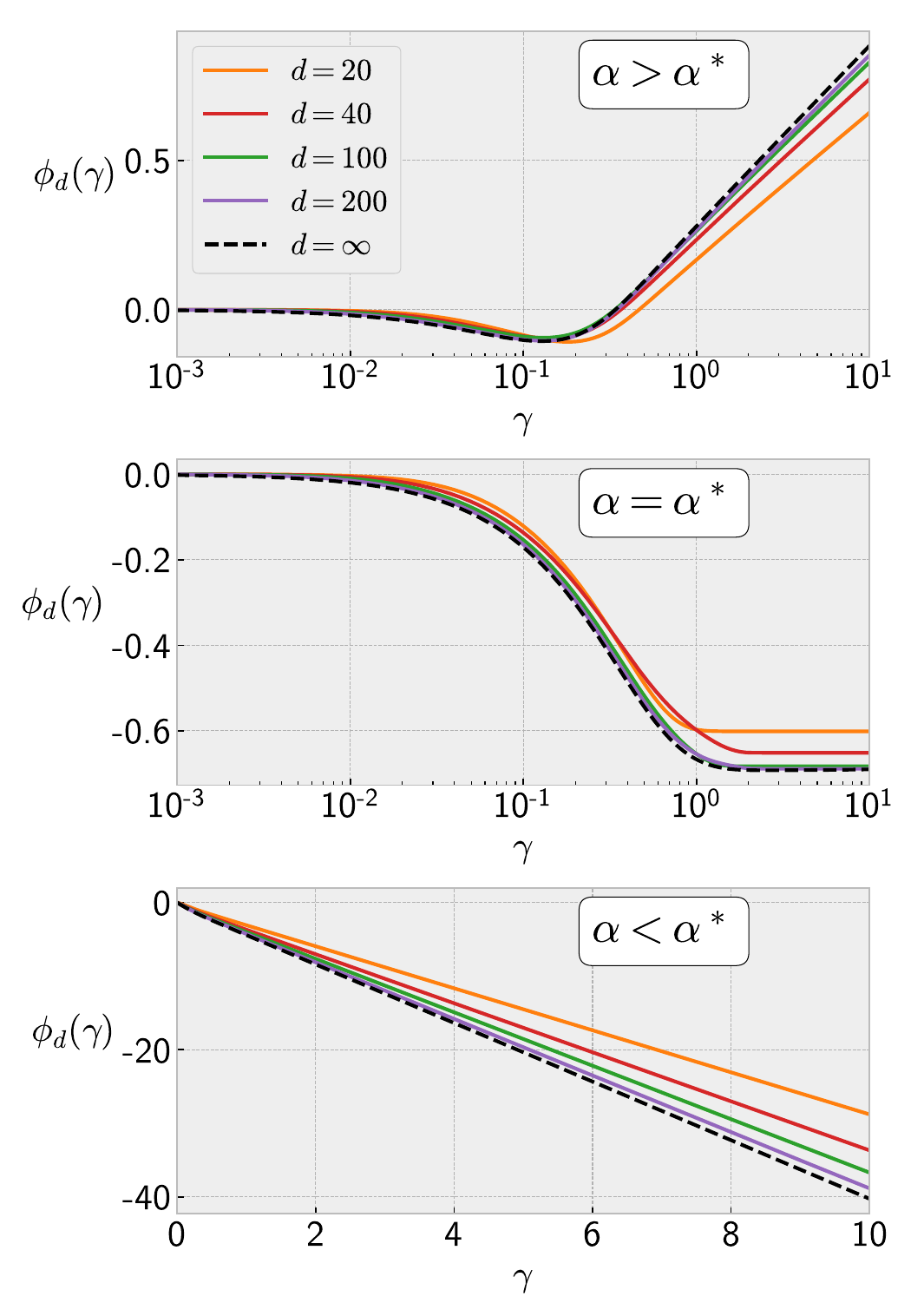}    
    \caption{Simulations of $\phi_d(\gamma)$ (defined in equation \eqref{eq:phi}) with $\gamma = t/d$ for increasing values of $d$. Finite $d$: simulation using gradient descent with stepsize $\eta = 10^{-2}$, orthonormal teachers and students at initialization. $d = \infty$: simulations using discretized version of equation \eqref{eq:selfphi}. Top: $m_d = d/2$, $m_d^* =  d/4$, middle: $m_d = d/2$, $m_d^* =  d/2$, bottom: $m_d = d/4$, $m_d^* = d/2$. Results averaged over 5 simulations. }
    \label{fig1}
\end{figure}
We can obtain an approximate numerical solution of $\phi$ by differentiating this equation with respect to $\gamma$, and obtaining an equation of the form:
\begin{equation*}
    \phi(\gamma) = \Omega \big( F_\phi(\gamma), G_\phi(\gamma), \gamma \big).
\end{equation*}
From the expression of $F_\phi$ and $G_\phi$, this can be numerically handled as a standard differential equation. We have compared this numerical solution to the one obtained by a direct solution of the flow equations in finite dimension. \cref{fig1} displays the evolution of the function $\phi_d$ for increasing values of $d$, as well as the numerical solution of equation \eqref{eq:selfphi}. It shows a quite fast convergence with the dimension (determining the convergence rate as $d \to \infty$ is a challenge that we leave for future studies). \cref{fig1} also showcases very different behaviours for $\phi_d$ depending on the values of $m_d$ and $m_d^*$: logarithmic for $m_d > m_d^*$, converging for $m_d = m_d^*$ and linear for $m_d < m_d^*$. 
Those behaviours of $\phi_d$ in the different regimes can be also understood from finite dimension estimation. In \cref{App:CVRate}, we determine an asymptotic development of the function $\psi(t)$ as $t \to \infty$, which allows to recover the behaviours displayed in \cref{fig1}.

\subsection{High-Dimensional Signal Recovery} \label{subsec:overlap}
We now investigate the question of the signal retrieval in the high-dimensional limit. \cref{PropCV} in \cref{sec:CV} shows that strong recovery as $t \to \infty$ is guaranteed whenever $m \geq m^*$. The question is now whether this criterion still holds as $d \to \infty$. To this end, we introduce the overlap, a scalar quantity describing the alignment between the teachers and students: 
\begin{equation*}
    \chi(Z_d(t), Z_d^*) = \frac{\big| \tr\big(Z_d(t) Z_d^*\big) \big|}{\big\|Z_d(t) \big\|_F \, \big\| Z^*_d \big\|_F},
\end{equation*}
with $Z_d(t) = W_d(t)W_d(t)^T$. A strong signal recovery is obtained whenever $\chi(Z_d, Z_d^*) = 1$, corresponding to a perfect alignment. Moreover, we say that the flow achieves a weak recovery if $\chi(Z_d, Z_d^*)$ is larger than a purely random overlap $\chi_d^0$, i.e., the overlap between two independent projection matrices. In the vector case, where it is commonly used (replacing the trace by the standard inner product on $\R^d$ and $\| \. \|_F$ by the Euclidean norm), this purely random overlap goes to zero as $d \to \infty$. However, in our case, we show that this quantity stays positive in the high dimensional limit. More precisely, $\chi_d^0$ can be expressed as the overlap between the teachers and the students at initialization, which leads to the limit $\chi_d^0 \xrightarrow[]{} \sqrt{\alpha \alpha^*}$, see \cref{App:Overlap}.

In the same spirit as \cref{PropPhi}, we determine the convergence of the overlap between the students and teachers as $d \to \infty$ and at timescale $t_d = \gamma d$. 

\begin{proposition} \label{PropOverlap}
    Let $W_d(t)$ be solution of the gradient flow in equation \eqref{eq:flow} with initial condition $W_d^0$. Then, as $d \to \infty$, almost surely, the function $\chi_d(\gamma) = \chi \big( Z_d(\gamma d), Z_d^* \big)$ uniformly converges on every compact of $\R_+$. Let $\chi(\gamma) = \underset{d \to \infty}{\lim} \chi_d(\gamma)$. We have: 
    \begin{equation*}
        \chi(\gamma) \xrightarrow[\gamma \to \infty]{} \min \left( \sqrt{\frac{\alpha}{\alpha^*}}, 1 \right).
    \end{equation*}
\end{proposition}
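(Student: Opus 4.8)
The plan is to combine the analytic solution of \cref{Prop:Analytic} with the orthonormality sampling assumptions so that $\chi_d(\gamma)$ becomes a ratio of spectral functionals of the single matrix $Y_d = U_d^{0T}U_d^*U_d^{*T}U_d^0$, then take $d\to\infty$ using \cref{PropPhi} together with the convergence of the empirical spectral distribution of $Y_d$ to $\mu$, and finally study the resulting deterministic expression as $\gamma\to\infty$ via the self-consistent equation \eqref{eq:selfphi}. For the reduction, start from \eqref{eq:sol}: $Z_d(\gamma d) = M_d^*(\gamma d)\,\mathcal{U}_d^{-1}\,M_d^*(\gamma d)^T$ with $\mathcal{U}_d = I_{m_d} + 4\int_0^{\gamma d}M_d^*(s)^T M_d^*(s)\,\d s$, where $M_d^*$ is the $d$-dependent version of $M^*$ in \cref{Prop:Analytic}. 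Under the sampling assumptions of \cref{subsec:sampling} one has $\tr(Z_d^*)=1$, $Z_d^* = \tfrac1{m_d^*}U_d^*U_d^{*T}$ with $U_d^*U_d^{*T}$ an orthogonal projection (so $\|Z_d^*\|_F^2 = 1/m_d^*$), and, by \eqref{eq:exponentialorthonormal},
\[
M_d^*(s)^T M_d^*(s) \;=\; \frac{e^{-2\phi_d(s/d)}}{m_d}\left(I_{m_d} + (e^{4s/m_d^*}-1)\,Y_d\right),
\]
so every matrix in the formula is a polynomial in $Y_d$. Writing $\mu_d$ for the empirical spectral distribution of $Y_d$, $\kappa_d = e^{4\gamma d/m_d^*}$, and $A_d,B_d\ge0$ for the scalars with $\mathcal{U}_d = (1+A_d)I_{m_d}+B_dY_d$, a computation of $\tr(Z_d(\gamma d)Z_d^*)$ and $\tr(Z_d(\gamma d)^2)$ gives
\[
\chi_d(\gamma) \;=\; \sqrt{\tfrac{m_d}{m_d^*}}\,\kappa_d\;\frac{\displaystyle\int \frac{y}{1+A_d+B_d y}\,\d\mu_d(y)}{\sqrt{\displaystyle\int\left(\frac{1+(\kappa_d-1)y}{1+A_d+B_d y}\right)^{2}\d\mu_d(y)}},
\]
the absolute value being irrelevant since $Z_d,Z_d^*\succeq0$.

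For the limit $d\to\infty$: by \cref{PropPhi}, $\phi_d\to\phi$ uniformly on compacts a.s.; by the $\beta$-Jacobi convergence quoted after \eqref{eq:manova}, $\mu_d\to\mu$ weakly a.s.; and $m_d/d\to\alpha$, $m_d^*/d\to\alpha^*$. Dominated convergence then gives $\kappa_d(\gamma)\to\kappa(\gamma):=e^{4\gamma/\alpha^*}$, $1+A_d\to F_\phi(\gamma)$, $B_d\to G_\phi(\gamma)-F_\phi(\gamma)$. Since $A_d,B_d\ge0$, the denominators satisfy $1+A_d+B_dy\ge1$ on $[0,1]$, so both integrands are continuous and uniformly bounded there; weak convergence of $\mu_d$ passes the integrals to the limit, and an equicontinuity argument in $\gamma$ (all relevant $\gamma$-derivatives are bounded uniformly in $d$ on compacts) upgrades this to uniform convergence on compacts, yielding $\chi_d\to\chi$ a.s., uniformly on compacts, with
\[
\chi(\gamma) = \sqrt{\tfrac{\alpha}{\alpha^*}}\,\kappa(\gamma)\;\frac{\displaystyle\int\frac{y}{F_\phi(\gamma)+(G_\phi(\gamma)-F_\phi(\gamma))y}\,\d\mu(y)}{\sqrt{\displaystyle\int\left(\frac{1+(\kappa(\gamma)-1)y}{F_\phi(\gamma)+(G_\phi(\gamma)-F_\phi(\gamma))y}\right)^{2}\d\mu(y)}}.
\]

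It remains to let $\gamma\to\infty$, feeding into this formula the large-$\gamma$ behaviour of $F_\phi,G_\phi,\phi$ read off from \eqref{eq:selfphi}. First $G_\phi/F_\phi\to\infty$: were it bounded, \eqref{eq:selfphi} would force $F_\phi\gtrsim e^{4\gamma/\alpha}$, hence (via $F_\phi'=\tfrac4\alpha e^{-2\phi}$) $\phi\to-\infty$ linearly and $G_\phi\gtrsim e^{(4/\alpha+4/\alpha^*)\gamma}$, a contradiction. Given $G_\phi/F_\phi\to\infty$, a Laplace/Watson estimate yields $G_\phi\sim\tfrac{\alpha^*}4 F_\phi'\,e^{4\gamma/\alpha^*}$ and $\Theta(G_\phi/F_\phi-1) = C_\Theta\log(G_\phi/F_\phi)+O(1)$ with $C_\Theta = \alpha-(\alpha-\alpha^*)^+-(\alpha+\alpha^*-1)^+$ ($=\alpha$ times the mass of the absolutely continuous part of $\mu$); substituting, \eqref{eq:selfphi} collapses to leading order to $(\alpha-\alpha^*)\log F_\phi = 2\alpha^*\phi+O(1)$ when $\alpha\ge\alpha^*$ and to $\alpha\log G_\phi = 4\gamma+O(1)$ when $\alpha<\alpha^*$. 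Bootstrapping with $F_\phi'=\tfrac4\alpha e^{-2\phi}$ gives, for $\alpha>\alpha^*$: $F_\phi\asymp\gamma^{\alpha^*/\alpha}$, $\phi\sim\tfrac{\alpha-\alpha^*}{2\alpha}\log\gamma$, $G_\phi\asymp\gamma^{-(\alpha-\alpha^*)/\alpha}e^{4\gamma/\alpha^*}$, so $\kappa/G_\phi\to\infty$ polynomially; for $\alpha<\alpha^*$: $F_\phi\asymp e^{q\gamma}$ with $q=\tfrac{4(\alpha^*-\alpha)}{\alpha\alpha^*}$, $\phi\sim-\tfrac q2\gamma$, $G_\phi\asymp e^{4\gamma/\alpha}$, so $\kappa/G_\phi\to0$; and $\alpha=\alpha^*$ ($r_-=0$, $\phi$ bounded, $F_\phi\asymp\gamma$, $G_\phi\asymp e^{4\gamma/\alpha}$) is handled likewise. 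Then the spectral integrals concentrate: for $y$ away from $0$ the two integrands behave like $1/G_\phi$ and $(\kappa/G_\phi)^2$, while at $y=0$ the second equals $1/F_\phi^2$; so the numerator is $\sim\tfrac1{G_\phi}\mu(\{y>0\})$ and the denominator $\sim(\kappa/G_\phi)^2\mu(\{y>0\})+(1/F_\phi)^2\mu(\{0\})$. If $\alpha\le\alpha^*$ then $\mu(\{0\})=0$, $\mu(\{y>0\})=1$ and $\chi(\gamma)\to\sqrt{\alpha/\alpha^*}$; if $\alpha>\alpha^*$ then $\mu(\{0\})=1-\alpha^*/\alpha$, $\mu(\{y>0\})=\alpha^*/\alpha$, but $(\kappa/G_\phi)^2\to\infty$ beats $(1/F_\phi)^2\to0$ and the constants combine to $\chi(\gamma)\to\sqrt{\alpha/\alpha^*}\cdot\sqrt{\alpha^*/\alpha}=1$. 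In all cases $\chi(\gamma)\to\min(\sqrt{\alpha/\alpha^*},1)$.

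The main obstacle is this last step: rigorously extracting the $\gamma\to\infty$ asymptotics of $\phi,F_\phi,G_\phi$ from the implicit equation \eqref{eq:selfphi} — justifying the Laplace-type estimates and establishing $G_\phi/F_\phi\to\infty$ — and in particular the borderline regime $\alpha=\alpha^*$, where $r_-=0$ makes the density of $\mu$ blow up like $1/\sqrt y$ near $0$, so the concentration of the spectral integrals there requires a separate argument. A secondary technical point is supplying the equicontinuity in $\gamma$ used for the uniform convergence when $d\to\infty$.
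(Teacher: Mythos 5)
Your proposal mirrors the paper's proof step for step: the same reduction of $\chi_d$ to spectral functionals of $Y_d$ via Proposition~\ref{Prop:Analytic} and equation~\eqref{eq:exponentialorthonormal} (this is the paper's Lemma~\ref{lemma:overlap1}, with your $(1+A_d, B_d)$ differing from the paper's $(1, J_d-1)$ only by a common factor of $F_d$ that cancels in the ratio), the same passage to the $d\to\infty$ limit using convergence of the spectral measure of $Y_d$ together with Proposition~\ref{PropPhi} (Lemmas~\ref{lemma:overlap2} and~\ref{lemma:largedev}), and the same $\gamma\to\infty$ analysis that first shows $J=G_\phi/F_\phi\to\infty$ by contradiction from~\eqref{eq:selfphi} and then extracts the rate-dependent asymptotics of $F_\phi,G_\phi,J$ and of the spectral integrals (Lemmas~\ref{lemma:overlap3}--\ref{lemma:overlap5}). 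The technical gaps you flag are genuine but are precisely what the paper's Lemmas~\ref{lemma:overlap3} and~\ref{lemma:overlap4} supply — including the $\alpha=\alpha^*$ case ($r_-=0$, density $\sim 1/\sqrt{y}$ near $0$), which the paper resolves by the explicit substitution $x=r_+t^2/(1+t^2)$ and a partial-fraction evaluation giving the $O(J^{-1/2})$ correction — so your outline is the correct proof, just not fully executed in those spots.
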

We prove this proposition in \cref{ProofOverlap}. The proof is a consequence of the convergence of the empirical measure associated with the eigenvalues of $Y_d$ as well as the result of \cref{PropPhi}. 
\begin{figure} [ht]
    \centering
    \includegraphics[width=\linewidth]{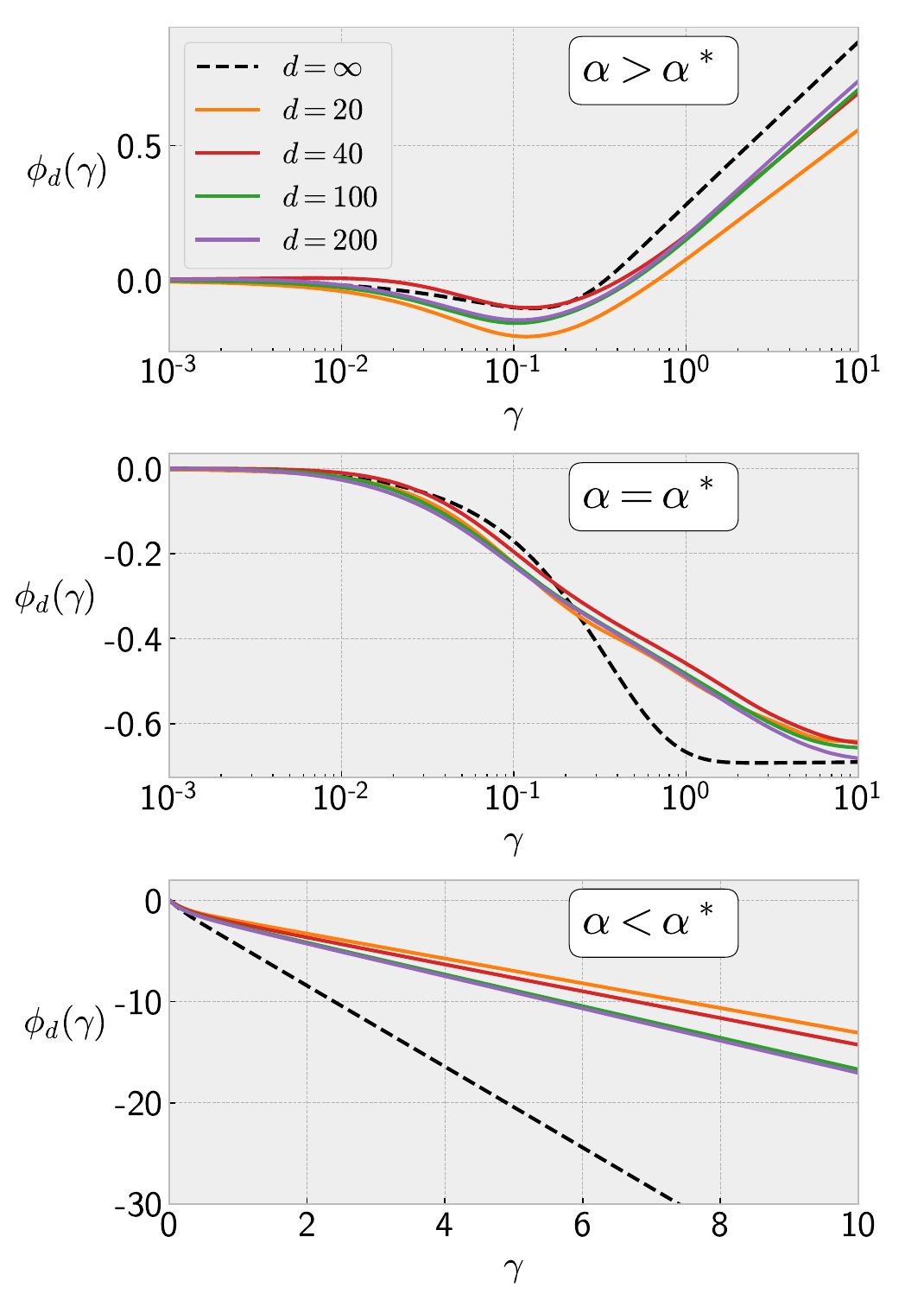}
    \caption{Simulations of $\phi_d(\gamma)$ (defined in equation \eqref{eq:phi}) with $\gamma = t/d$ for increasing values of $d$. Finite~$d$: simulation using gradient descent with stepsize $\eta = 10^{-2}$, and Gaussian $\N(0, I_d/d)$ teachers and students at initialization. $d = \infty$: simulations using discretized version of equation \eqref{eq:selfphi} Top: $m_d =   d/2$, $m_d^* =  d/4$, middle: $m_d = d/2$, $m_d^* = d/2$, bottom: $m_d = d/4$, $m_d^* = d/2$. Results averaged over 5 simulations. }
    \label{fig2}
\end{figure}
This last result gives the behaviour of the overlap in the regime $t \gg d$. On this timescale, we obtain a strong recovery of the signal when $\alpha \geq \alpha^*$, and a weak recovery when $\alpha < \alpha^*$. This corresponds to the same threshold as the one obtained in \cref{PropCV}. More precisely, when the teachers are orthonormal, one can compute the overlap between $Z_d^*$ and the limit $Z_d^\infty = \lim_{t \to \infty} Z_d(t)$ derived in \cref{PropCV} in finite dimension, and obtain the same result as $d \to \infty$, i.e., that the limits $d,t\rightarrow \infty$ commute:
\begin{equation*}
   \lim_{d \to \infty} \lim_{t \to \infty} \chi \big( Z_d(t), Z_d^* \big) = \lim_{\gamma \to \infty} \lim_{d \to \infty} \chi \big(Z_d(\gamma d), Z_d^* \big).
\end{equation*}
Moreover, still in the orthonormal case, one can show that the quantity obtained in the previous proposition realizes the maximum overlap possible for a given number of students. We detail this result in \cref{App:Overlap}.

The previous results are valid for teachers and initial students which are drawn orthonormally. However, we believe it still holds for a larger class of distributions. Indeed, the determination of the convergence rates at finite dimension in \cref{App:CVRate} allows to obtain the asymptotic behaviour of the flow as $t \to \infty$, without any distributional assumption. Interestingly, those behaviours coincide with the ones obtained throughout the section. To support the conjectured broader generality of our results, we show in \cref{fig2} a simulation of the function $\phi_d$ (directly computed from the flow), where both teacher and students are independently drawn from a Gaussian distribution $\N(0, I_d/d)$. This is compared to the approximate numerical solution of equation \eqref{eq:selfphi}, where they are drawn orthonormally. We find again a quite fast convergence for $d \rightarrow \infty$, and a similar qualitative behaviour. However, the infinite dimensional limit of $\phi_d$ appears to be different from the orthonormal case. Generalizing the self-consistent analysis of \cref{sec:highdim} to the Gaussian case is left for future works.

\section{Conclusion and Further Work}
In this paper we presented new theoretical results on the optimization of one-hidden layer neural networks with quadratic activation functions. Focusing on the population loss gradient flow, we derived convergence properties and showed that a slight overparameterization is enough to achieve signal recovery. Then, we derived a high-dimensional limit for the flow and showed that our criterion still holds whenever the initialized students and teachers are orthonormal families. 

\textbf{Further work.} The assumptions we made along this paper leaves several challenges of interest:
\begin{itemize}
    \item As mentioned previously, we believe that the orthonormality assumption we made throughout \cref{sec:highdim} is not mandatory and that the results of this section can be generalized to a larger class of distributions, as suggested by \cref{fig2}.
    \item Our study essentially focused on the optimization of the population loss. The next step is to understand the gradient flow associated with the empirical loss on a finite dataset. This extension has already been studied in several papers \citep[see][]{mannelli, Garmanik, Du}, but very few results were obtained regarding the dynamics of the flow. One promising strategy relies on the use of statistical physics methods, and more precisely the dynamical mean-field theory, which allows to obtain a low-dimensional set of equations describing the dynamics in the limit where the dimension and the number of samples jointly go to infinity \citep[see][]{DMFTGBA, Mignacco}. 
    \item More generally, it is of a high interest to obtain similar results for general activation functions, although several methods we used throughout this work are specific to the quadratic activation. 
\end{itemize}

\medskip

\section*{Acknowledgements}

The authors acknowledge support from the French government under the management of the Agence Nationale de la Recherche as part of the “Investissements d’avenir” program, reference ANR-19-P3IA0001 (PRAIRIE 3IA Institute). SM also thanks Louis-Pierre Chaintron for fruitful mathematical discussions. 

\medskip

\begingroup
    \setlength{\bibsep}{5pt}
    \bibliography{Refs}
\endgroup

\newpage
\onecolumn
\appendix

\newgeometry{textwidth=15.7cm, textheight=23.3cm}
\raggedbottom

\section{Additional Results}

In this section we provide some additional results and insights. As mentioned in \cref{sec:CV}, we derive convergence rates in the overparameterized case $m \geq \min(m^*, d)$ in \cref{App:CVRate}. In \cref{App:Manova}, we give a more detailed description of the results obtained for the limit distributions of product of projections that we introduce in \cref{subsec:sampling}. Finally, we discuss in \cref{App:Overlap} the notion of overlap introduced in equation \eqref{eq:overlap} and prove some results mentioned throughout \cref{sec:highdim}. 

\subsection{Convergence Rates} \label{App:CVRate}

Following the convergence result derived in \cref{PropCV}, the natural question is to understand how fast this convergence is.  In the following, we investigate the convergence rates associated with the gradient flow in the case $m \geq \min(m^*, d)$. The method we use in the proof can also be applied to the underparameterized setting $m < m^*$, but we choose to focus on the relevant case $m \geq \min(m^*, d)$, where the gradient flow converges towards the teacher matrix. 

In the following proposition, the convergence rates are derived in terms of the loss, i.e., we obtain a bound on $\L(W(t))$ where $W(t)$ is solution of the gradient flow in equation \eqref{eq:flow}. Due to the expression of the loss, this leads to a bound on the distance $\|W(t)W(t)^T - Z^*\|$.
\begin{proposition} \label{PropCVRate}
Let $W(t)$ be solution of the flow \eqref{eq:flow} with initial condition $W(t=0) = W^0$. Suppose that $m \geq \min(m^*, d)$. Denote $\mu$ the smallest non-zero eigenvalues of $Z^*$. Then, under Assumption \ref{AssumptionInit}, with probability one:
\begin{itemize}
    \item If $m, m^* \geq d$:
    \begin{equation*}
        \L(W(t)) \underset{t \to \infty}{=}  O \big( e^{-8 \mu t} \big).
    \end{equation*}
    \item If $m=m^* < d$: 
    \begin{equation*}
\L(W(t)) \underset{t \to \infty}{=}  O \big( e^{-4 \mu t} \big).
\end{equation*}
    \item If $m, d > m^*$: 
    \begin{equation*}
 \L(W(t)) \underset{t \to \infty}{=} O \left( \frac{1}{t^2} \right).
\end{equation*}
\end{itemize}
\end{proposition}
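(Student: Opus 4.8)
The plan is to reduce the claim to a decay estimate for $\|\Delta_{W(t)}\|_F$, where $\Delta_W = WW^T - Z^*$, since by \eqref{eq:loss} the loss is comparable to $\|\Delta_W\|_F^2$ up to a dimensional constant. From \cref{PropCV}, under \cref{AssumptionInit} we already know that $Z(t):=W(t)W(t)^T\to Z^*$ almost surely when $m\ge\min(m^*,d)$ and that $W^0$ has full rank, so the analysis can be made local near $Z^*$. Throughout I would diagonalize $Z^*$ and split $\R^d=E_+\oplus E_0$ into its range and kernel, with orthogonal projectors $P,Q$, so that $Z^*=PZ^*P\succeq\mu\,P$; correspondingly $Z(t)$ and $\Delta_{W(t)}$ split into blocks indexed by $\{+,0\}^2$. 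The three regimes then differ precisely in which block of $\Delta_{W(t)}$ is slowest to vanish, and each is handled by a differential inequality on that block.

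For the two exponential regimes I would use a gradient-domination (Polyak--\L ojasiewicz) argument. Differentiating along \eqref{eq:flow} gives the exact identity
\[
\tfrac{\d}{\d t}\mathcal{L}(W(t)) = -\|\nabla\mathcal{L}(W(t))\|_F^2 = -\tr\!\big(M(t)\,Z(t)\,M(t)\big),\qquad M(t):=2\Delta_{W(t)}+\tr(\Delta_{W(t)})\,I_d ,
\]
and a direct computation yields $\mathcal{L}(W)=\tfrac18\big(\|M\|_F^2-(\tr M)^2/(d+2)\big)$, hence $\|M\|_F^2\ge 8\,\mathcal{L}(W)$. When $m,m^*\ge d$ (so $m^*=d$ and $Z^*$ is invertible), $Z(t)\to Z^*\succeq\mu I_d$, so $\tr(MZM)\ge\lambda_{\min}(Z(t))\,\|M\|_F^2\ge(\mu-o(1))\cdot 8\,\mathcal{L}(W(t))$; integrating $\dot{\mathcal{L}}\le-(8\mu-o(1))\mathcal{L}$ gives $\mathcal{L}=O(e^{-(8\mu-\varepsilon)t})$ for every $\varepsilon>0$, and a bootstrap — feeding this crude bound back into the $o(1)$ term, which is of order $\|\Delta_{W(t)}\|_{\mathrm{op}}=O(\sqrt{\mathcal{L}})$ and hence integrable in $t$ — sharpens the exponent to $8\mu$. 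When $m=m^*<d$, $Z(t)$ has the same rank as $Z^*$, and one checks, conveniently from the closed form of \cref{Prop:Analytic}, that the kernel block $QZ(t)Q$ and the range block $P\Delta_{W(t)}P$ are of strictly smaller order than the cross block $PZ(t)Q$; both $\mathcal{L}(W)$ and $\tr(MZM)$ are then governed by $\|PZ(t)Q\|_F^2$, but in $\tr(MZM)$ the surviving contribution $\approx 4\tr\!\big(Z_{++}\,(QZP)^{T}(QZP)\big)$ sees $Z$ only through its range block $Z_{++}\succeq(\mu-o(1))I_{m^*}$ — the matching $E_0$--$E_0$ contribution being of lower order — which gives $\tr(MZM)\ge(4\mu-o(1))\mathcal{L}(W)$; the same bootstrap then yields $\mathcal{L}=O(e^{-4\mu t})$.

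For the polynomial regime $m,d>m^*$ the mechanism is different: $Z(t)$ has rank $>m^*=\mathrm{rank}(Z^*)$, so it necessarily carries a nonzero block $B(t):=QZ(t)Q\succeq0$ in the kernel of $Z^*$, which can be drained only polynomially. Projecting \eqref{eq:flow} onto $E_0$ gives
\[
\dot B = -2\,\tr(\Delta_W)\,B - 4B^2 - 4\,(QZ(t)P)(QZ(t)P)^T ,\qquad \tr(\Delta_W)=\tr B+\tr\!\big(P\Delta_W P\big).
\]
The first step is to show that the cross block decays essentially exponentially — from $\tfrac{\d}{\d t}\|QZP\|_F^2\le-(4\mu-o(1))\|QZP\|_F^2$ by Gr\"onwall, using $Z^*_{++}\succeq\mu I$ — and that $P\Delta_W P=O(1/t)$, via the variation-of-constants formula for the linearization $X\mapsto-2\tr(X)Z^*_{++}-2Z^*_{++}X-2XZ^*_{++}$ of \eqref{eq:flow} on $\mathcal{S}_{m^*}$, whose generator has spectral gap $\ge4\mu>0$ and whose forcing is the $O(1/t)$ term $\tr(B)\,Z^*_{++}$. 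This reduces the $B$-equation to the autonomous $\dot B=-2\tr(B)B-4B^2$ up to exponentially small errors; this flow preserves the eigenbasis of $B$ and its eigenvalues equilibrate to the common asymptotics $b(t)\sim\big(2(d-m^*+2)\,t\big)^{-1}$, whence $\|\Delta_{W(t)}\|_F\sim\|B(t)\|_F=O(1/t)$ and $\mathcal{L}=O(1/t^2)$.

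The step I expect to be the main obstacle is controlling the self-consistent scalar $\psi$ of \cref{Prop:Analytic} — equivalently its prefactor $c(t)=e^{t\tr(Z^*)-\psi(t)}$ — on which the block-size comparisons (needed when $m=m^*<d$) and the growth rates of the relevant spectral quantities all depend, since these are coupled to the very decay rate one is trying to establish. In particular $c(t)$ converges to a positive constant in the exponential regimes but decays like $t^{-(d-m^*)/2}$ in the polynomial one, so the asymptotics of $\int_0^t c(s)^2\,\d s$ and of $\int_0^t c(s)^2e^{4\mu s}\,\d s$ must be pinned down self-consistently; the bootstrap already described is the device that closes this loop (start from the qualitative convergence of \cref{PropCV}, extract a first non-sharp rate, substitute it into the self-consistency relations). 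The remaining ingredients — the spectral-gap bound for the linearized operator and the equilibration of the eigenvalues of $B$ in the polynomial case — are routine.
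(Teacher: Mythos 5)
Your plan coincides with the paper on the two easy parts: the $m,m^*\ge d$ case is exactly the paper's Polyak--\L ojasiewicz argument ($\dot{\mathcal{L}} = -\tr(M^2 Z)\le -8\lambda_{\min}(Z)\mathcal{L}$ with a bootstrap to tighten $o(t)$ to $O(1)$ in the exponent), and for $m^*<d$ you use the same $2\times2$ block decomposition in the basis adapted to $Z^*$ that the paper uses (its $A,B,C$ being your $P\Delta P,\ PZQ,\ QZQ$). The constant $4\mu$ you extract for $m=m^*<d$ via the surviving $4\tr(Z_{++}B^TB)$ term is the same one that the paper gets from its Gr\"onwall bound on $\|PZQ\|_F$, so the rate identification is right in all three regimes.

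Where the proposal has a genuine gap is precisely where you flag it: controlling the self-consistent scalar. You observe that the asymptotics of $c(t)=e^{-\xi(t)}$, $\xi(t)=\psi(t)-\tr(Z^*)t$, are coupled to the rates you are trying to prove, and propose to break the loop by a bootstrap starting from the qualitative convergence of \cref{PropCV}. But qualitative convergence only gives $\tr(\Delta_{W(t)})=o(1)$, and this is too weak to seed your ODE comparison: in the polynomial regime, plugging $\epsilon(t)=\tr(\Delta_W)=o(1)$ into $\dot b\le -4b^2-2\epsilon(t)b$ only gives $b(t)\to0$, not $b(t)=O(1/t)$, and the linearized $A$-equation in turn needs $\tr(QZQ)=O(1/t)$ as a forcing bound before variation of constants can deliver $P\Delta P=O(1/t)$. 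So the loop does not close as described; some independent, quantitative handle on $\xi(t)$ is required before the bootstrap can even start. The paper supplies this via two lemmas you do not have analogues of: an eigenvalue separation estimate for $W^{0T}\!\int_0^t e^{-2\psi+2s\tr(Z^*)}e^{4Z^*s}\,ds\,W^0$ (its Lemma on eigenvalues), and a resulting expansion $\xi(t)=\tfrac{1}{2}\tfrac{\min(m,d)-m^*}{\min(m,d)+2}\log t+O(1)$ for $m\ge m^*$. That expansion is what makes $\tr(QZQ)=O(1/t)$ drop out of the implicit solution, after which the Lyapunov/Gr\"onwall arguments on $A$ and $B$ are unobstructed.

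Two smaller inaccuracies confirm that the self-consistency is not yet under control in your sketch. First, you predict $c(t)\sim t^{-(d-m^*)/2}$ in the polynomial regime; the correct exponent is $-\tfrac{\min(m,d)-m^*}{2(\min(m,d)+2)}$, which differs both in the $\min(m,d)$ (you cannot replace it by $d$ when $m<d$) and, more seriously, in the missing $\min(m,d)+2$ in the denominator, so your $c(t)$ decays far too fast and the integrals $\int_0^t c(s)^2\,ds$, $\int_0^t c(s)^2 e^{4\mu s}\,ds$ would scale wrongly. Second, your equilibrium constant $b(t)\sim (2(d-m^*+2)t)^{-1}$ uses the rank $d-m^*$ for the kernel block, which is only correct when $m\ge d$; for $m^*<m<d$ the slow block has rank $m-m^*$, consistent with the $\min(m,d)-m^*$ appearing in the paper's $\log t$ coefficient. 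Neither error changes the final $O(1/t^2)$, but both are symptoms of the same unproved step.

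In short: the approach and decomposition are essentially the paper's, and the rates you identify are right, but the proposal does not actually establish the quantitative control on $\psi$ that the rest of the argument hinges on, and the concrete predictions you make about that control are incorrect. To complete the proof along your lines you would need something equivalent to the paper's separation-of-eigenvalues lemma and the resulting $\xi(t)=\beta\log t+O(1)$ expansion with the correct $\beta$.
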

This proposition is proven in \cref{ProofCVRate}. The proof uses the fact that if $W(t)$ is solution of the flow in equation \eqref{eq:flow}, then with probability one, $W(t)W(t)^T \xrightarrow[]{} Z^*$ as $t \to \infty$. As explained in \cref{sec:dynamics}, the main challenge to understand the long time dynamics of the flow is to obtain the behaviour of the function $\psi(t)$ introduced in \cref{Prop:Analytic}. 
\vspace{0.1 cm}
\begin{figure} [H]
    \centering
    \includegraphics[width=0.75\linewidth]{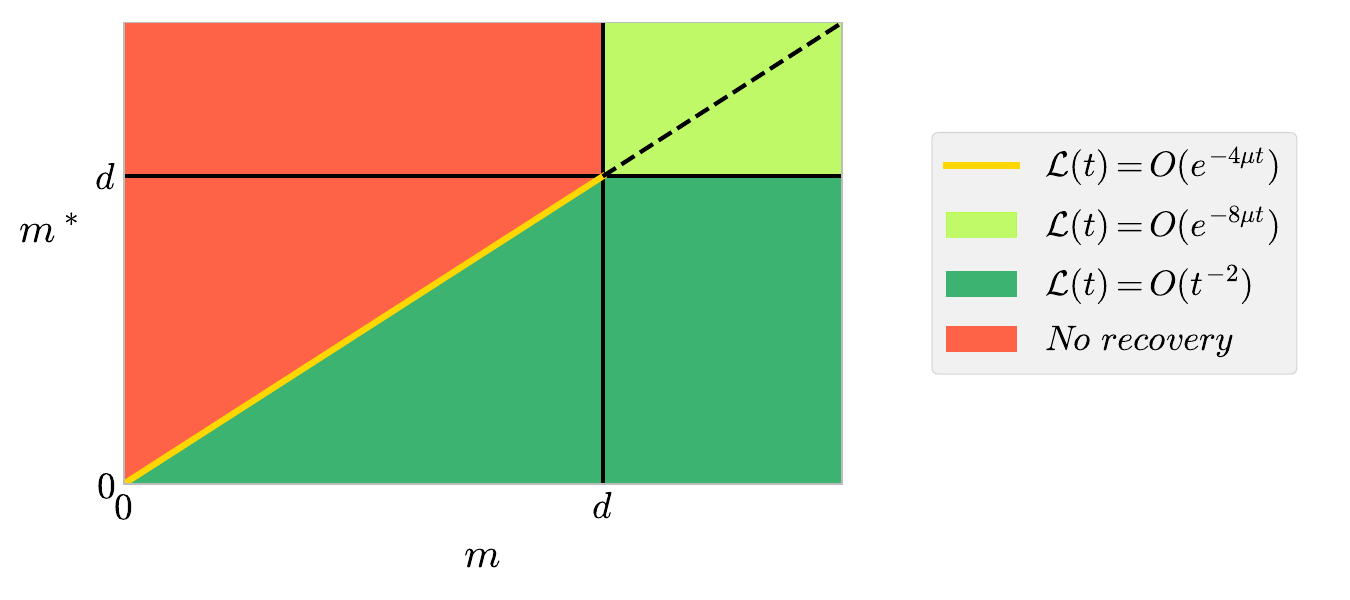}
    \caption{Diagram for the convergence rates of $\L(t) = \L(W(t))$ depending on the value of $m, m^*$. Valid under the initialization Assumption \ref{AssumptionInit}.  }
    \label{fig:diagrammCV}
\end{figure}
\cref{fig:diagrammCV} displays the diagram of convergence rates in the overparamterized case. While the regions $m, m^* \geq d$ and $m=m^* \leq d$ exhibit exponentially fast convergence, the third one ($m, d > m^*$) reveals a slower convergence. This discrepancy can be understood in terms of rank: in the two first case, we have that $\mathrm{rank}(W(t)W(t)^T) = \mathrm{rank}(Z^*)$ all along the flow. On the contrary, when $m, d > m^*$, then $W(t)W(t)^T$ still converges towards $Z^*$, which is now of lower rank. In this case the proof shows that the convergence is slowed down by the positive eigenvalues of $W(t)W(t)^T$ that go to zero as $t \to \infty$. 

Whenever $m^* < d$, a high overparameterization is not ideal in terms of convergence rate. In this case, this is the smallest overparameterization (exactly $m=m^*$) which gives the most efficient convergence. Obviously, the number of neurons of the teacher $m^*$ is not known in advance, and it is not clear how it can be inferred from the observations of the output of the predictors $u^*(x) = \tr \big(W^*W^{*T} xx^T \big)$ (see equation \eqref{eq:predictors}). 

Finally, we believe that the bounds derived in \cref{PropCVRate} are tight. As depicted in \cref{fig:CVrate}, in the case $m^* \leq m \leq d$ (left panel), the function $t^2 \L(W(t))$ stays bounded with time. Likewise, for the case $m=m^* \leq d$ (right panel), the loss $\L(W(t))$ follows the line drawn by $e^{-4 \mu t}$, where $\mu$ is the smallest non-zero eigenvalue of $Z^*$ (in log-scale on the $y$-axis). 
\begin{figure}[h]
    \centering
    \includegraphics[width=\linewidth]{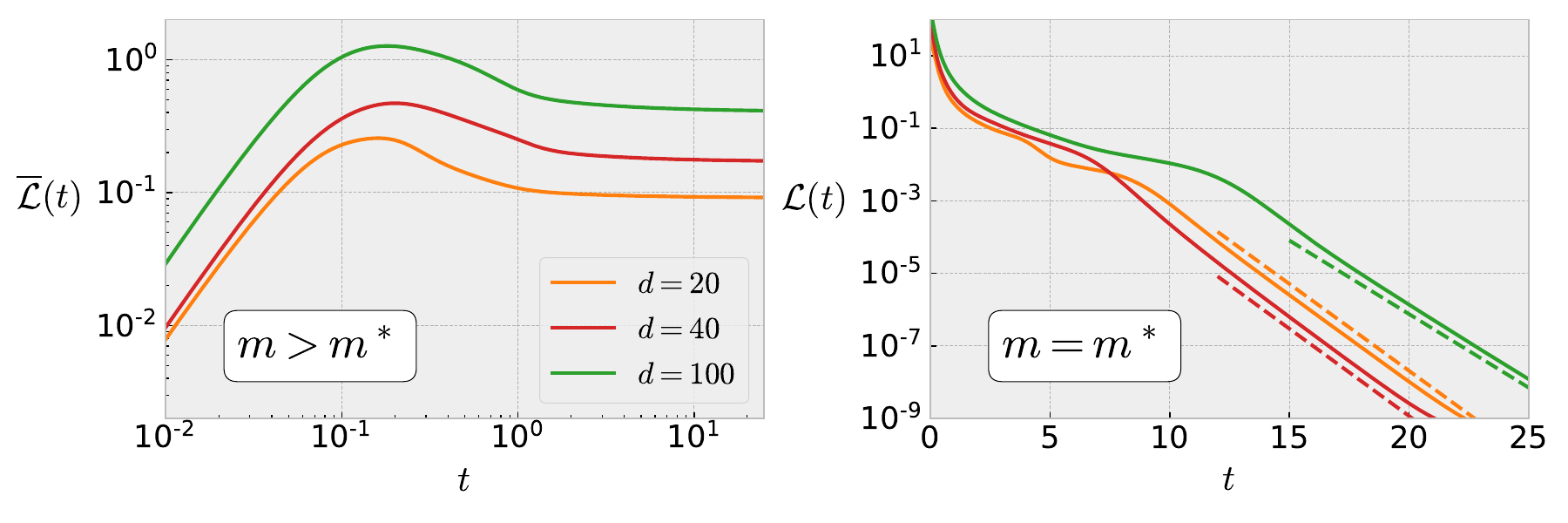}
    \vspace*{-0.5cm}  
    \caption{Evolution of the loss as a function of time. Left: $m = d/2$ and $m^* = d/4$, renormalized loss $\overline{\L}(t) = t^2 \L(W(t))$. Right: $m = m^* = d/2$, $\L(t) = \L(W(t))$ and dotted lines correspond to $\mathrm{cst} \times e^{-4 \mu t}$ where $\mu$ is the smallest non-zero eigenvalue of $Z^*$. The solution $W(t)$ was simulated using gradient descent with stepsize $\eta = 10^{-2}$. Results averaged over 5 simulations. }
    \label{fig:CVrate}
\end{figure}

\subsection{Limit Distribution for Product of Projections} \label{App:Manova}

We now give more detailed results on the limit distribution for products of projections. In \cref{sec:highdim}, we present a convergence result for the product of matrices $Y_d = U_d^{0T} U_d^* U_d^{*T} U_d^0\in \R^{m_d \times m_d}$, where $U_d^{0T}U_d^0 = I_{m_d}$ and $U_d^{*T} U_d^* = I_{m_d^*}$. We also assume that $U_d^0$ and $U_d^*$ are uniformly drawn under this constraint. This can be achieved by two different but equivalent ways. For $k \leq d$, the manifold $\mathcal{M}_{d, k} = \big\{ U \in \R^{d \times k}, \, U^TU = I_k \big\}$ can be endowed with a uniform measure, and $U_d^0, U_d^*$ can be respectively drawn from this measure on $\mathcal{M}_{d, m_d}$ and $\mathcal{M}_{d, m_d^*}$. Another equivalent way of drawing uniformly on $\mathcal{M}_{d, k}$ is by drawing $V \in \R^{d \times k}$ with i.i.d.~Gaussian coefficients $\N(0,1)$ and to let $U = V(V^TV)^{-1/2}$, so that, conditionally on the event $\{ V^TV \text{ is invertible}\}$ (which has probability one), $U$ is uniform on $\mathcal{M}_{d,k}$. Thus, in all the following, we make the assumption:
\begin{assumption} \label{AssumptionHighdim}
    The initial condition of the flow $W_d^0 \in \R^{d \times m_d}$ and the teacher matrix $W_d^* \in \R^{d \times m_d^*}$ verify:
    \begin{equation*}
        W_d^0 = \frac{1}{\sqrt{m_d}} U_d^0 ,\hspace{1.5 cm} W_d^* = \frac{1}{\sqrt{m_d^*}} U_d^*,
    \end{equation*}
    where the matrices $U_d^0$ and $U_d^*$ are uniformly drawn on $\mathcal{M}_{d, m_d}$ and $\mathcal{M}_{d, m_d^*}$ respectively, with:
    \begin{equation*}
        \frac{m_d}{d} \xrightarrow[d \to \infty]{} \alpha ,\hspace{1.5 cm} \frac{m_d^*}{d} \xrightarrow[d \to \infty]{} \alpha^*.
    \end{equation*}
\end{assumption}
This is the assumption under which the results of \cref{sec:highdim} are valid. As mentioned earlier, with probability one, the empirical spectral distribution of $Y_d$ weakly converges towards some probability measure as $d \to \infty$. \citet{manova}, \citet{aubrun}, and \citet{hiai} have shown the convergence of the empirical spectral distribution of the matrix $X_d = U_d^0 Y_d U_d^{0T} \in \mathcal{S}_d^+(\R)$. Informally, they obtain that for all $f : [0,1] \xrightarrow[]{} \R$ continuous: 
\begin{equation*}
    \frac{1}{d} \tr \big( f(X_d) \big) \xrightarrow[d \to \infty]{} \int f(x) \d \nu(x),
\end{equation*}
where: 
\begin{equation*}
    \d \nu(x) = \big( 1 - \min(\alpha, \alpha^*)) \delta_0(x) + \max(\alpha + \alpha^* - 1, 0) \delta_1(x) + \frac{\sqrt{(r_+-x)(x-r_-)}}{2 \pi x(1-x)} \1_{[r_-, r_+]}(x) \d x.
\end{equation*}
More precisely, \citet{manova} obtained a convergence in moments and in probability under general assumption on the matrices distribution, and \citet{hiai} derived a large deviation result whenever the matrices $U_d^*$, $U_d^0$ are drawn uniformly on the Stiefel manifold. 

To recover the measure defined in equation \eqref{eq:manova}, note that $X_d$ and $Y_d$ have the same non-zero eigenvalues (this is true only when $U_d^{0T} U_d^0 = I_{m_d}$). Thus, we have for $f : [0,1] \xrightarrow[]{} \R$ continuous: 
\begin{equation*}
\begin{aligned}
    \frac{1}{m_d} \tr \big( f(Y_d) \big) &= \frac{d}{m_d} \frac{1}{d} \tr \big( f(X_d) \big) - \frac{d-m_d}{m_d} f(0) \\
    & \xrightarrow[d \to \infty]{} \frac{1}{\alpha} \int f(x) \d \nu(x) - \frac{1 - \alpha}{\alpha} f(0).
\end{aligned}
\end{equation*}
Thus, the empirical spectral distribution of $Y_d$ converges as $d \to \infty$ towards a probability measure $\mu$ which satisfies $\alpha \mu = \nu - (1- \alpha) \delta_0$, which indeed corresponds to equation \eqref{eq:manova}. 

We now state a lemma which will be used in the proof of Propositions \ref{PropPhi} and \ref{PropOverlap}. It generalizes the previous convergence of the empirical spectral distribution of $Y_d$ for a compact family of functions. 
\begin{lemma} \label{lemma:largedev}
    Suppose that Assumption \ref{AssumptionHighdim} is verified and denote $Y_d = U_d^{0T} U_d^* U_d^{*T} U_d^0\in \R^{m_d \times m_d}$. Let $\mathcal{F} \subset \mathcal{C}([0,1], \R)$ be a compact set for the norm $\| \. \|_\infty$. Then, with probability one:
    \begin{equation*}
        \sup_{f \in \mathcal{F}} \left| \frac{1}{m_d} \tr \big( f(Y_d) \big) - \int f(x) \d \mu(x) \right| \xrightarrow[d \to \infty]{} 0.
    \end{equation*}
\end{lemma}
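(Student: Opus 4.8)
The plan is to upgrade the pointwise (in $f$) almost sure convergence of the empirical spectral distribution of $Y_d$ to a uniform statement over a compact family $\mathcal{F}$, via a standard $\varepsilon$-net argument combined with an equicontinuity estimate. First I would fix the almost sure event on which $\frac{1}{m_d}\tr(f(Y_d)) \to \int f \,\d\mu$ holds simultaneously for every $f$ in a fixed countable dense subset $\{f_i\}_{i\geq 1}$ of $\mathcal{C}([0,1],\R)$ — this event has probability one because it is a countable intersection of probability-one events, each supplied by the convergence result of \citet{manova} (or \citet{hiai}) recalled above, applied after the correction $\alpha\mu = \nu - (1-\alpha)\delta_0$. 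On this event, for every continuous $f$ one gets convergence by approximating $f$ uniformly by some $f_i$ and using that both $X\mapsto \frac1{m_d}\tr(f(X))$ and $f\mapsto\int f\,\d\mu$ are $1$-Lipschitz in $\|f\|_\infty$ (the trace functional because the eigenvalues of $Y_d$ lie in $[0,1]$; the integral because $\mu$ is a probability measure).

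Next, for the uniformity over $\mathcal{F}$, I would exploit precisely this $1$-Lipschitz dependence on $f$. Since $\mathcal{F}$ is compact in $(\mathcal{C}([0,1],\R),\|\cdot\|_\infty)$, for any $\varepsilon>0$ pick a finite $\varepsilon$-net $f^{(1)},\dots,f^{(N_\varepsilon)} \in \mathcal{F}$. For arbitrary $f\in\mathcal{F}$, choosing $j$ with $\|f-f^{(j)}\|_\infty\leq\varepsilon$, write
\begin{equation*}
\left| \tfrac{1}{m_d}\tr(f(Y_d)) - \int f\,\d\mu \right| \leq 2\varepsilon + \left| \tfrac{1}{m_d}\tr(f^{(j)}(Y_d)) - \int f^{(j)}\,\d\mu \right|,
\end{equation*}
so that $\sup_{f\in\mathcal{F}}\big|\cdots\big| \leq 2\varepsilon + \max_{1\leq j\leq N_\varepsilon} \big| \tfrac{1}{m_d}\tr(f^{(j)}(Y_d)) - \int f^{(j)}\,\d\mu \big|$. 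The maximum over the finite net tends to $0$ almost surely (a finite maximum of almost surely vanishing quantities), hence $\limsup_d \sup_{f\in\mathcal{F}}|\cdots| \leq 2\varepsilon$ almost surely; letting $\varepsilon\downarrow 0$ along a countable sequence gives the claim on a probability-one event.

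The argument is essentially soft, so the only genuine issue is bookkeeping about which almost sure event is used: one must be careful that the net points $f^{(j)}$ depend on $\varepsilon$ but not on $d$, and that a single probability-one event (namely, convergence for all members of a fixed countable dense set, equivalently for a countable union over $\varepsilon=1/n$ of all net points) suffices — this is where the $1$-Lipschitz reduction is doing the real work, since it lets the countable set be chosen once and for all, independently of $\mathcal{F}$. The potential subtlety I would flag is that the cited convergence results are stated for $f$ continuous on $[0,1]$ and in ``moments and in probability'' in \citet{manova}; to get almost sure convergence I would invoke the large deviation principle of \citet{hiai} (valid precisely under the uniform Stiefel sampling of Assumption \ref{AssumptionHighdim}), whose speed $d^2$ makes the convergence almost sure by Borel--Cantelli. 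No further obstacle is expected.
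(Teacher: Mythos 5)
Your argument is essentially the paper's: both combine an $\varepsilon$-net over the compact family $\mathcal{F}$ (using that $\tfrac{1}{m_d}\tr f(Y_d)$ and $\int f\,\d\mu$ are $1$-Lipschitz in $\|f\|_\infty$) with the large-deviation principle of \citet{hiai} at speed $d^2$ and Borel--Cantelli, the paper merely performing the net reduction before the probability estimate rather than after. The one step you elide — asserting the LDP "makes the convergence almost sure by Borel--Cantelli" — implicitly requires that the rate function have strictly positive infimum over the closed bad event; the paper supplies this via compactness of the space of probability measures on $[0,1]$, lower semicontinuity of the rate function, and the fact that it vanishes only at the limiting law $\mu$, so you would want to spell that out.
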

\begin{proof}
    We let $\eta > 0$. Since $\mathcal{F}$ is compact, we can find $f_1, \dots, f_n \in \mathcal{F}$ such that any element of $\mathcal{F}$ is at distance $\leq \eta / 4$ of one of the $f_i$'s. Thus: 
    \begin{equation*}
        \sup_{f \in \mathcal{F}} \left| \frac{1}{m_d} \tr \big( f(Y_d) \big) - \int f(x) \d \mu(x) \right| \leq \frac{\eta}{2} + \sup_{1 \leq i \leq n} \left| \frac{1}{m_d} \tr \big( f_i(Y_d) \big) - \int f_i(x) \d \mu(x) \right|,
    \end{equation*}
    and: 
    \begin{equation} \label{BoundProbaManova}
         \P \left(  \sup_{f \in \mathcal{F}} \left| \frac{1}{m_d} \tr \big( f(Y_d) \big) - \int f(x) \d \mu(x) \right| \geq \eta \right) \leq \P \left(  \sup_{1 \leq i \leq n} \left| \frac{1}{m_d} \tr \big( f_i(Y_d) \big) - \int f_i(x) \d \mu(x) \right| \geq \frac{\eta}{2} \right).
    \end{equation}
    We now use the large deviation result obtained by \citet{hiai}. We denote $\mu_d$ the empirical spectral distribution of $Y_d$ and for $\delta > 0$, we define: 
    \begin{equation*}
        E_\eta = \left\{ \nu \in \mathcal{M} \ \Big| \ \sup_{1 \leq i \leq n} \left| \int f_i(x) \d \nu(x) - \int f_i(x) \d \mu(x) \right| \geq \frac{\eta}{2} \right\},
    \end{equation*}
    where $\mu$ is defined in equation \eqref{eq:manova} and $\mathcal{M}$ denotes the space of probability measures over $[0,1]$. First, since the supremum is over a finite number of functions, $E_\eta$ is closed for the weak topology on $\mathcal{M}$. 
    
    Now \citet{hiai} state that: 
    \begin{equation*}
        \limsup_{d \to \infty} \frac{1}{d^2} \log \P(\mu_d \in E_\eta) \leq - \inf_{\nu \in E_\eta} I(\nu) \equiv - \beta_\eta,
    \end{equation*}
    where $I$ is positive, lower semi-continuous (with respect to the weak topology on $\mathcal{M}$), such that $I(\mu) = 0$ and $I > 0$ elsewhere. Remains to show that $\beta_\eta > 0$. By contradiction, suppose that there is $(\nu_p)_{p \in \mathbb{N}}$ a sequence of $E_\eta$ such that $I(\nu_p) \xrightarrow[p \to \infty]{} 0$. Since $\mathcal{M}$ is compact, one can extract a converging subsequence: $\nu_{\phi(p)} \xrightarrow[p \to \infty]{} \nu$. Thus, by lower semi-continuity of $I$: 
    \begin{equation*}
        I(\nu) \leq \liminf_{p \to \infty} I(\nu_{\phi(p)}) = 0,
    \end{equation*}
    and $\nu = \mu$. Since $E_\eta$ is closed for the weak topology, this implies that $\mu \in E_\eta$ which leads to a contradiction. Thus for $d$ large enough:
    \begin{equation*}
        \P \left(  \sup_{1 \leq i \leq n} \left| \frac{1}{m_d} \tr \big( f_i(Y_d) \big) - \int f_i(x) \d \mu(x) \right| \geq \frac{\eta}{2} \right)\leq \exp \left(- \frac{\beta_\eta d^2}{ 2} \right),
    \end{equation*}
    and using equation \eqref{BoundProbaManova}, we get that, for all $\eta > 0$: 
    \begin{equation*}
        \sum_{d \geq 0} \P \left(  \sup_{f \in \mathcal{F}} \left| \frac{1}{m_d} \tr \big( f(Y_d) \big) - \int f(x) \d \mu(x) \right| \geq \eta \right) < \infty,
    \end{equation*}
    and the conclusion using Borel-Cantelli lemma. 
\end{proof}

\vspace{0.3 cm}

\subsection{Overlap} \label{App:Overlap}

We study the overlap function in \cref{subsec:overlap}. In a general Euclidean space $E$, the overlap between two vectors $x,y \in E$ is given by:
\begin{equation*}
    \chi(x,y) = \frac{\big| \langle x,y \rangle \big|}{\big\|x\big\| \big\|y\big\|},
\end{equation*}
where $\| \. \|$ is the norm associated with the inner product on $E$. The overlap is a powerful scalar quantity measuring the alignment between two vectors. Indeed, Cauchy-Schwartz inequality ensures that $\chi(x,y) \leq 1$, and $\chi(x,y) = 1$ if and only if $x$ and $y$ are aligned. 

It is well known that if $E = \R^d$ and $x,y$ are independent and uniformly drawn on the sphere $\mathbb{S}^{d-1}$, then:
\begin{equation*}
    \chi(x,y) \underset{d \to \infty}{=} O \left( \frac{1}{\sqrt{d}} \right).
\end{equation*}
In this case, what we called the purely random overlap goes to zero in the high-dimensional limit. Throughout \cref{sec:highdim}, we studied the overlap between two matrices in $\mathcal{S}_d^+(\R)$:
\begin{equation*}
    \chi^{\mathrm{PSD}}(X,Y) = \frac{\tr(XY)}{\big\| X \big\|_F \big\| Y \big\|_F}.
\end{equation*}
For $X,Y \in \mathcal{S}_d^+(\R)$, we always have that $\tr(XY) \geq 0$. Note that we recover the $\R^d$ overlap with $\chi^{\mathrm{PSD}}(xx^T, yy^T) = \chi(x,y)^2$ for $x, y \in \R^d$. The first challenge is to determine the purely random overlap mentioned in the beginning of \cref{subsec:overlap}. In our special case, the goal is to study the overlap between the teacher and student along the flow: therefore, our purely random overlap will be defined as the one between the teacher and student at initialization. Those are independent random projection matrices of rank $m_d^*$ and $m_d$ respectively. We now show that this overlap admits a limit as $d \to \infty$, whenever $m_d, m_d^*$ grow linearly with the dimension. 
\begin{lemma}
    The overlap between the teacher matrix and the student at initialization admits the limit, with probability one: 
    \begin{equation*}
        \chi^\mathrm{PSD}(Z_d^0, Z_d^*) \xrightarrow[d \to \infty]{} \sqrt{\alpha \alpha^*}.
    \end{equation*}
\end{lemma}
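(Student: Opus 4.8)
The plan is to compute $\chi^{\mathrm{PSD}}(Z_d^0, Z_d^*) = \tr(Z_d^0 Z_d^*)/(\|Z_d^0\|_F \|Z_d^*\|_F)$ by expressing each of the three quantities in terms of the matrix $Y_d = U_d^{0T} U_d^* U_d^{*T} U_d^0$ and then applying the convergence of the empirical spectral distribution of $Y_d$ towards $\mu$ (equation \eqref{eq:manova}), or more precisely its uniform version. Since $Z_d^0 = W_d^0 W_d^{0T} = \frac{1}{m_d} U_d^0 U_d^{0T}$ and $Z_d^* = \frac{1}{m_d^*} U_d^* U_d^{*T}$, with $U_d^{0T}U_d^0 = I_{m_d}$ and $U_d^{*T}U_d^* = I_{m_d^*}$, the numerator is
\begin{equation*}
\tr(Z_d^0 Z_d^*) = \frac{1}{m_d m_d^*} \tr\big(U_d^0 U_d^{0T} U_d^* U_d^{*T}\big) = \frac{1}{m_d m_d^*} \tr(Y_d),
\end{equation*}
using the cyclic property of the trace. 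For the denominators, $\|Z_d^0\|_F^2 = \tr\big((Z_d^0)^2\big) = \frac{1}{m_d^2}\tr(U_d^0 U_d^{0T} U_d^0 U_d^{0T}) = \frac{1}{m_d^2}\tr(I_{m_d}) = \frac{1}{m_d}$, since $U_d^0$ is a projection (in the renormalized sense); similarly $\|Z_d^*\|_F^2 = \frac{1}{m_d^*}$. Hence
\begin{equation*}
\chi^{\mathrm{PSD}}(Z_d^0, Z_d^*) = \frac{\frac{1}{m_d m_d^*}\tr(Y_d)}{\frac{1}{\sqrt{m_d}}\cdot\frac{1}{\sqrt{m_d^*}}} = \frac{\tr(Y_d)}{\sqrt{m_d m_d^*}} = \sqrt{\frac{m_d}{m_d^*}}\cdot\frac{1}{m_d}\tr(Y_d).
\end{equation*}

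Next I would take $d \to \infty$. Applying \cref{lemma:largedev} with the singleton family $\mathcal{F} = \{\mathrm{id}\}$ (or directly the convergence-in-moments statement of \citet{manova}), we get $\frac{1}{m_d}\tr(Y_d) \to \int x \, \d\mu(x)$ almost surely. A short computation of this first moment of $\mu$ — integrating the continuous part against the $\beta$-Jacobi density and adding the atom at $1$ weighted by $\frac{1}{\alpha}(\alpha+\alpha^*-1)^+$ — gives $\int x\,\d\mu(x) = \alpha^*$. (This is consistent with the fact that $\tr(Y_d) = \tr(U_d^{0T}U_d^* U_d^{*T}U_d^0) = \|U_d^{*T}U_d^0\|_F^2$, whose expectation under the uniform/Stiefel law equals $\frac{m_d m_d^*}{d}$, so $\frac{1}{m_d}\tr(Y_d) \to \alpha^*$.) Combining with $m_d/m_d^* \to \alpha/\alpha^*$ yields
\begin{equation*}
\chi^{\mathrm{PSD}}(Z_d^0, Z_d^*) \xrightarrow[d\to\infty]{} \sqrt{\frac{\alpha}{\alpha^*}} \cdot \alpha^* = \sqrt{\alpha\alpha^*},
\end{equation*}
almost surely, which is the claim.

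The computation is essentially routine once the reduction to $\tr(Y_d)$ is in place; the only mild obstacle is evaluating $\int x\,\d\mu(x)$ from the explicit density in equation \eqref{eq:manova}, which I would sidestep by the direct second-moment identity $\tr(Y_d) = \|U_d^{*T}U_d^0\|_F^2$ together with a concentration argument (e.g. the same Borel–Cantelli/large-deviation scheme underlying \cref{lemma:largedev}, applied to $f = \mathrm{id}$) to upgrade convergence in expectation to almost-sure convergence. Care is needed only in the edge cases $\alpha^* = 1$ or $\alpha+\alpha^*>1$ where $\mu$ carries an atom at $1$, but the moment identity handles these uniformly.
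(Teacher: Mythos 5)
Your reduction to $\chi^{\mathrm{PSD}}(Z_d^0, Z_d^*) = \tr(Y_d)/\sqrt{m_d m_d^*}$ is exactly the one the paper uses, and your conclusion is correct, but the way you handle the limit is genuinely different. You invoke the empirical-spectral-distribution convergence of $Y_d$ through \cref{lemma:largedev} (itself resting on the large-deviation theorem of \citet{hiai}), apply it to $f=\mathrm{id}$, and then evaluate $\int x\,\d\mu(x)=\alpha^*$ — either by integrating the explicit Jacobi density plus the atom at $1$, or by the clean moment identity $\E[\tr(Y_d)] = m_d m_d^*/d$. The paper instead bypasses the spectral machinery entirely: it uses rotational invariance to take $U_d^* U_d^{*T}$ to be a fixed coordinate projection $\Pi_d$, writes $\chi_d^0 = (m_d m_d^*)^{-1/2}\sum_{k=1}^{m_d}\|\Pi_d(v_k)\|^2$ with $v_k$ uniform on the sphere, and applies the Dasgupta–Gupta concentration bound for projections of a uniform sphere vector together with a union bound and Borel–Cantelli. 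The paper's route is more elementary and self-contained (it needs only a scalar concentration inequality, not the full ESD limit of the Jacobi ensemble), which is aesthetically preferable for a lemma that is merely computing a baseline overlap; yours has the advantage of fitting into the same framework as the rest of \cref{sec:highdim} (and \cref{lemma:largedev} is needed later anyway), so it is not wasteful in context. One small caution: your parenthetical that the "convergence-in-moments statement of \citet{manova}" directly yields the almost-sure convergence is imprecise — moment convergence alone does not give almost-sure convergence; you need either \cref{lemma:largedev} or, as you say later, the Borel–Cantelli scheme. Since you do ultimately invoke those, the argument stands.
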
 \begin{proof} Due to the definition of $Z_d^*$ and $Z_d^0$, we have: 
    \begin{equation*}
        \chi^\mathrm{PSD}(Z_d^0, Z_d^*) =  \frac{\tr \big( U_d^{0T} U_d^* U_d^{*T} U_d^0 \big)}{\big\|U_d^0 U_d^{0T} \big\|_F \big\| U_d^* U_d^{*T} \big\|_F}. 
    \end{equation*}
    For the denominator, we obtain: 
    \begin{equation*}
        \big\|U_d^0 U_d^{0T} \big\|_F = \sqrt{m_d} , \hspace{2 cm} \big\|U_d^* U_d^{*T} \big\|_F = \sqrt{m_d^*}.
    \end{equation*}
    Due to the rotational invariance of the distribution of $U_d^* U_d^{*T}$, one can suppose without loss of generality that $U_d^* U_d^{*T}$ is the orthogonal projection onto the $m_d^*$ first vectors of the standard basis in $\R^d$, denoted $\Pi_d$. Thus, one can rewrite: 
    \begin{equation*}
        \chi^\mathrm{PSD}(Z_d^0, Z_d^*) = \frac{1}{\sqrt{m_d m_d^*}} \sum_{k=1}^{m_d} \big\| \Pi_d(v_k) \big\|^2,
    \end{equation*}
    with $(v_1, \dots, v_{m_d})$ orthonormal such that $U_d^0 U_d^{0T} = \sum_{k=1}^{m_d} v_kv_k^T$. Moreover, each $v_j$
 is uniform on the sphere. Thus, for $\epsilon > 0$, applying an union bound:
 \begin{equation*}
     \P \left( \left| \chi^\mathrm{PSD}(Z_d^0, Z_d^*) - \frac{\sqrt{m_d m_d^*}}{d} \right| \geq \epsilon \right) \leq m_d \, \P \left( \left| \big\| \Pi_d(v_k) \big\|^2 - \frac{m_d^*}{d} \right| \geq \epsilon \sqrt{\frac{m_d^*}{m_d}} \right).
 \end{equation*}
Following \citet{dasgupta}, we have, if $v$ is uniform on the sphere and for $\eta \in [0,1]$: 
\begin{equation*}
    \P \left( \left| \big\| \Pi_d(v) \big\|^2 - \frac{m_d^*}{d} \right| \geq \frac{m_d^*}{d} \eta \right) \leq 2 \exp \left( - \frac{m_d^* \eta^2}{4} \right),
\end{equation*}
which leads to, with probability one:
\begin{equation*}
    \left| \chi^\mathrm{PSD}(Z_d^0, Z_d^*) - \frac{\sqrt{m_d m_d^*}}{d} \right| \xrightarrow[d \to \infty]{} 0,
\end{equation*}
and the result. 
 \end{proof}

Note that the behaviour of this overlap is very different from the $\R^d$ case. The overlap between two independent vectors in $\R^d$ vanishes as $d \to \infty$. Here, due to the fact we consider a number of vectors growing with the dimension, we instead obtain a positive limit. 

Finally, we prove a claim done at the end of \cref{subsec:overlap}: the overlap reached by the gradient flow at timescale $t \gg d$ (see \cref{PropOverlap}) is the best we can get whenever the teachers are orthonormal (for a given number of students).

\begin{proposition} Suppose that $Z^* \in \mathcal{S}_d^+(\R)$ is an orthogonal projection matrix of rank $m^* \leq d$. Then, for $m \leq d$:
        \begin{equation*}
        \sup_{\substack{Z \in \mathcal{S}_d^+(\R) \\ \mathrm{rank}(Z) \leq m}} \frac{\tr(ZZ^*)}{\big\|Z\big\|_F \big\|Z^*\big\|_F} =  \min \left( \sqrt{\frac{m}{m^*}}, 1 \right).
    \end{equation*}
\end{proposition}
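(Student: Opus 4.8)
The plan is to reduce the matrix optimization to a one–parameter eigenvalue problem. First I would use that the objective is invariant under the scaling $Z \mapsto cZ$ for any $c>0$, so it suffices to maximize $\tr(ZZ^*)$ over $Z \in \mathcal{S}_d^+(\R)$ with $\mathrm{rank}(Z)\le m$ and $\|Z\|_F = 1$; moreover, since $Z^*$ is an orthogonal projection, $\|Z^*\|_F = \sqrt{\tr(Z^{*2})} = \sqrt{\tr(Z^*)} = \sqrt{m^*}$. Writing the spectral decomposition $Z = \sum_{i=1}^r \lambda_i u_i u_i^T$ with $\lambda_i > 0$, $r = \mathrm{rank}(Z) \le m$, and $(u_i)$ orthonormal, the normalization reads $\sum_i \lambda_i^2 = 1$, and, using that $Z^*$ is symmetric idempotent,
\begin{equation*}
\tr(ZZ^*) = \sum_{i=1}^r \lambda_i \, u_i^T Z^* u_i = \sum_{i=1}^r \lambda_i \, \|Z^* u_i\|^2 .
\end{equation*}

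Next I would apply Cauchy–Schwarz to bound this by $\big(\sum_i \lambda_i^2\big)^{1/2}\big(\sum_i \|Z^* u_i\|^4\big)^{1/2} = \big(\sum_i \|Z^* u_i\|^4\big)^{1/2}$. Since $Z^*$ is a projection, $\|Z^* u_i\|^2 \le \|u_i\|^2 = 1$, hence $\|Z^* u_i\|^4 \le \|Z^* u_i\|^2 = u_i^T Z^* u_i$, so
\begin{equation*}
\sum_{i=1}^r \|Z^* u_i\|^4 \;\le\; \sum_{i=1}^r u_i^T Z^* u_i \;=\; \tr(Z^* \Pi), \qquad \Pi := \sum_{i=1}^r u_i u_i^T,
\end{equation*}
where $\Pi$ is an orthogonal projection of rank $r \le m$. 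The key inequality is then the trace bound for two orthogonal projections: $\tr(Z^* \Pi) = \tr(Z^* \Pi Z^*) \le \tr(Z^*) = m^*$ because $\Pi \preceq I$ in the Loewner order, and symmetrically $\tr(Z^* \Pi) = \tr(\Pi Z^* \Pi) \le \tr(\Pi) = r \le m$, so $\tr(Z^*\Pi) \le \min(m,m^*)$. Combining the estimates gives $\tr(ZZ^*) \le \sqrt{\min(m,m^*)}$, hence the ratio is at most $\sqrt{\min(m,m^*)/m^*} = \min\!\big(\sqrt{m/m^*},\,1\big)$.

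For the matching lower bound I would simply exhibit an explicit maximizer: let $Z$ be the orthogonal projection onto any subspace of dimension $\min(m,m^*)$ contained in the range of $Z^*$. Then $Z^* Z = Z$, so $\tr(ZZ^*) = \tr(Z) = \min(m,m^*)$ and $\|Z\|_F = \sqrt{\min(m,m^*)}$, yielding the ratio $\sqrt{\min(m,m^*)/m^*} = \min\!\big(\sqrt{m/m^*},\,1\big)$, while $\mathrm{rank}(Z) = \min(m,m^*) \le m$ respects the constraint. Since the upper and lower bounds agree, the supremum is in fact attained and equals the claimed value. I do not expect a genuine obstacle here; the only points requiring a little care are the Cauchy–Schwarz/inequality bookkeeping and the projection trace bound $\tr(PQ)\le\min(\mathrm{rank}\,P,\mathrm{rank}\,Q)$, both of which are standard.
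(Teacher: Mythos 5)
Your proof is correct, and the core idea is the same as the paper's: take a spectral decomposition $Z = \sum_i \lambda_i u_i u_i^T$, write $\tr(ZZ^*) = \sum_i \lambda_i \|Z^*u_i\|^2$, exploit $Z^*$ being an orthogonal projection, and apply Cauchy--Schwarz, with an explicit maximizer (a projection onto a $\min(m,m^*)$-dimensional subspace of the range of $Z^*$) for the lower bound. The bookkeeping differs slightly. The paper first bounds $\|Z^*u_i\|^2 \le 1$ to get $\tr(ZZ^*) \le \sum_i \lambda_i$ and then applies Cauchy--Schwarz in the form $\sum_i \lambda_i \le \sqrt{m}\,(\sum_i \lambda_i^2)^{1/2}$; this only yields a useful bound when $m < m^*$, so the paper disposes of the case $m \ge m^*$ separately by taking $Z = Z^*$. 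You instead apply Cauchy--Schwarz directly to the weighted sum, getting $\tr(ZZ^*) \le (\sum_i \|Z^*u_i\|^4)^{1/2}$ after normalizing $\|Z\|_F=1$, then use $\|Z^*u_i\|^4 \le \|Z^*u_i\|^2$ and the trace bound $\tr(Z^*\Pi) \le \min(\mathrm{rank}\,Z^*, \mathrm{rank}\,\Pi)$ for two orthogonal projections. Your ordering handles both regimes $m \gtrless m^*$ in a single computation via the $\min$, at the modest cost of the extra $\|Z^*u_i\|^4 \le \|Z^*u_i\|^2$ step; both routes produce the same tight constant and the same case of equality.
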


\begin{proof}
    If $m \geq m^*$, taking $Z = Z^*$ leads to the result since the overlap is always smaller than one. Suppose that $m < m^*$ and take $Z \in \mathcal{S}_d^+(\R)$ of rank $\leq m$. We write: 
    \begin{equation*}
        Z = \sum_{k=1}^m \lambda_k v_kv_k^T \hspace{1.5 cm} Z^* = \sum_{k=1}^{m^*} v_k^* (v_k^*)^T,
    \end{equation*}
    so that: 
    \begin{equation*}
        \tr(ZZ^*) = \sum_{j=1}^m \lambda_j \sum_{k=1}^{m^*} \left( v_j^T v_k^* \right)^2.
    \end{equation*}
    The sum over $k$ corresponds to the norm of the projection of $v_j$ onto $\mathrm{Vect}(v_1^*, \dots, v_{m^*}^*)$, thus it is smaller than the norm of $v_j$, equal to one. Thus, since $\big\|Z^* \big\|_F = \sqrt{m^*}$
    \begin{equation*}
        \frac{\big| \tr(ZZ^*) \big|}{\big\|Z\big\|_F \big\|Z^*\big\|_F} \leq \frac{1}{\sqrt{m^*}} \frac{\sum_{j=1}^m \lambda_j}{\left( \sum_{j=1}^m \lambda_j^2 \right)^{1/2}},
    \end{equation*}
    and the upper bound using Cauchy-Schwarz inequality. The case of equality is reached if and only if $Z$ is of the form $Z = \lambda \sum_{k=1}^m v_kv_k^T$, with $(v_1, \dots, v_m)$ an orthonormal family of $\mathrm{Vect}(v_1^*, \dots, v_{m^*}^*)$. 
\end{proof}

\vspace{0.5 cm}

\section{Useful Lemmas} \label{App:Lemmas}

In this section we mention several lemmas that will be used throughout the proofs of the results in \cref{App:Proofs}.  
\begin{lemma} \label{lemma:traces}
    Let $A \in \mathcal{S}_d^+(\R)$. Then:
    \begin{equation*}
        \frac{1}{d} \big[\tr (A) \big]^2 \leq \tr(A^2) \leq \big[\tr (A) \big]^2.
    \end{equation*}
    Moreover, if $B \in \mathcal{S}_d(\R)$:
    \begin{equation*}
        \lambda_{min}(B) \tr(A) \leq \tr(AB) \leq \lambda_{max}(B) \tr(A).
    \end{equation*}
\end{lemma}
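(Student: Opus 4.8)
The plan is to reduce everything to the eigenvalues of $A$ by spectral decomposition. Write $A = \sum_{i=1}^d \lambda_i e_i e_i^T$ with $\lambda_1, \dots, \lambda_d \geq 0$ (using $A \in \mathcal{S}_d^+(\R)$) and $(e_1, \dots, e_d)$ an orthonormal basis of $\R^d$. Then $\tr(A) = \sum_i \lambda_i$ and $\tr(A^2) = \sum_i \lambda_i^2$, so the first chain of inequalities becomes the purely scalar statement $\frac{1}{d}\big(\sum_i \lambda_i\big)^2 \leq \sum_i \lambda_i^2 \leq \big(\sum_i \lambda_i\big)^2$ for nonnegative reals.

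For the left inequality I would apply the Cauchy--Schwarz inequality to the vectors $(\lambda_1, \dots, \lambda_d)$ and $(1, \dots, 1)$, giving $\big(\sum_i \lambda_i\big)^2 \leq d \sum_i \lambda_i^2$. For the right inequality I would simply expand $\big(\sum_i \lambda_i\big)^2 = \sum_i \lambda_i^2 + \sum_{i \neq j} \lambda_i \lambda_j \geq \sum_i \lambda_i^2$, where the cross terms are nonnegative precisely because all $\lambda_i \geq 0$ — this is the only place positive semidefiniteness of $A$ is needed here.

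For the second pair of inequalities, I would again use the spectral decomposition of $A$, together with the cyclicity of the trace: $\tr(AB) = \sum_i \lambda_i \tr(e_i e_i^T B) = \sum_i \lambda_i\, e_i^T B e_i$. Since $B \in \mathcal{S}_d(\R)$, the Rayleigh quotient bound gives $\lambda_{\min}(B) \leq e_i^T B e_i \leq \lambda_{\max}(B)$ for each unit vector $e_i$, and multiplying by $\lambda_i \geq 0$ and summing yields $\lambda_{\min}(B)\sum_i \lambda_i \leq \tr(AB) \leq \lambda_{\max}(B)\sum_i \lambda_i$, which is exactly $\lambda_{\min}(B)\tr(A) \leq \tr(AB) \leq \lambda_{\max}(B)\tr(A)$.

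There is no real obstacle in this lemma; the only point requiring a little care is to diagonalize $A$ (not $B$) when handling $\tr(AB)$, so that the coefficients $\lambda_i$ multiplying the Rayleigh quotients are guaranteed nonnegative. The whole argument is a short sequence of routine manipulations, and I would present it in essentially the order above.
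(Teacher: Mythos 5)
Your proof is correct: the spectral decomposition of $A$, Cauchy--Schwarz for the lower bound on $\tr(A^2)$, nonnegativity of the cross terms for the upper bound, and the Rayleigh quotient bound applied along the eigenbasis of $A$ (so that the weights $\lambda_i \geq 0$) for the $\tr(AB)$ inequalities are exactly the standard argument; the paper itself states this lemma without proof, treating it as elementary, and your write-up fills that in appropriately.
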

\begin{lemma}[Courant-Fischer min-max formula] \label{lemma:courantfischer}
    Let $A \in \mathcal{S}_d(\R)$ and denote $\lambda_1 \geq \dots \geq \lambda_d$ its eigenvalues. Then:
    \begin{equation*}
        \lambda_j = \max_{\substack{V \subset \R^d \\ \mathrm{dim}V = j}} \min_{\substack{x \in V \\ \|x\|=1}} x^T A x  \  = \min_{\substack{V \subset \R^d \\ \mathrm{dim}V = d-j+1}} \max_{\substack{x \in V \\ \|x\|=1}} x^T A x.
    \end{equation*}
\end{lemma}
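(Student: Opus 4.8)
The plan is to reduce to the diagonal case and then exploit a dimension count for subspace intersections. First I would fix an orthonormal eigenbasis $e_1,\dots,e_d$ of $A$ with $Ae_i=\lambda_ie_i$, so that every unit vector $x=\sum_i c_ie_i$ (with $\sum_ic_i^2=1$) satisfies $x^TAx=\sum_i\lambda_ic_i^2$, a convex combination of the $\lambda_i$. All the content then lies in controlling which coordinates $c_i$ a prescribed subspace is forced to activate.

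For the first identity, to obtain the lower bound I would plug in the specific subspace $V=\mathrm{span}(e_1,\dots,e_j)$: for a unit vector $x\in V$ one has $x^TAx=\sum_{i\le j}\lambda_ic_i^2\ge\lambda_j$, so the inner minimum over this $V$ is at least $\lambda_j$ (and equals $\lambda_j$, attained at $e_j$), hence the outer maximum is $\ge\lambda_j$. For the upper bound, let $V$ be an arbitrary $j$-dimensional subspace; since $\dim V+\dim\mathrm{span}(e_j,\dots,e_d)=j+(d-j+1)=d+1>d$, these two subspaces intersect in a nonzero vector, and after normalization it has the form $x=\sum_{i\ge j}c_ie_i$ with $x^TAx=\sum_{i\ge j}\lambda_ic_i^2\le\lambda_j$; thus the inner minimum over $V$ is $\le\lambda_j$ for every $V$, and so is the outer maximum. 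Combining the two bounds gives the first equality. The second identity I would deduce by applying the first one to $-A$ and reindexing ($\lambda_j$ becomes $-\lambda_{d-j+1}$, $\min$ and $\max$ swap roles), or directly by the symmetric argument: the choice $V=\mathrm{span}(e_j,\dots,e_d)$, of dimension $d-j+1$, yields the ``$\le$'' direction, and for ``$\ge$'' any $(d-j+1)$-dimensional $V$ meets $\mathrm{span}(e_1,\dots,e_j)$ nontrivially because $(d-j+1)+j>d$, producing a unit $x$ with $x^TAx\ge\lambda_j$.

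The single non-routine ingredient --- and the real crux of the argument --- is the elementary fact that two subspaces of $\R^d$ whose dimensions sum to strictly more than $d$ have nonzero intersection, which I would isolate as a one-line observation coming from $\dim(U+W)=\dim U+\dim W-\dim(U\cap W)\le d$. Beyond this I do not expect any difficulty: the Rayleigh quotient $x\mapsto x^TAx$ is continuous and each inner extremum runs over the compact unit sphere of a finite-dimensional subspace, so all the suprema and infima are attained and the comparisons above are legitimate.
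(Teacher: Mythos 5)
Your proof is correct and is the standard textbook argument for Courant--Fischer: diagonalize, pick the span of the top (or bottom) eigenvectors for one bound, and use the dimension-count $\dim U+\dim W>d\Rightarrow U\cap W\neq\{0\}$ for the other. The paper states this lemma as a classical fact and offers no proof of its own, so there is nothing to compare against; your write-up fills that gap cleanly and there is no flaw in it.
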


\begin{lemma} \label{lemma:asymptoticsIntegral}
    Let $\lambda > 0$ and $v : \R^+ \xrightarrow[]{} \R$ continuously differentiable such that $\dot{v}(t) \xrightarrow[t \to \infty]{} 0$. Then : 
    \begin{equation*}
        \int_0^t e^{\lambda s} e^{v(s)} \d s \underset{t \to \infty}{\sim} \frac{1}{\lambda} e^{\lambda t} e^{v(t)}.
    \end{equation*}
    Moreover, for all $\mu > 0$ :
    \begin{equation*}
        e^{- \mu t} \int_0^t e^{v(s)} \d s \xrightarrow[t \to \infty]{} 0.
    \end{equation*}
\end{lemma}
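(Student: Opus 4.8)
The plan is to treat the two assertions separately, both by elementary estimates on the integrand once the growth/decay of $e^{v(s)}$ is controlled by the hypothesis $\dot v(t) \to 0$.

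For the first claim, the idea is that $e^{v(s)}$ varies subexponentially, so the integral $\int_0^t e^{\lambda s} e^{v(s)}\,\d s$ is dominated by the endpoint, exactly as for $\int_0^t e^{\lambda s}\,\d s \sim \frac{1}{\lambda}e^{\lambda t}$. Concretely, fix $\varepsilon \in (0,\lambda)$. Since $\dot v(s) \to 0$, there is $T$ such that $|\dot v(s)| \leq \varepsilon$ for $s \geq T$; hence for $T \leq s \leq t$ one has $v(t) - v(s) \leq \varepsilon(t-s)$ and $v(t) - v(s) \geq -\varepsilon(t-s)$, i.e. $e^{-\varepsilon(t-s)} \leq e^{v(s)-v(t)} \leq e^{\varepsilon(t-s)}$. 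Write $\int_0^t = \int_0^T + \int_T^t$; the first piece is a fixed constant, negligible compared to $e^{\lambda t}e^{v(t)}$. For the second, factor out $e^{v(t)}$ and estimate
\begin{equation*}
  e^{-v(t)}\int_T^t e^{\lambda s} e^{v(s)}\,\d s = \int_T^t e^{\lambda s} e^{v(s)-v(t)}\,\d s,
\end{equation*}
which is squeezed between $\int_T^t e^{\lambda s} e^{-\varepsilon(t-s)}\,\d s$ and $\int_T^t e^{\lambda s} e^{\varepsilon(t-s)}\,\d s$. Both bounding integrals are elementary: $\int_T^t e^{\lambda s \pm \varepsilon(t-s)}\,\d s = e^{\pm \varepsilon t}\int_T^t e^{(\lambda \mp \varepsilon)s}\,\d s \sim \frac{1}{\lambda \mp \varepsilon} e^{\lambda t}$ as $t \to \infty$ (here $\lambda \mp \varepsilon > 0$). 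Therefore
\begin{equation*}
  \frac{1}{\lambda+\varepsilon} \leq \liminf_{t\to\infty} e^{-\lambda t} e^{-v(t)} \int_0^t e^{\lambda s}e^{v(s)}\,\d s \leq \limsup_{t\to\infty} e^{-\lambda t} e^{-v(t)} \int_0^t e^{\lambda s}e^{v(s)}\,\d s \leq \frac{1}{\lambda-\varepsilon},
\end{equation*}
and letting $\varepsilon \to 0$ gives the claimed equivalence. (Alternatively, one could invoke L'Hôpital's rule on the ratio $\big(\int_0^t e^{\lambda s}e^{v(s)}\,\d s\big)/\big(\tfrac1\lambda e^{\lambda t}e^{v(t)}\big)$, whose derivative quotient is $\big(1 + \dot v(t)/\lambda\big)^{-1} \to 1$; but the squeeze argument avoids worrying about the denominator being eventually monotone.)

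For the second claim, the key point is that $e^{v(s)}$ grows strictly slower than any exponential $e^{\delta s}$. Fix $\mu > 0$ and pick $\varepsilon \in (0,\mu)$; choose $T$ so that $|\dot v(s)| \leq \varepsilon$ for $s \geq T$, whence $v(s) \leq v(T) + \varepsilon(s-T)$ for $s \geq T$. Then for $t \geq T$,
\begin{equation*}
  e^{-\mu t}\int_0^t e^{v(s)}\,\d s = e^{-\mu t}\int_0^T e^{v(s)}\,\d s + e^{-\mu t}\int_T^t e^{v(s)}\,\d s \leq e^{-\mu t} C_T + e^{-\mu t} e^{v(T)-\varepsilon T}\int_T^t e^{\varepsilon s}\,\d s,
\end{equation*}
where $C_T = \int_0^T e^{v(s)}\,\d s$. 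The first term vanishes as $t \to \infty$, and the second is bounded by a constant times $e^{-\mu t} e^{\varepsilon t} = e^{-(\mu-\varepsilon)t} \to 0$ since $\mu - \varepsilon > 0$. This proves $e^{-\mu t}\int_0^t e^{v(s)}\,\d s \to 0$.

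The only mild subtlety, hence the ``main obstacle'' if any, is making the endpoint-domination rigorous in the first part: one must handle the fixed initial segment $\int_0^T$ separately (it is $o(e^{\lambda t}e^{v(t)})$ because $v(t) \geq v(T) - \varepsilon(t-T)$ forces $e^{\lambda t}e^{v(t)} \gtrsim e^{(\lambda-\varepsilon)t}\to\infty$) and to track that the two-sided bounds close up as $\varepsilon \to 0$; everything else is a routine exponential integral computation.
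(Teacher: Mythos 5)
Your proof is correct. For the second assertion it is essentially the paper's argument: split at a time $T$ beyond which $v(s)\leq v(T)+\varepsilon(s-T)$ with $\varepsilon<\mu$, and bound the tail by an explicit exponential integral. For the first assertion, however, you take a genuinely different route: the paper integrates by parts, writing $\int_0^t e^{\lambda s}e^{v(s)}\,\d s = \frac{1}{\lambda}\big[e^{\lambda s}e^{v(s)}\big]_0^t - \frac{1}{\lambda}\int_0^t \dot v(s)e^{\lambda s}e^{v(s)}\,\d s$ and observing that, since $\dot v\to 0$ and the integral diverges, the last term is negligible compared with the integral itself, which yields the equivalence in two lines. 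You instead sandwich $e^{v(s)-v(t)}$ between $e^{\mp\varepsilon(t-s)}$ on $[T,t]$, compute the two explicit exponential integrals, and let $\varepsilon\to 0$; you also correctly dispose of the initial segment $\int_0^T$ by noting $e^{\lambda t}e^{v(t)}\gtrsim e^{(\lambda-\varepsilon)t}\to\infty$. Your squeeze argument is more elementary (no integration by parts, no self-referential "negligible relative to the quantity being estimated" step) at the price of some $\varepsilon$-bookkeeping, while the paper's version is shorter; both establish exactly the stated asymptotics, and your parenthetical remark about avoiding L'H\^opital's monotonicity hypothesis is a fair observation but not needed.
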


\begin{proof}
    The condition on $v$ implies that the first integral diverges as $t \to \infty$. Integrating by parts:
    \begin{equation*}
    \int_0^t e^{\lambda s} e^{v(s)} \d s  = \frac{1}{\lambda} \Big[ e^{\lambda s} e^{v(s)}\Big]^t_0 - \frac{1}{\lambda} \int_0^t \dot{v}(s) e^{\lambda s} e^{v(s)} \d s.
    \end{equation*}
    Due to the assumption on $v$, the second integral is negligible with respect to the first, which gives the first result. For the second claim, let $\epsilon \in ]0, \mu[$ and $t_0$ such that $v(t) \leq \epsilon t$ for $t \geq t_0$. Then, splitting the integral:
    \begin{equation*}
        e^{- \mu t} \int_0^t e^{v(s)} \d s \leq e^{- \mu t} \int_0^{t_0} e^{v(s)} \d s + \frac{1}{\epsilon} e^{-(\mu - \epsilon)(t-t_0)} \xrightarrow[t \to \infty]{} 0.
    \end{equation*}
\end{proof}

\newpage
\section{Proofs of the Main Results} \label{App:Proofs}

\subsection{Proof of Proposition \ref{PropCV}} \label{ProofCV}

We now prove the convergence result stated in \cref{PropCV}. As mentioned earlier, we make use of the stable manifold theorem \citep[see][]{SMT}: under the assumption that the initialization of the flow $W^0$ is drawn from a distribution which is absolutely continuous with respect to the Lebesgue measure on $\R^{d \times m}$, and since the loss function $\mathcal{L}$ defined in equation \eqref{eq:loss} is analytic in the coefficients of $W$, then with probability one (with respect to the initialization) the solution of the gradient flow \eqref{eq:flow} converges towards a local minimizer of $\mathcal{L}$, i.e., a point $W \in \R^{d \times m}$ satisfying :
\begin{equation*}
    \nabla \L(W) = 0 \hspace{1 cm} \text{and} \hspace{1 cm} \tr \big( \d^2L_W(K)K^T) \geq 0,
\end{equation*}
for all $K \in \R^{d \times m}$. \cref{PropCV} is proven as follows:
\begin{itemize}
    \item In \cref{lemma:criticalpoints}, we determine the critical points of the loss, i.e., the $W \in \R^{d \times m}$ such that $\nabla \L(W) = 0$.
    \item In \cref{lemma:rankdeficient}, we make use of the structure of the loss and show that a rank deficient local minimizer necessarily leads to an optimal predictor (i.e.,  $WW^T = Z^*$). 
    \item Finally, we determine the local minimizer of the loss.
\end{itemize}

\begin{lemma} \label{lemma:criticalpoints}
    Let $W \in \R^{d \times m}$ be a critical point of $\L$. Then there exists $I \subset \llbracket 1 , d \rrbracket$ of size $\leq \min(m,d)$ such that: 
    \begin{equation*}
        WW^T = \sum_{k \in I} (\mu_k + \tau) v_k^* (v_k^*)^T,
    \end{equation*}
    with:
    \begin{equation*}
        \tau = \frac{1}{2 + |I|}  \sum_{\substack{ 1 \leq k \leq m^* \\ k \notin I}} \mu_k.
    \end{equation*}    
\end{lemma}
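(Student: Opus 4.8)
The plan is to compute the gradient of $\L$ explicitly, set it to zero, and then exploit the resulting algebraic relations together with the spectral decomposition of $Z^*$. From equation \eqref{eq:flow} we know $\nabla \L(W) = 2\Delta_W W + \tr(\Delta_W) W$, where $\Delta_W = WW^T - Z^*$. Setting this to zero gives $\bigl(2\Delta_W + \tr(\Delta_W) I_d\bigr) W = 0$, hence, multiplying on the right by $W^T$, the symmetric matrix $Z := WW^T$ satisfies $\bigl(2\Delta_W + \tr(\Delta_W) I_d\bigr) Z = 0$, i.e. $2 Z^2 + \bigl(\tr(\Delta_W) - 2\bigr) \cdots$ — more precisely $2(Z - Z^*)Z + \tr(Z - Z^*) Z = 0$. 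The first step is therefore to rearrange this into $Z\bigl(2Z + (\tr(Z)-\tr(Z^*)) I_d\bigr) = 2 Z^* Z$, and to observe that the right-hand side is symmetric, so $Z^* Z = Z Z^*$; thus $Z$ and $Z^*$ commute and can be simultaneously diagonalized.

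Next I would diagonalize in the eigenbasis $v_1^*, \dots, v_{m^*}^*$ of $Z^*$ completed into an orthonormal basis of $\R^d$. Writing $Z = \sum_j \lambda_j e_j e_j^T$ in a common eigenbasis $(e_j)$, with $Z^* e_j = \mu'_j e_j$ (where $\mu'_j \in \{\mu_1,\dots,\mu_{m^*}\}$ or $0$), the stationarity condition becomes, coordinatewise, $\lambda_j \bigl(2\lambda_j - 2\mu'_j + \tr(Z) - \tr(Z^*)\bigr) = 0$ for each $j$. So for each $j$ either $\lambda_j = 0$ or $\lambda_j = \mu'_j + \tau$ with $\tau := \tfrac12\bigl(\tr(Z^*) - \tr(Z)\bigr)$ a single scalar shared by all active coordinates. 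Let $I$ be the set of indices $j$ with $\lambda_j \neq 0$; since $\lambda_j > 0$ requires $\mu'_j + \tau > 0$, and the rank of $Z$ is at most $\min(m,d)$, we get $|I| \le \min(m,d)$. Two further points need care: first, that the active indices must lie in the support of $Z^*$ unless $\tau$ itself is positive — I should argue that any active index with $\mu'_j = 0$ forces $\lambda_j = \tau$, which can be absorbed consistently, but the cleanest route (and the one matching the statement) is to note $I \subset \llbracket 1, m^*\rrbracket$ can be assumed because the $\tau$ appearing in the final formula only sums the $\mu_k$ with $k\notin I$; so I'd restrict attention to $I$ indexing eigenvectors of $Z^*$ with possibly-nonzero eigenvalue. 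Second, I must solve the self-consistency for $\tau$: summing $\lambda_j = \mu'_j + \tau$ over $j \in I$ gives $\tr(Z) = \sum_{k\in I}\mu_k + |I|\tau$, while $\tr(Z^*) = \sum_{k=1}^{m^*}\mu_k$, and plugging into $\tau = \tfrac12(\tr(Z^*) - \tr(Z))$ yields $2\tau = \sum_{k\notin I}\mu_k - |I|\tau$, i.e. $\tau = \tfrac{1}{2+|I|}\sum_{k\notin I, \, k\le m^*}\mu_k$, exactly as claimed.

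The main obstacle I anticipate is not any single computation but rather the bookkeeping around degenerate eigenvalues of $Z^*$ and around which common eigenbasis to pick: since $Z$ and $Z^*$ only commute, within an eigenspace of $Z^*$ of dimension $>1$ the matrix $Z$ can be any positive semidefinite operator on that eigenspace, and one has to argue that the stationarity condition still forces each of its eigenvalues to be either $0$ or $\mu_k + \tau$. This follows because the condition $Z(2Z + cI)Z$-type relation is basis-independent within the eigenspace and the quadratic $\lambda(2\lambda + c) = 0$ has only the two roots $0$ and $-c/2$; so one does get a well-defined $I$ (a multiset of eigenvalue-labels with multiplicity), and the formula is unaffected. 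I would state this carefully but not belabor it. The remaining steps — verifying $Z$ has the displayed outer-product form and that the rank bound $|I| \le \min(m,d)$ holds — are then immediate from $Z = WW^T$ and $\mathrm{rank}(Z) \le \min(m,d)$.
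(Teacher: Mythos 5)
Your proof is correct and follows essentially the same route as the paper: deduce that $Z=WW^T$ commutes with $Z^*$ (the paper does this via $\nabla\L(W)W^T - W\nabla\L(W)^T=0$, you via the symmetry of $2Z^*Z = 2Z^2 + \tr(\Delta_W)Z$, which is the same fact), simultaneously diagonalize, read off $\lambda_j\in\{0,\,\mu_j'+\tau\}$, and close the self-consistency for $\tau$. One small remark: your detour about restricting $I$ to $\llbracket 1,m^*\rrbracket$ is unnecessary and not what the statement says — indices $k>m^*$ with $\mu_k=0$ may genuinely lie in $I$ (contributing $\lambda_k=\tau$ when $\tau>0$), and the formula $\tau = \tfrac{1}{2+|I|}\sum_{k\notin I,\,k\le m^*}\mu_k$ already accounts for this since the zero eigenvalues drop out of $\tr(Z^*)-\sum_{k\in I}\mu_k$.
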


\begin{proof}
    Let $W \in \R^{d \times m}$ such that $\nabla \L(W) = 0$, that is:
    \begin{equation*}
        2 \big(Z^* - WW^T \big) W + \tr \big(Z^* - WW^T \big) W = 0.
    \end{equation*}
    Computing $\nabla \L(W) W^T - W \nabla \L(W)^T = 0$, we obtain that $Z^*$ and $WW^T$ commute, therefore we can write a singular value decomposition of the form:
    \begin{equation*}
        W = \sum_{k=1}^d \sqrt{\lambda_k} v_k^* w_k^T,
    \end{equation*}
    where $\lambda_1, \dots, \lambda_d \geq 0$ and if $m \leq d$, at most $m$ of them are non-zero. Setting $\tau = \frac{1}{2} \tr \big(Z^* - WW^T \big)$, the constraint $\nabla \L(W) = 0$ writes:
    \begin{equation*}
        \sum_{k=1}^d \sqrt{\lambda_k} \big( \lambda_k - \mu_k - \tau) v_k^* w_k^T = 0.
    \end{equation*}
    Thus, if $\lambda_k > 0$, necessarily $\lambda_k = \mu_k + \tau$. Defining $I = \{ k \in \llbracket 1, d \rrbracket, \lambda_k > 0 \}$ (which is of size $\leq \min(m,d)$) leads to the first claim. For the expression of $\tau$, simply remark that:
    \begin{equation*}
        \tau = \frac{1}{2} \tr \big(Z^* - WW^T \big) = \frac{1}{2} \left( \tr(Z^*) - \sum_{k \in I} (\mu_k + \tau) \right) = \frac{1}{2} \sum_{\substack{ 1 \leq k \leq m^* \\ k \notin I}} \mu_k - \frac{1}{2} |I| \tau,
    \end{equation*}
    which gives the desired result.
\end{proof}

We now state a more general lemma which bears on the functions of the form $L(W) = F(WW^T)$ where $F$ is convex. This naturally includes our setup since for the loss defined in equation \eqref{eq:loss}, the function $F$ is quadratic with a positive-semidefinite Hessian, hence convex. 
\begin{lemma} \label{lemma:rankdeficient}
    Let $L : \R^{d \times m} \xrightarrow[]{} \R$ such that $L(W) = F(WW^T)$ for all $W \in \R^{d \times m}$ with $F : \mathcal{S}_d(\R) \xrightarrow[]{} \R$ convex and twice continuously differentiable. Suppose that $W \in \R^{d \times m}$ is a local minimizer of $L$. Then, if $\mathrm{rank}(W) < m$, $WW^T$ is a global minimizer of $F$ over the space $\mathcal{S}_d^+(\R)$.
\end{lemma}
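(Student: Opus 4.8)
The plan is to exploit the first-order optimality of $W$ as a local minimizer of $L$ together with convexity of $F$. Write $Z = WW^T \in \mathcal{S}_d^+(\R)$ and let $G = \nabla F(Z) \in \mathcal{S}_d(\R)$. Differentiating $L(W) = F(WW^T)$ gives $\nabla L(W) = 2\,\nabla F(WW^T)\, W = 2 G W$, so the critical point equation $\nabla L(W) = 0$ reads $GW = 0$, hence $G Z = G W W^T = 0$ and also $Z G = 0$. Since $F$ is convex, to show that $Z$ globally minimizes $F$ over $\mathcal{S}_d^+(\R)$ it suffices to show that for every $Z' \in \mathcal{S}_d^+(\R)$ we have $\langle G, Z' - Z\rangle \geq 0$; because $GZ = 0$ this reduces to $\langle G, Z'\rangle = \tr(G Z') \geq 0$ for all $Z' \in \mathcal{S}_d^+(\R)$, i.e. $G \succeq 0$. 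So the whole proof comes down to proving that $G = \nabla F(WW^T)$ is positive semidefinite.

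The semidefiniteness of $G$ is where the rank deficiency hypothesis enters, and this is the main obstacle. Since $\mathrm{rank}(W) < m$, there is a nonzero vector $u \in \R^m$ with $Wu = 0$. For any $x \in \R^d$, consider the perturbation $K = x u^T \in \R^{d \times m}$, so that $KW^T = 0$ and $W K^T = 0$, and $KK^T = \|u\|^2 xx^T$. The plan is to compute the second-order optimality condition $\tr\!\big(\d^2 L_W(K) K^T\big) \geq 0$ for this $K$. Expanding $L(W+tK) = F\big((W+tK)(W+tK)^T\big)$ and using $WK^T = KW^T = 0$, one finds $(W+tK)(W+tK)^T = WW^T + t^2 KK^T$, so the $t^2$ coefficient of $L(W+tK)$ is exactly $\langle \nabla F(WW^T), KK^T\rangle = \|u\|^2\, \langle G, xx^T\rangle = \|u\|^2\, x^T G x$ (there is no first-order term in $t$, and the only contribution at order $t^2$ comes from the $KK^T$ term entering $F$ linearly, plus a Hessian term $\d^2 F$ applied to first-order increments which vanish here). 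Hence the local minimizer condition forces $x^T G x \geq 0$ for all $x \in \R^d$, i.e. $G \succeq 0$, as desired.

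I would then assemble the pieces: from $G \succeq 0$ and $GZ = 0$ we get $\tr(G Z') = \tr\big(G(Z' - Z)\big) + \tr(GZ) = \tr\big(G(Z'-Z)\big)$, and $\tr(GZ') \geq 0$ since both $G$ and $Z'$ are PSD; combined with convexity of $F$, $F(Z') \geq F(Z) + \langle \nabla F(Z), Z' - Z\rangle = F(Z) + \tr(GZ') \geq F(Z)$ for every $Z' \in \mathcal{S}_d^+(\R)$, so $Z = WW^T$ is a global minimizer of $F$ on $\mathcal{S}_d^+(\R)$. The one slightly delicate point to get right is the second-order expansion: I must be careful that the Hessian term $\d^2 F_{WW^T}(WK^T + KW^T, \cdot)$ genuinely drops out because $WK^T + KW^T = 0$ for our chosen $K$, leaving only the term where $F$ "sees" $KK^T$ through its gradient. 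This is exactly the place where choosing $K$ supported on the kernel of $W$ is essential, and it is what makes the rank-deficiency hypothesis do its work.
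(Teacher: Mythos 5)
Your proposal is correct and follows essentially the same approach as the paper: the same choice of perturbation $K = xu^T$ with $u$ in the kernel of $W$, the same deduction that $\nabla F(WW^T) \succeq 0$ from second-order optimality, and the same use of $\nabla F(WW^T)W = 0$ together with convexity of $F$ to conclude global optimality over $\mathcal{S}_d^+(\R)$. The only difference is presentational — you Taylor-expand $L(W+tK)$ directly instead of first writing out the general Hessian formula — and that is immaterial.
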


\begin{proof}
    The gradient and Hessian of $L$ can be deduced from the ones of $F$:
    \begin{equation*}
    \begin{aligned}
        \nabla L(W) &= 2 \nabla F(WW^T)W \\
       \tr \big( \d^2 L_W(K)K^T \big) &= 2 \tr \big( \nabla F(WW^T)KK^T \big) + \tr \left( \d^2 F_{WW^T}(WK^T + KW^T) \big(WK^T + KW^T) \right).
    \end{aligned}
    \end{equation*}
    If $W \in \R^{d \times m}$ is a local minimizer of $L$, then $\nabla F(WW^T)W = 0$ and $\tr \big( \d^2 L_W(K)K^T \big) \geq 0$ for all $K \in \R^{d \times m}$. If $\mathrm{rank}(W) < m$, then there is a non-zero $v \in \R^m$ such that $Wv = 0$. Let $u \in \R^d$ and evaluate the second equation at $K = uv^T$, so that $WK^T$ and $KW^T$ are zero. Thus, we get that $\tr \big( \nabla F(WW^T) uu^T) \geq 0$ so that $\nabla F(WW^T) \in \mathcal{S}_d^+(\R)$ (it is symmetric since $F$ is a function on $\mathcal{S}_d(\R)$). Since $F$ is convex, we have, for any $S \in \mathcal{S}_d^+(\R)$:
    \begin{equation*}
    \begin{aligned}
        F(S) &\geq F(WW^T) + \tr \big( \nabla F(WW^T) (S - WW^T) \big)\\
        &= F(WW^T) + \underbrace{ \tr \big( \nabla F(WW^T) S \big)}_{\textstyle \geq 0},
    \end{aligned}
    \end{equation*}
    where we used that $\nabla F(WW^T)W = 0$. Thus $WW^T$ is a global minimizer of $F$ over $\mathcal{S}_d^+(\R)$.  
\end{proof}

We can apply this result to our case, with $F(Z) = \frac{1}{2} \big\| Z-Z^* \big\|_F^2 + \frac{1}{4} \tr \big(Z-Z^*)^2$. Note that the only global minimizer of $F$ on $\mathcal{S}_d^+(\R)$ is $Z = Z^*$. 

We are now ready to prove \cref{PropCV}. Let $W \in \R^{d \times m}$ be a local minimizer of the loss $\L$. Already if $m > d$, then $\mathrm{rank}(W) < m$ and necessarily $WW^T = Z^*$ by the previous result, so we can assume that $m \leq d$. Likewise; if $m \geq m^*$ and $\mathrm{rank}(W) < m$, we also get that $WW^T = Z^*$. Finally, for $m < m^*$, such an equality is not possible since $\mathrm{rank}(Z^*) = m^*$ so we necessarily have $\mathrm{rank}(W) = m$. 

We now write the local minimizer condition $\tr \big( \d^2 \L_W(K)K^T \big) \geq 0$:
\begin{equation*}
    \tr \big( (WW^T - Z^*) KK^T \big) - \tau \tr(KK^T) + \tr(KW^TWK^T) + \tr(KW^TKW^T) + \tr(WK^T)^2 \geq 0,
\end{equation*}
where $\tau$ is defined in \cref{lemma:criticalpoints}. Reusing the notations of this lemma, we take $j \in I$ (i.e $\lambda_j = \mu_j + \tau > 0$) and $k \in \llbracket 1, d \rrbracket$. Evaluating the previous equation with $K = v_k^*w_j^T$, we get:
\begin{equation*}
    \lambda_k + \lambda_j + 2 \lambda_j \delta_{j,k} \geq \mu_k + \tau.
\end{equation*}
If $j \neq k$, then replacing $\lambda_j = \mu_j + \tau$, we have that $\lambda_k \geq \mu_k - \mu_j$. By the assumption on the eigenvalues of $Z^*$ ($\mu_1 > \dots > \mu_{m^*} > 0$), taking $k < j$ (if possible), we obtain that $\lambda_k > 0$ thus $k \in I$ by definition of $I$. Therefore, whenever $j \in I$, we have that $\llbracket 1, j \rrbracket \subset I$. Since $|I| = \mathrm{rank}(W) = m$ by assumption, necessarily $I = \llbracket 1, m \rrbracket$. For $m \leq m^*$, this implies the desired by the expression of $\tau$ (obviously $\tau = 0$ for $m=m^*$). For $m > m^*$, we also obtain that $\tau = 0$ and we cannot have $I = \llbracket 1, m \rrbracket$ since $Z^*$ have only $m^*$ non-zero eigenvalues. In this case, any local minimizer of the loss must satisfy $WW^T = Z^*$.

\subsection{Proof of Proposition \ref{PropCVRate}} \label{ProofCVRate}

Following the previous result, we now determine the convergence rates associated with the convergence of the flow in the case $m \geq m^*$. As explained in \cref{sec:dynamics}, the understanding of the function $\psi$ defined in \cref{Prop:Analytic} is essential to the determination of the dynamics. Under the assumption that $W(t)W(t)^T \xrightarrow[t \to \infty]{} Z^*$, which happens with probability one when the flow is correctly initialized, we have:
\begin{equation} \label{eq:C2behavepsi}
    \psi(t) = \int_0^t \tr \big(W(s)W(s)^T \big) \d s \underset{t \to \infty}{\sim} \tr(Z^*) t.
\end{equation}
The first challenge will be to gather more information about $\psi$, using equation \eqref{eq:selfpsi}:
\begin{equation} \label{eq:selfpsi2}
    \psi(t) = \frac{1}{4} \tr \log \left( I_m + 4  W^{0T} \int_0^t e^{-2 \psi(s)} e^{2s \tr(Z^*)} e^{4Z^*s} \d s \  W^0 \right).
\end{equation}
We will denote $E^*(t) = \int_0^t e^{-2 \psi(s)} e^{2s \tr(Z^*)} e^{4Z^* s} \d s \in \R^{d \times d}$. The proof is done as follows:
\begin{itemize}
    \item In \cref{lemma:eigenvalues}, we give an asymptotic behaviour of the eigenvalues of the matrix $W^{0T} E^*(t) W^0 \in \R^{m \times m}$ as $t \to \infty$.
    \item In \cref{lemma:psibehaviour}, we obtain an asymptotic development of $\psi$ up to the precision $o(1)$.
    \item We finally prove \cref{PropCVRate} by using the differential equation on $Z(t) = W(t)W(t)^T$:
    \begin{equation*}
        \dot{Z} = Z \tr(Z^*-Z)Z + 2Z^*Z + 2ZZ^* - 4Z^2.
    \end{equation*}
\end{itemize}
Our results are verified under Assumption \ref{AssumptionInit}. As mentioned before, for $m \geq m^*$, we have $W(t)W(t)^T \longrightarrow Z^*$ as $t \to \infty$ with probability one. Moreover, almost surely, any subfamily of $(W ^{0T} v_1^*, \dots, W^{0T} v_d^*)$ with size $\leq \min(m,d)$ is linearly independent in $\R^m$ (we say that this family is in general position, and this occurs almost surely as soon as assumption \ref{AssumptionInit} is verified and $W^0$ is drawn independently from $Z^*$). We restrict ourselves to this event from now on, to avoid sets of zero probability.

In order to determine the behaviour of $\psi$, it is important to understand how the eigenvalues of $W^{0T} E^*(t) W^0$ evolve as $t \to \infty$. This is done in the following lemma: the core idea is that the eigenvalues of $E^*$ (directly related to those of $Z^*$) are well separated at long timescales, even if we have little information on $\psi$. Indeed, the fact that $\psi(t) \underset{t \to \infty}{\sim} \tr(Z^*)t$ is enough to prove the result. 

In \cref{lemma:eigenvalues} and \cref{lemma:psibehaviour}, we will assume that $m^* \leq d$. Indeed, for the case $m^* > d$, the proof of convergence rates will be straightforward.

\begin{lemma} \label{lemma:eigenvalues}
    Denote $r = \min(m,d)$ and $\lambda_1(t), \dots, \lambda_r(t) > 0$ the non-zero eigenvalues of $W^{0T} E^*(t) W^0$. Then, for $j \in \llbracket 1, r \rrbracket$:
    \begin{equation*}
        \lambda_j(t) \underset{t \to \infty}{=} \Theta \left( \int_0^t e^{-2 \psi(s)} e^{2s \tr(Z^*)} e^{4 \mu_j s} \d s \right).
    \end{equation*}
\end{lemma}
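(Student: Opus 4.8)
The plan is to reduce the eigenvalue asymptotics of the $m\times m$ matrix $W^{0T}E^*(t)W^0$ to those of the $d\times d$ matrix $E^*(t) = \int_0^t e^{-2\psi(s)}e^{2s\tr(Z^*)}e^{4Z^*s}\,\d s$, which is diagonal in the eigenbasis $v_1^*,\dots,v_d^*$ of $Z^*$ with entries $e_j(t) := \int_0^t e^{-2\psi(s)}e^{2s\tr(Z^*)}e^{4\mu_j s}\,\d s$ (where $\mu_j = 0$ for $j > m^*$). So the claim amounts to showing $\lambda_j(t) = \Theta(e_j(t))$. First I would record the monotonicity $e_1(t) \geq e_2(t) \geq \dots \geq e_d(t)$ coming from $\mu_1 \geq \dots \geq \mu_{m^*} > 0 = \dots = 0$, and — this is the crucial quantitative input — that consecutive distinct eigenvalues are \emph{exponentially separated}: using $\psi(t) \sim \tr(Z^*)t$ from equation \eqref{eq:C2behavepsi} together with \cref{lemma:asymptoticsIntegral} applied to $v(s) = -2\psi(s) + 2s\tr(Z^*) = o(s)$ with $\lambda = 4\mu_j$, one gets $e_j(t) \sim \frac{1}{4\mu_j}e^{-2\psi(t)}e^{2t\tr(Z^*)}e^{4\mu_j t}$ for $j \leq m^*$, hence $e_{j+1}(t)/e_j(t) = \Theta(e^{-4(\mu_j - \mu_{j+1})t}) \to 0$ when $\mu_j > \mu_{j+1}$; for the zero block $j > m^*$ one instead has $e_j(t) = e_{m^*+1}(t)$ for all such $j$, and the second part of \cref{lemma:asymptoticsIntegral} gives $e_{m^*+1}(t)/e_{m^*}(t) \to 0$ as well.

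Next I would pass from $E^*(t)$ to $W^{0T}E^*(t)W^0$ via the Courant–Fischer min–max formula (\cref{lemma:courantfischer}). Write $E^*(t) = \sum_{k=1}^d e_k(t) v_k^*(v_k^*)^T$, so that for $x \in \R^m$, $x^T W^{0T}E^*(t)W^0 x = \sum_{k=1}^d e_k(t)\,(x^T W^{0T}v_k^*)^2$. For the upper bound on $\lambda_j(t)$, restrict to the subspace $V = \mathrm{Vect}(W^{0T}v_1^*,\dots,W^{0T}v_{j-1}^*)^\perp$ in $\R^m$, which (on the general-position event, guaranteed almost surely by Assumption \ref{AssumptionInit} and independence of $W^0$ and $Z^*$) has dimension $m - (j-1)$; on this subspace the first $j-1$ terms vanish, so the quadratic form is $\leq e_j(t)\sum_{k \geq j}(x^TW^{0T}v_k^*)^2 \leq e_j(t)\,\|W^{0T}\|^2\,\|x\|^2$, giving $\lambda_j(t) = O(e_j(t))$. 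For the lower bound, take $V = \mathrm{Vect}(W^{0T}v_1^*,\dots,W^{0T}v_j^*)$ (dimension $j$ on the general-position event); for unit $x \in V$ the quadratic form is $\geq e_j(t)\sum_{k\leq j}(x^TW^{0T}v_k^*)^2 = e_j(t)\,x^T G_j x$ where $G_j$ is the Gram matrix of $(W^{0T}v_1^*,\dots,W^{0T}v_j^*)$ restricted to $V$, which is positive definite with a deterministic smallest eigenvalue $c_j > 0$; thus $\lambda_j(t) \geq c_j\, e_j(t)$, i.e.\ $\lambda_j(t) = \Omega(e_j(t))$. Combining the two bounds yields $\lambda_j(t) = \Theta(e_j(t))$, which is the assertion.

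I expect the main obstacle to be the bookkeeping around the zero eigenvalues of $Z^*$ when $m^* < d$: here $e_{m^*+1}(t) = \dots = e_d(t)$, so the naive "exponential separation" fails between these indices, and one must be careful that the min–max argument still pins down $\lambda_{m^*+1}(t),\dots,\lambda_r(t)$ (up to the right index truncation $r = \min(m,d)$) at the common order $e_{m^*+1}(t)$, rather than accidentally producing a smaller order. The general-position / linear-independence statement — that any $\leq \min(m,d)$ of the vectors $W^{0T}v_1^*,\dots,W^{0T}v_d^*$ are independent in $\R^m$, which holds almost surely under Assumption \ref{AssumptionInit} — is exactly what licenses the dimension counts in both Courant–Fischer applications, so I would invoke it explicitly at the start. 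The circularity worry (we use $\psi(t)\sim\tr(Z^*)t$, which itself is \eqref{eq:C2behavepsi}, a consequence of the convergence $W(t)W(t)^T \to Z^*$ proven in \cref{PropCV}) is not actually an obstacle here since \cref{PropCV} is available independently; the refined asymptotics of $\psi$ beyond $o(1)$ are deferred to \cref{lemma:psibehaviour} and are not needed for this lemma.
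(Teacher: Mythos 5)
Your proposal is correct and follows essentially the same route as the paper: same diagonalization of $E^*(t)$ in the eigenbasis of $Z^*$, same use of \cref{lemma:asymptoticsIntegral} together with $\psi(t)\sim\tr(Z^*)t$ to order the weights $e_j(t)$, same Courant--Fischer min--max argument with $V=\mathrm{Span}(z_1,\dots,z_{j-1})^\perp$ for the upper bound and $V=\mathrm{Span}(z_1,\dots,z_j)$ for the lower bound, and same appeal to the almost-sure general-position event under Assumption \ref{AssumptionInit}. The only superficial difference is that you bound the upper constant by $\|W^0\|_{\mathrm{op}}^2$ while the paper keeps a $j$-dependent $C_j$; and the ``zero-eigenvalue bookkeeping'' you flag as a potential obstacle is in fact benign, since the monotonicity $e_1(t)\geq\dots\geq e_d(t)$ and the positivity of the restricted quadratic form on the general-position event already handle the indices $j>m^*$ without any extra care.
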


The notation $\Theta$ means that $\lambda_j(t)$ is bounded from below and above by constants times the integral. This lemma is proven in \cref{App:proofeigenvalues}. The goal is now to use this result along with equation \eqref{eq:selfpsi2} to obtain a more precise idea of how $\psi(t)$ evolve as $t \to \infty$. This is done in the following lemma.

\begin{lemma} \label{lemma:psibehaviour}
    For $m \geq m^*$:
    \begin{equation*}
        \psi(t) \underset{t \to \infty}{=} \tr(Z^*)t + \frac{1}{2} \frac{\min(m,d)-m^*}{\min(m,d)+2} \log(t) + O(1).
    \end{equation*}
\end{lemma}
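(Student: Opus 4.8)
The plan is to bootstrap on the error term $P(t) := \psi(t) - \tr(Z^*)\,t$, feeding off the self-consistent identity \eqref{eq:selfpsi2} together with the eigenvalue asymptotics of \cref{lemma:eigenvalues} and the integral estimates of \cref{lemma:asymptoticsIntegral}. Write $r := \min(m,d)$. Since $E^*(t)$ is positive definite and $\mathrm{rank}(W^0)=r$ under \cref{AssumptionInit}, the matrix $W^{0T}E^*(t)W^0 \in \R^{m\times m}$ has exactly $r$ nonzero eigenvalues $\lambda_1(t)\ge\dots\ge\lambda_r(t)>0$, so \eqref{eq:selfpsi2} reads $\psi(t) = \frac14\sum_{j=1}^r\log(1+4\lambda_j(t))$. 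I would then set $v(s) := 2s\,\tr(Z^*) - 2\psi(s) = -2P(s)$; since $\dot v(s) = 2\tr(Z^*-Z(s))\to 0$ (by \cref{PropCV}, $Z(s)\to Z^*$ almost surely), one has $v(t)=o(t)$ and \cref{lemma:asymptoticsIntegral} applies to integrals $\int_0^t e^{v(s)}e^{\lambda s}\,\d s$.

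First I would reduce everything to the scalar quantity $I_0(t) := \int_0^t e^{v(s)}\,\d s$. For the signal directions $j\le m^*$ (where $\mu_j>0$), \cref{lemma:eigenvalues} gives $\lambda_j(t) = \Theta\bigl(\int_0^t e^{v(s)}e^{4\mu_j s}\,\d s\bigr)$, which by \cref{lemma:asymptoticsIntegral} equals $\Theta\bigl(e^{v(t)}e^{4\mu_j t}\bigr)\to\infty$, hence $\log(1+4\lambda_j(t)) = 4\mu_j t + v(t) + O(1)$. For the null directions $m^*<j\le r$ (where $\mu_j=0$), \cref{lemma:eigenvalues} gives $\lambda_j(t) = \Theta(I_0(t))$ and, since $I_0(t)\ge I_0(1)>0$, this forces $\log(1+4\lambda_j(t)) = \log(1+4I_0(t)) + O(1)$. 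Summing over $j$, using $\sum_{j\le m^*}\mu_j = \tr(Z^*)$ and then $v(t) = 2t\tr(Z^*)-2\psi(t)$, the $\psi$-terms collect into $(2m^*+4)\psi(t) = (2m^*+4)\tr(Z^*)t + (r-m^*)\log(1+4I_0(t)) + O(1)$, i.e.
\[
    P(t) \;=\; a\,\log\bigl(1+4I_0(t)\bigr) + O(1),\qquad a := \frac{r-m^*}{2(m^*+2)}\ \ge\ 0,
\]
where $a\ge0$ because $m\ge m^*$ and $m^*\le d$ give $r\ge m^*$. If $r=m^*$ (which occurs when $m=m^*$ or $d=m^*$) this already yields $P(t)=O(1)$, matching the claimed expansion since its logarithmic coefficient then vanishes.

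Then I would close the loop through $I_0$ in the case $r>m^*$. The function $I_0$ is nondecreasing, hence has a limit in $(0,\infty]$; if that limit were finite, $P$ would be bounded above by the previous display, so $\dot I_0(t)=e^{-2P(t)}$ would be bounded below by a positive constant, forcing $I_0(t)\to\infty$ --- a contradiction. Hence $I_0(t)\to\infty$, so $P(t) = a\log I_0(t) + O(1)$, whence $\dot I_0(t) = e^{-2P(t)} = \Theta\bigl(I_0(t)^{-2a}\bigr)$, i.e. $\frac{\d}{\d t}\,I_0(t)^{2a+1} = \Theta(1)$. Integrating gives $I_0(t) = \Theta\bigl(t^{1/(2a+1)}\bigr)$, so $\log I_0(t) = \frac1{2a+1}\log t + O(1)$; and since $2a+1 = \frac{r+2}{m^*+2}$, this produces $P(t) = a\cdot\frac{m^*+2}{r+2}\log t + O(1) = \frac{r-m^*}{2(r+2)}\log t + O(1)$, which is exactly the statement with $r = \min(m,d)$.

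The hard part is that \eqref{eq:selfpsi2} is genuinely self-referential --- the unknown $\psi$ sits inside the exponentials defining $E^*$, hence inside every $\lambda_j$ --- so the real work is turning the fixed-point relation $P = a\log(1+4I_0)+O(1)$ into honest two-sided bounds rather than a formal identity. Two features make this possible: \cref{lemma:eigenvalues} controls the $\lambda_j$ only up to multiplicative constants, but this is exactly enough once one passes to logarithms; and the monotonicity of $I_0$ lets the dichotomy ``$I_0$ bounded vs.\ $I_0\to\infty$'' be settled by a one-line contradiction argument instead of an a priori estimate. A last minor point is bookkeeping the degenerate subcases $r=m^*$, where the logarithmic term is simply absent.
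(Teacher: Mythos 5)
Your proof is correct and follows essentially the same route as the paper's: both set $P(t)=\xi(t)=\psi(t)-\tr(Z^*)t$, split the sum over eigenvalues into the signal directions (handled via \cref{lemma:eigenvalues} and \cref{lemma:asymptoticsIntegral}) and the null directions (controlled by $I_0(t)=\int_0^t e^{-2\xi(s)}\,\d s$), derive the fixed-point relation $P(t)=a\log(1+4I_0(t))+O(1)$ with $a=\frac{r-m^*}{2(m^*+2)}$, rule out bounded $I_0$ by the same one-line contradiction, and then integrate $\dot I_0\,I_0^{2a}=\Theta(1)$ (equivalently the paper's $\dot a(t)a(t)^\beta$ with $\beta=2a$) to get $\log I_0(t)=\frac{1}{2a+1}\log t+O(1)$ and the claimed coefficient $\frac{r-m^*}{2(r+2)}$. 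The only differences are cosmetic — you carry $\Theta$-bookkeeping throughout where the paper introduces explicit bounded functions $h_1,h_2$, and you integrate $I_0^{2a+1}$ directly instead of solving for $a(t)^{\beta+1}$ — but the decomposition, the key lemmas invoked, and the logical skeleton are identical.
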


\begin{proof}
    Set $\xi(t) = \psi(t) - \tr(Z^*)t$ which is $o(t)$ by equation \eqref{eq:C2behavepsi}. From the previous lemma, let $C_j, D_j$ such that:
    \begin{equation*}
        C_j \leq \lambda_j(t)  \left( \int_0^t e^{-2 \psi(s)} e^{2s \tr(Z^*)} e^{4 \mu_j s} \d s \right)^{-1} \leq D_j.
    \end{equation*}  
    In the following, we only use the upper bound, the lower bound will be done similarly. By equation \eqref{eq:selfpsi2} and using the notations of the previous lemma, as well as $r = \min(m,d)$:
    \begin{equation*}
    \begin{aligned}
        \psi(t) &= \frac{1}{4} \sum_{j=1}^{r} \log \big( 1 + 4 \lambda_j(t) \big) \\
    &\leq \frac{1}{4} \sum_{j=1}^{m^*} \log \left(1 +  4 D_j \int_0^t e^{-2 \xi(s)} e^{4 \mu_j s} \d s \right) + \frac{1}{4} \sum_{j=m^*+1}^r \log \left( 1 + 4 D_j \int_0^t e^{-2 \xi(s)} \d s \right).
    \end{aligned}
    \end{equation*}
    From \cref{lemma:asymptoticsIntegral}, since $\dot{\xi}(t)$ goes to zero as $t \to \infty$, we can obtain the behaviour of the first term:
    \begin{equation*}
        \frac{1}{4} \sum_{j=1}^{m^*} \log \left(1 + 4 D_j \int_0^t e^{-2 \xi(s)} e^{4 \mu_j s} \d s \right)  \underset{t \to \infty}{=} \tr(Z^*)t - \frac{m^*}{2} \xi(t) + \frac{1}{4} \sum_{j=1}^{m^*} \log \left( \frac{D_j}{\mu_j} \right) + o(1).
    \end{equation*}
    For the second we need to understand how $\int_0^t e^{-2 \xi(s)} \d s$ behave. As $t \to \infty$, this integral either converges or goes to infinity. Suppose by contradiction that it converges towards some finite value. Using the two previous equations, along with the lower bound obtained from the previous lemma, this implies that $\xi(t)$ stays bounded, which leads to a contradiction since $\int_0^t e^{-2 \xi(s)} \d s$ should diverge. Thus, we obtain:
    \begin{equation*}
        \left( 1 + \frac{m^*}{2} \right) \xi(t) \underset{t \to \infty}{\lesssim} \frac{r-m^*}{4} \log \left( \int_0^t e^{-2\xi(s)} \d s \right) + \frac{1}{4} \sum_{j=1}^{m^*} \log \left( \frac{D_j}{\mu_j} \right) + \frac{1}{4} \sum_{j=m^*+1}^r \log(4 D_j) + o(1).
    \end{equation*}
    Thus, using the lower bound of the previous lemma, we obtain $h_1(t), h_2(t)$ two bounded functions such that:
    \begin{equation*}
        h_1(t) \leq \xi(t) - \frac{r-m^*}{2(m^*+2)} \log \left( \int_0^t e^{-2 \xi(s)} \d s \right) \leq h_2(t).
    \end{equation*}
    Setting $a(t) = \int_0^t e^{-2 \xi(s) \d s}$, we obtain:
    \begin{equation*}
        e^{-2 h_2(t)} \leq \dot{a}(t) a(t)^{\beta} \leq e^{-2 h_1(t)},
    \end{equation*}
    with $\beta = \dfrac{r-m^*}{m^*+2}$. Integrating, and using that $\xi(t) = - \dfrac{1}{2} \log \dot{a}(t)$:
    \begin{equation*}
        \zeta + h_1(t) + \frac{\beta}{2(\beta+1)} \log \left( \int_0^t e^{-2 h_2(s)} \d s \right) \leq \xi(t) \leq \zeta + h_2(t) + \frac{\beta}{2(\beta+1)} \log \left( \int_0^t e^{-2 h_1(s)} \d s \right),
    \end{equation*}
    where $\zeta$ is a constant depending only on $\beta$. We now use that $h_1, h_2$ are $O(1)$. There exists constants $K_1, K_2$ such that:
    \begin{equation*}
        K_1 \leq \xi(t) - \frac{\beta}{2(\beta+1)} \log(t) \leq K_2,
    \end{equation*}
    which gives the result by the definition of $\beta$ and $r = \min(m,d)$.     
\end{proof}

Now that the behaviour of $\psi$ is known, we are ready to determine the convergence rates in the case $m \geq m^*$. We showed that we can assume $Z^*$ to be diagonal without loss of generality. Let $Z^* = Q^T D^* Q$ with $Q \in O_d(\R)$. If $W(t)$ is solution of the flow of equation \eqref{eq:flow} (associated with $Z^*$), then $V(t) = QW(t)$ is solution of the same equation but with teacher $D^*$. Moreover, since $VV^T - D^* = Q(WW^T - Z^*) Q^T$, this implies that the loss of $V$ (with teacher $D^*$) is equal to the loss of $W$ (with teacher $Z^*$). Thus, as long as the convergence properties we derive are invariant under conjugation for $Z^*$ (for instance if they only depend on its eigenvalues, which is the case in \cref{PropCVRate}), we may assume that:
\begin{equation} \label{eq:C2diagZ*}
    Z^* = \begin{pmatrix}
        Z_0 & 0 \\ 0 &0
    \end{pmatrix},
\end{equation}
where $Z_0 = \mathrm{diag}(\mu_1, \dots, \mu_{m^*}) \in \R^{m^* \times m^*}$. We split the proof into two parts: first, we study the case $m,m^* \geq d$. In the second part, we jointly cover the two other cases.

\subsubsection{Highly overparameterized case}

We start with the first case, i.e $m, m^* \geq d$. If $W(t)$ if solution of the flow then, $\dot{W}(t) = -M(t)W(t)$ with $M(t) = 2 \big(W(t)W(t)^T - Z^*) + \tr \big(W(t)W(t)^T - Z^* \big) I_d$. Thus:
\begin{equation*}
    \frac{\d}{\d t} \L(W(t)) = - \big\| \dot{W}(t) \big\|^2 = - \tr \big( M(t)^2 W(t) W(t)^T\big) \leq - \lambda_{min} \big(W(t)W(t)^T \big) \tr \big( M(t)^2 \big).
\end{equation*}
We now use the inequality $\tr\big(M(t)^2\big) \geq 8 \, \L(W(t))$, and the fact that $\lambda_{min} \big(W(s)W(s)^T \big) \xrightarrow[t\to \infty]{} \mu$. Therefore:
\begin{equation*}
    \L(W(t)) \leq \L(W^0) \exp \left( - 8 \int_0^t \lambda_{min} \big(W(s)W(s)^T \big) \d s \right) = \L(W^0) \exp \big( - 8 \mu t + o(t) \big).
\end{equation*}
We now show that the $o(t)$ is in fact a $O(1)$:
\begin{equation*}
    \left| \int_0^t \lambda_{min}\big(W(s)W(s)^T \big) \d s - \mu t \right| \leq \int_0^t  \big\|W(s)W(s)^T - Z^* \big\| \d s \leq \sqrt{2} \int_0^t \sqrt{\L(W(s))} \d s,
\end{equation*}
which remains bounded as $t \to \infty$. This proves the first claim of \cref{PropCVRate}. 

\subsubsection{General case}

In the following, we suppose that $m^* < d$. To derive the convergence rates in the case $m \geq m^*$, the first step uses the implicit solution obtained in \cref{Prop:Analytic}:
\begin{equation*}
    Z(t) = e^{-2 \xi(t)} e^{2Z^*t} W^0 \underbrace{\left( I_m + 4 W^{0T} \int_0^t e^{-2 \xi(s)} e^{4Z^*s} \d s \ W^0 \right)^{-1}}_{\textstyle \equiv N(t)^{-1}} W^{0T} e^{2Z^* t},
\end{equation*}
with $\xi(t) = \psi(t) - \tr(Z^*)t$ whose behaviour is known as $t \to \infty$. We now decompose $W^0 = \begin{pmatrix} U_0 \\ V_0 \end{pmatrix}$ where $U_0 \in \R^{m^* \times m}$ and $V_0 \in \R^{(d-m^*) \times m}$. Note that under our assumption on the initialization, we have with probability one $\mathrm{rank}(W_0) = \min(m,d)$ and $\mathrm{rank}(U_0) = m^*$ (since $m \geq m^*$). In order to avoid sets of probability one, we restrict ourselves to this event. Then, from the previous equation:
\begin{equation} \label{eq:C2decomposition}
    Z(t) = e^{-2 \xi(t)} \begin{pmatrix}
        e^{2Z_0t} U_0 N(t)^{-1} U_0^T e^{2Z_0t} & e^{2Z_0t} U_0 N(t)^{-1} V_0^T \\ V_0 N(t)^{-1} U_0^T e^{2Z_0t} & V_0 N(t)^{-1} V_0^T \end{pmatrix}  \equiv \begin{pmatrix}
            A(t) & B(t) \\ B(t)^T & C(t) \end{pmatrix},
\end{equation}
with $A(t) \in \mathcal{S}_{m^*}^+(\R), B(t) \in \R^{m^* \times (d-m^*)}$ and $C(t) \in \mathcal{S}_{d-m^*}^+(\R)$. To start, we are going to bound the bottom right term. From \cref{lemma:traces}:
\begin{equation*}
    \tr\big(C(t) \big) = e^{-2 \xi(t)} \tr \big(V_0^T N(t)^{-1} V_0 \big) \leq e^{-2 \xi(t)} \tr(V_0V_0^T) \lambda_{min}(N(t))^{-1}.
\end{equation*}
Since $N(t) = I_m + 4 W^{0T} E^*(t) W^0$, the behaviour of the smallest eigenvalue of $N(t)$ can be deduced from \cref{lemma:eigenvalues} along with the behaviour of $\xi$. We get that:
\begin{equation}\label{eq:C2C(t)}
    \tr \big( C(t) \big) \underset{t \to \infty}{=} \left\{\begin{array}{ll}
        O \big(e^{- 4 \mu t} \big) & \mathrm{for} \  m =m^* \\
         O \big( t^{-1} \big)&  \mathrm{for} \ m > m^*
    \end{array} \right. ,
\end{equation}
where $\mu$ is the smallest non-zero eigenvalue of $Z^*$. Now, in order to obtain the behaviour of $A(t)$ and $B(t)$, we use the differential equation on $Z(t)$:
\begin{equation*}
    \dot{Z}(t) = - 2 \dot{\xi}(t) Z(t) + 2Z(t)(Z^*-Z(t)) + 2(Z^* - Z(t))Z(t).
\end{equation*}
Due to the form of $Z^*$ in equation \eqref{eq:C2diagZ*}, this induces the evolution equations for $A(t)$ and $B(t)$:
\begin{equation} \label{eq:C2AB}
\begin{aligned}
    \dot{A} &= -2 \dot{\xi} A + 2A(Z_0-A) + 2(Z_0-A)A - 4 BB^T \\
    \dot{B} &= -2 \dot{\xi} B + 2(Z_0-A)B - 2AB - 4BC.
\end{aligned}
\end{equation}
We begin by controlling $\big\|B(t)\big\|_F$:
\begin{equation*}
\begin{aligned}
    \frac{\d}{\d t} \big\| B \big\|_F^2 &= -2 \dot{\xi} \big\| B \big\|_F^2 + 4 \tr \big( (Z_0-A)BB^T \big) -4 \tr \big(ABB^T\big) - 8 \tr \big(BCB^T \big) \\
    &\leq \big( -2 \dot{\xi} + 4 \lambda_{max}(Z_0-A) -4 \lambda_{min}(A) \big) \big\|B \big\|_F^2.
\end{aligned}
\end{equation*}
Therefore:
\begin{equation} \label{eq:C2B(t)}
    \big\|B(t)\big\|_F^2 \leq \big\|B_0\big\|_F^2 e^{- 4 \mu t } \exp \left( - \int_0^t \alpha(s) \d s \right),
\end{equation}
with:
\begin{equation} \label{eq:C2alpha}
    \alpha(t) = 2 \dot{\xi}(t) - 4 \lambda_{max}(Z_0-A(t)) + 4 \lambda_{min}(A(t)) - 4\mu \xrightarrow[t \to \infty]{} 0,
\end{equation}
since $A(t) \xrightarrow[t \to \infty]{} Z_0$ and $\dot{\xi}(t) = \tr(Z(t) - Z^*) \xrightarrow[t \to \infty]{} 0$ (note that $\lambda_{min}(Z_0) = \mu$). We now take care of $A(t)$ by introducing:
\begin{equation*}
    \phi(t) = \big\| A(t) - Z_0 \big\|_F^2 + \frac{1}{2} \big[ \tr(A(t)-Z_0) \big]^2.
\end{equation*}
Differentiating and using the equation \eqref{eq:C2AB} on $A(t)$ :
\begin{equation*}
\begin{aligned}
    \dot{\phi} &= - 2\tr (AM^2) - 2\tr(C) \tr(AM)- 4 \tr (BB^TM) \\  
    &\leq -2\lambda_{min}(A) \tr(M^2) + 2\tr(C) \tr(A^2)^{1/2} \tr(M^2)^{1/2} + 4\tr(BB^TBB^T)^{1/2} \tr(M^2)^{1/2},
\end{aligned}
\end{equation*}
where we defined $M = 2(A-Z_0) + \tr(A-Z_0)I_{m^*}$ and used Cauchy-Schwartz inequality as well as \cref{lemma:traces}. Using the inequality $4 \phi(t) \leq \tr(M^2) \leq 2(4+m^*) \phi(t)$, we obtain that:
\begin{equation*}
    \dot{\phi}(t) \leq - 8 \lambda_{min}(A(t)) \phi(t) + 2 \underbrace{\sqrt{2(4+m^*)} \big( \tr(C(t)) \tr(A(t)) + 2 \tr(B(t)B(t)^T) \big)}_{\textstyle \equiv \delta(t)} \sqrt{\phi(t)}.
\end{equation*}
We set $\Lambda(t) = \int_0^t \lambda_{min}(A(s)) \d s \underset{t \to \infty}{\sim} \mu t$, and obtain:
\begin{equation*}
    \phi(t) \leq e^{-8\Lambda(t)} \left( \sqrt{\phi(0)} + \int_0^t e^{4 \Lambda(s)} \delta(s) \d s \right)^2.
\end{equation*}
Using the definition of $\delta$, we have two terms to bound. From equation \eqref{eq:C2B(t)}:
\begin{equation*}
    \int_0^t e^{4 \Lambda(s)} \tr \big( B(s) B(s)^T \big) \d s \leq \big\|B_0 \big\|_F^2 \int_0^t e^{4 \Lambda(s)} e^{-4 \mu s} \exp \left( - \int_0^s \alpha(u) \d u \right) \d s.
\end{equation*}
Since $\alpha(t) \xrightarrow[t \to \infty]{} 0$ and due to the behaviour of $\Lambda(t)$, this term is $o(e^{ \beta t})$ for all $\beta > 0$ thanks to \cref{lemma:asymptoticsIntegral}. For the other term, we use equation \eqref{eq:C2C(t)} and split the cases. For $m=m^*$:
\begin{equation*}
    \int_0^t e^{4 \Lambda(s)} \tr(C(s)) \tr(A(s)) \d s \underset{t \to \infty}{\lesssim} \int_0^t e^{4 \Lambda(s)} e^{-4 \mu s} \tr(A(s)) \d s,
\end{equation*}
with again is $o(e^{\beta t})$ for all $\beta > 0$. Thus, in this case, we have the bound:
\begin{equation*}
    \phi(t) \lesssim e^{-8 \mu t} e^{v(t)},
\end{equation*}
where $v(t) = o(t)$ as $t \to \infty$. In the case $m > m^*$, again with equation \eqref{eq:C2C(t)} and \cref{lemma:asymptoticsIntegral}: 
\begin{equation*}
    \int_0^t e^{4 \Lambda(s)} \tr(C(s)) \tr(A(s)) \d s \underset{t \to \infty}{\lesssim} \frac{\tr(Z_0)}{4 \mu} \frac{e^{4 \Lambda(t)}}{t},
\end{equation*}
which predominates over the first term. Therefore, for $m > m^*$, we get that $\phi(t) = O \big(t^{-2} \big)$. Finally, putting everything together:
\begin{equation*}
\begin{aligned}
    \big\|Z(t) - Z^*\big\|_F^2 &\leq \big\|A(t) - Z_0\big\|_F^2 + 2 \big\|B(t)\big\|_F^2 + \tr \big(C(t)^2 \big) \\
    &\leq \phi(t) + 2 \big\|B(t)\big\|_F^2 + \big[\tr(C(t))\big]^2.
\end{aligned}
\end{equation*}
 For $m > m^*$, the second term is negligible and we get that $\|Z(t)-Z^*\|_F^2 = O(t^{-2})$. For $m=m^*$, the first and third term are of the order $e^{-8 \mu t}$ (up to some corrections), and the leading term is given by $\big\|B(t) \big\|_F^2$. In this case:
 \begin{equation*}
     \big\| Z(t) - Z^*\big\|_F^2 \underset{t \to \infty}{=} O \left( e^{-4 \mu t} \exp \left( - \int_0^t \alpha(s) \d s \right) \right),
 \end{equation*}
 where $\alpha$ is defined in equation \eqref{eq:C2alpha}. We already have that $\int_0^t \alpha(s) \d s = o(t)$ as $t \to \infty$ so that we know the main behaviour of $\big\|Z(t) - Z^* \big\|_F$. We now show that $\int_0^t \alpha(s) \d s$ is bounded. From the expression of $\xi$:
 \begin{equation*}
\begin{aligned}
    \left| \int_0^t  \alpha(s)  \d s \right| &\leq 2 \int_0^t \big| \tr(Z(s) - Z^*) \big| \d s + 4 \int_0^t \big| \lambda_{max}(Z_0 - A(s)) \big| \d s + 4 \int_0^t \big| \lambda_{min}(A(t)) - \mu \big| \d s \\
    &\leq 2 \sqrt{d} \int_0^t \big\| Z(s) - Z^*\big\|_F \d s + 8 \int_0^t \big\| A(s) - Z_0 \big\|_F \d s.
\end{aligned}
 \end{equation*}
Due to the bounds obtained in the case $m=m^*$, the integrals converge which leads to $\big\|Z(t) - Z^*\big\|_F^2 = O \big( e^{-4 \mu t} \big)$. The result of the proposition follows from the bound:
\begin{equation*}
    \L(W(t)) = \frac{1}{2} \big\| Z(t) - Z^*\big\|_F^2 + \frac{1}{4} \big[ \tr(Z(t)-Z^*)]^2 \leq \left( \frac{1}{2} + \frac{d}{4} \right) \big\|Z(t) - Z^*\big\|_F^2.
\end{equation*}

\subsection{Proof of Proposition \ref{PropPhi}} \label{ProofPhi}

The main idea behind the proof is that $\psi_d$ is solution of the implicit equation \eqref{eq:selfpsi}:
\begin{equation} \label{eq:C3selfpsi}
    \psi_d(t) = \frac{1}{4} \tr \log \left( I_{m_d} + 4 W_d^{0T} \int_0^t e^{-2 \psi_d(s)} e^{2s \tr(Z_d^*)} e^{4Z_d^*s} \d s \ W_d^0 \right).
\end{equation}
In order to understand the high-dimensional limit of $\psi_d$ one should be able to derive the limiting distribution of the matrix inside the log, which is not easily solved since this matrix depends on $\psi_d$. However, this becomes easier under Assumption \ref{AssumptionHighdim} which states that the matrices $U_d^0U_d^{0T}$ and $U_d^* U_d^{*T}$ are uniformly drawn orthonormal projections (see \cref{App:Manova} for a more precise definition). Indeed, equation \eqref{eq:exponentialorthonormal} shows that:
\begin{equation} \label{eq:C3exponential}
    W_d^{0T} e^{4Z_d^*s} W_d^0 = \frac{1}{m_d} \left( I_{m_d} + \big( e^{4s / m_d^*} - 1 \big) U_d^{0T} U_d^* U_d^{*T} U_d^0 \right).
\end{equation}
Thus, the integrals involving $\psi_d$ can be detached from the matrices and the analysis will be simpler. The proof is split in different steps. We start by deriving properties of $\phi_d$ using the implicit equation \eqref{eq:C3selfpsi}. Through those steps, the goal is to obtain sufficient information on $\phi_d$ so that we can extract a converging subsequence (using Arzelà-Ascoli theorem). Finally, the goal will be to identify an equation which is verified in the limit (\cref{lemma:phi4}) and show that its solution is unique (\cref{lemma:phi5}). 

In the following, we set $T > 0$ and denote $\mathcal{C}_T$ the space of continuous functions on $[0,T]$ equipped with the norm:
\begin{equation*}
    \| \chi \|_T = \sup_{\gamma \in [0,T]} |\chi(t)|.
\end{equation*}

\subsubsection{Useful bounds}

As mentioned earlier, the two following lemmas allow to gather sufficient information on $\phi_d$ in order to extract converging subsequences in $\mathcal{C}_T$ for $T > 0$. The first naive bound we obtain uses the fact that the loss function is always decreasing along a gradient flow. 

\begin{lemma} \label{lemma:phi1}
    There exists $\kappa > 0$, such that:
    \begin{equation*}
        \sup_{\gamma \geq 0} \big| \dot{\phi}_d(\gamma) \big| \leq \kappa \sqrt{d}.
    \end{equation*}
\end{lemma}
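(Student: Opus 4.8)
The plan is to convert the claimed estimate into a uniform-in-time bound on $\tr(\Delta_{W_d(t)})$, where $\Delta_W = WW^T - W^*W^{*T}$, and then to control that trace using the monotonicity of $\L$ along the flow together with an explicit estimate of the loss at initialization. Differentiating the definition \eqref{eq:phi} of $\phi_d$ gives $\dot{\phi}_d(\gamma) = d\,\tr\big(\Delta_{W_d(\gamma d)}\big)$, so it is enough to produce a constant $c > 0$ such that $\sup_{t \ge 0}\big|\tr(\Delta_{W_d(t)})\big| \le c/\sqrt{d}$ for all large $d$.

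From the decomposition of the loss in \eqref{eq:loss} one has $\tfrac14[\tr(\Delta_W)]^2 \le \L(W)$ for every $W \in \R^{d \times m}$, hence $\big|\tr(\Delta_{W_d(t)})\big| \le 2\sqrt{\L(W_d(t))}$. Along the gradient flow \eqref{eq:flow} the loss is non-increasing, since $\tfrac{\d}{\d t}\L(W_d(t)) = -\|\nabla\L(W_d(t))\|_F^2 \le 0$; therefore $\L(W_d(t)) \le \L(W_d^0)$ for all $t \ge 0$, and it remains to show $\L(W_d^0) = O(1/d)$.

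Under \cref{AssumptionHighdim} one has $Z_d^0 = W_d^0 W_d^{0T} = \tfrac{1}{m_d} U_d^0 U_d^{0T}$ and $Z_d^* = \tfrac{1}{m_d^*} U_d^* U_d^{*T}$ with $U_d^{0T}U_d^0 = I_{m_d}$ and $U_d^{*T}U_d^* = I_{m_d^*}$. Consequently $\tr(Z_d^0) = \tr(Z_d^*) = 1$, so $\tr(\Delta_{W_d^0}) = 0$, while $\|Z_d^0\|_F^2 = 1/m_d$ and $\|Z_d^*\|_F^2 = 1/m_d^*$. Since $Z_d^0, Z_d^* \in \mathcal{S}_d^+(\R)$ we have $\tr(Z_d^0 Z_d^*) \ge 0$, hence $\|\Delta_{W_d^0}\|_F^2 = \|Z_d^0\|_F^2 - 2\tr(Z_d^0 Z_d^*) + \|Z_d^*\|_F^2 \le 1/m_d + 1/m_d^*$, and therefore $\L(W_d^0) = \tfrac12\|\Delta_{W_d^0}\|_F^2 \le \tfrac12\big(1/m_d + 1/m_d^*\big)$. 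Because $m_d/d \to \alpha > 0$ and $m_d^*/d \to \alpha^* > 0$, there is $\rho > 0$ with $m_d, m_d^* \ge \rho d$ for all large $d$, so $\L(W_d^0) \le 1/(\rho d)$. Combining the three estimates gives $|\dot{\phi}_d(\gamma)| \le 2d\sqrt{\L(W_d^0)} \le (2/\sqrt{\rho})\sqrt{d}$ uniformly in $\gamma$, and enlarging the constant to absorb the finitely many remaining small values of $d$ yields the claimed $\kappa$.

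There is no genuine obstacle here; the only point worth noting is that the $O(1/d)$ decay of the initial loss really relies on both the normalization built into \eqref{eq:matrixvector} and \eqref{eq:loss} and the orthonormality and linear scaling of \cref{AssumptionHighdim}, and that, since the whole argument passes only through $\L(W_d^0)$, the resulting bound is automatically uniform in $\gamma$.
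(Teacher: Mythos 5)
Your proof is correct and follows essentially the same route as the paper: differentiate $\phi_d$ to get $\dot{\phi}_d(\gamma) = d\,\tr(\Delta_{W_d(\gamma d)})$, bound $|\tr(\Delta_W)|$ by $2\sqrt{\L(W)}$, invoke monotonicity of $\L$ along the flow, and then compute $\L(W_d^0) = O(1/d)$ from the orthonormality in Assumption~\ref{AssumptionHighdim} (dropping the non-negative cross term $\tr(Z_d^0 Z_d^*)$). The only cosmetic differences are that you explicitly record $\tr(\Delta_{W_d^0})=0$ and mention absorbing finitely many small $d$ into the constant, both of which the paper handles implicitly.
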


\begin{proof}
    This property is a consequence of the gradient flow structure. Indeed, following equation \eqref{eq:phi}:
    \begin{equation*}
        \phi_d(\gamma) = \int_0^\gamma \tr \big(W_d(s)W_d(s)^T - W_d^* W_d^{*T} \big) \d s,
    \end{equation*}
    where $W_d(t)$ is solution of the gradient flow \eqref{eq:flow} with initial condition $W_d^0$. Now, deriving the loss with respect to time:
    \begin{equation*}
        \frac{\d}{\d t} \L(W_d(t)) = - \big\| \nabla \L(W_d(t)) \big\|^2 \leq 0.
    \end{equation*}
    Thus, $\L(W_d(t))$ in non-increasing in time, and from the expression of the loss in equation \eqref{eq:loss}:
    \begin{equation*}
        \big[ \tr \big( W_d(t)W_d(t)^T - W_d^* W_d^{*T} \big)]^2 \leq 4 \L (W_d(t)) \leq 4 \L(W_d^0).
    \end{equation*}
    We now use the orthonormality assumption:
    \begin{equation*}
    \begin{aligned}
        \L(W_d^0) &= \frac{1}{2} \left\| \frac{1}{m_d} U_d^0 U_d^{0T} -\frac{1}{m_d^*} U_d^* U_d^{*T} \right\|_F^2 + \underbrace{\frac{1}{4} \left( \frac{1}{m_d} \tr \big( U_d^0 U_d^{0T}\big) - \frac{1}{m_d^*} \tr \big( U_d^* U_d^{*T} \big) \right)^2 }_{\textstyle = 0} \\
        &= \frac{1}{2} \left( \frac{1}{m_d^2} \tr \big(U_d^0 U_d^{0T} U_d^0 U_d^{0T} \big) - \frac{2}{m_d m_d^*} \tr \big( U_d^0 U_d^{0T} U_d^* U_d^{*T} \big) + \frac{1}{m_d^{*2}} \tr \big(  U_d^* U_d^{*T} U_d^* U_d^{*T} \big) \right) \\
        &\leq \frac{1}{2} \left( \frac{1}{m_d} + \frac{1}{m_d^*} \right).
    \end{aligned}
    \end{equation*}
    Since $m_d / d \xrightarrow[d \to \infty]{} \alpha$ and $m^*_d / d \xrightarrow[d \to \infty]{} \alpha^*$, we obtain that:
    \begin{equation*}
        \big| \tr \big(W_d(t)W_d(t)^T - W_d^* W_d^* \big) \big| \leq \frac{\kappa}{\sqrt{d}},
    \end{equation*}
    for some $\kappa$ independent from $t$ and $d$. Since $\dot{\phi}_d(\gamma) = d \, \tr \big(W_d(\gamma d)W_d(\gamma d)^T - W_d^* W_d^* \big)$, we obtain the desired result. 
\end{proof}

\begin{lemma} \label{lemma:phi2}
    For all $T > 0$:
    \begin{equation*}
        \sup_{d \in \mathbb{N}} \  \sup_{\gamma \in [0,T]} \big| \phi_d(\gamma) \big| < \infty. 
    \end{equation*}
    Moreover, the family $(\phi_d)_{d \in \mathbb{N}}$ is equicontinuous on $[0,T]$, that is:
    \begin{equation}
        \sup_{d \in \mathbb{N}} \  \sup_{\substack{\gamma, \gamma' \in [0,T] \\ |\gamma-  \gamma'| \leq \eta}} \big| \phi_d(\gamma) - \phi_d(\gamma') \big| \xrightarrow[ \eta \to 0]{} 0.
    \end{equation}
\end{lemma}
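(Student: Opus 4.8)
The plan is to reduce the implicit relation \eqref{eq:C3selfpsi} to a closed \emph{scalar} (Riccati-type) equation for $\phi_d$, after which both claims follow from elementary comparison (barrier) arguments. First I would insert \eqref{eq:exponentialorthonormal}, use $\tr(Z_d^*)=1$, and perform the substitutions $s=\sigma d$ and $\psi_d(s)=\phi_d(s/d)+s$ in \eqref{eq:C3selfpsi}; diagonalising $Y_d$ with eigenvalues $\nu_1^{(d)},\dots,\nu_{m_d}^{(d)}\in[0,1]$ turns the equation into
\begin{equation*}
\phi_d(\gamma)=\frac14\sum_{k=1}^{m_d}\log\big((1-\nu_k^{(d)})F_d(\gamma)+\nu_k^{(d)}G_d(\gamma)\big)-\gamma d,
\end{equation*}
where $F_d(\gamma)=1+\tfrac{4d}{m_d}\int_0^\gamma e^{-2\phi_d}$ and $G_d(\gamma)=1+\tfrac{4d}{m_d}\int_0^\gamma e^{-2\phi_d}e^{4\sigma d/m_d^*}\d\sigma$, so that $1\le F_d\le G_d$. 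Writing $\rho_d=e^{-2\phi_d}$, $\lambda_d(\gamma)=e^{4\gamma d/m_d^*}$, $p_k=(1-\nu_k^{(d)})F_d+\nu_k^{(d)}G_d$ and $\Sigma_d=\tfrac1{m_d}\sum_k\tfrac{(1-\nu_k^{(d)})+\nu_k^{(d)}\lambda_d}{p_k}$, differentiation (together with $\dot\psi_d(t)=\tr Z_d(t)$ from \cref{Prop:Analytic}) yields
\begin{equation*}
\dot\rho_d=2d\,\rho_d\,(1-\rho_d\Sigma_d),\qquad \rho_d(0)=1,\qquad \tr Z_d(\gamma d)=\rho_d\Sigma_d.
\end{equation*}

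For the uniform bound I would use only the elementary estimates $1/G_d\le\Sigma_d\le\lambda_d$ (each summand of $\Sigma_d$ has numerator in $[1,\lambda_d]$ and denominator in $[F_d,G_d]\subset[1,G_d]$), together with $G_d(\gamma)=1+\tfrac{4d}{m_d}\int_0^\gamma\rho_d\lambda_d$ and the fact that, on $[0,T]$ and for $d$ large, $\lambda_d\le L_0$ and $\tfrac{4d}{m_d}\le 5/\alpha$. A first-exit comparison against the barrier $\gamma\mapsto 2\,e^{(8L_0 d/m_d)\gamma}$ traps $\rho_d$ from above: if $\rho_d$ first met it at $\gamma_0$, then $G_d(\gamma_0)\le\tfrac12\rho_d(\gamma_0)$, hence $\rho_d(\gamma_0)\Sigma_d(\gamma_0)\ge 2$ and $\dot\rho_d(\gamma_0)\le -2d\rho_d(\gamma_0)<0$, contradicting the exit. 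Symmetrically, if $\rho_d$ reached $1/(2L_0)$ then $\rho_d\Sigma_d\le\tfrac12$ and $\dot\rho_d\ge d\rho_d>0$. Thus $\rho_d(\gamma)\in(1/(2L_0),C_+)$ on $[0,T]$ for a constant $C_+=C_+(T,\alpha,\alpha^*)$, which bounds $|\phi_d|=\tfrac12|\log\rho_d|$ uniformly (the finitely many small $d$ contribute a finite amount).

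For equicontinuity I would study $u(\gamma):=\tr Z_d(\gamma d)-1=\rho_d\Sigma_d-1$; since $\dot\rho_d=-2d\rho_d u$, one gets $\dot u=-2d(1+u)u+\rho_d\dot\Sigma_d$ with $u(0)=0$ (because $\tr Z_d^0=1$). By the previous step $\rho_d,F_d,G_d,\lambda_d$ are bounded on $[0,T]$ uniformly in $d$, one has $1+u=\rho_d\Sigma_d\ge c_0>0$, and
\begin{equation*}
\dot\Sigma_d=\frac1{m_d}\sum_k\Big(\frac{\nu_k^{(d)}\dot\lambda_d}{p_k}-\frac{4d}{m_d}\rho_d\frac{\big((1-\nu_k^{(d)})+\nu_k^{(d)}\lambda_d\big)^2}{p_k^2}\Big)
\end{equation*}
is bounded by a constant $M_1$ — the crucial point being that $\dot\lambda_d=\tfrac{4d}{m_d^*}\lambda_d$ is $O(1)$ since $d/m_d^*$ stays bounded. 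Hence $|\rho_d\dot\Sigma_d|\le M$ uniformly, and a barrier argument on $u$ (if $u>M/(2dc_0)$ then $\dot u<-M+\rho_d\dot\Sigma_d\le 0$, symmetrically from below, starting at $u(0)=0$) gives $|u(\gamma)|\le M/(2dc_0)$ on $[0,T]$. Therefore $|\dot\phi_d(\gamma)|=d\,|u(\gamma)|\le M/(2c_0)$, a bound independent of $d$ (large) and of $\gamma\in[0,T]$, so $(\phi_d)$ is uniformly Lipschitz on $[0,T]$, hence equicontinuous (adjoining the finitely many small $d$).

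The main obstacle — and really the heart of the argument — is the reduction in the first step: it is available only because the orthonormality assumption collapses everything onto the single matrix $Y_d$ through \eqref{eq:exponentialorthonormal}, and the rescaling $t=\gamma d$ keeps $\lambda_d$, $d/m_d$ and $d/m_d^*$ all $O(1)$. After that, the only genuine subtlety is the observation in the last paragraph: although $\phi_d(\gamma)=\psi_d(\gamma d)-\gamma d$ is a difference of two $\Theta(d)$ quantities and the crude estimate of \cref{lemma:phi1} only gives $|\dot\phi_d|=O(\sqrt d)$, the Riccati structure forces $\tr Z_d(\gamma d)-1=O(1/d)$ — one order better — which is exactly what upgrades ``uniformly bounded'' to ``uniformly Lipschitz''.
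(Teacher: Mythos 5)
Your proposal is correct and takes a genuinely different route from the paper's. The paper leans on its \cref{lemma:phi1} (the $O(\sqrt d)$ a-priori bound $|\dot\phi_d|\le\kappa\sqrt d$, deduced from the monotone decay of the loss) at both stages: for the uniform bound it sandwiches $\dot\phi_d+d$ between $\tfrac{m_d}{4}\dot F_d/F_d$ and $\tfrac{m_d}{4}\dot G_d/G_d$ using the monotonicity of $x\mapsto U_d(\gamma,x)$ and then integrates, and for equicontinuity it splits $\phi_d=-\tfrac12\log(1+\epsilon_d)+\tfrac12\log T_d$, controls $T_d$ by a Lipschitz bound and uses $\epsilon_d=O(1/\sqrt d)$. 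You instead bypass \cref{lemma:phi1} entirely, recasting the implicit relation as a scalar Riccati ODE $\dot\rho_d=2d\rho_d(1-\rho_d\Sigma_d)$ for $\rho_d=e^{-2\phi_d}$, and proving the uniform bound by a clean barrier (first-exit) comparison: the exponential super-solution $2e^{(8L_0 d/m_d)\gamma}$ traps $\rho_d$ from above (because at the first crossing $G_d\le\tfrac12\rho_d$ would force $\dot\rho_d<0$), and the constant $1/(2L_0)$ traps it from below; since $d/m_d$, $d/m_d^*$ and $\lambda_d$ are $O(1)$, this gives a $d$-uniform compact range for $\rho_d$, hence for $\phi_d$. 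Your second barrier argument on $u=\rho_d\Sigma_d-1$ (using the now-uniform bounds on $\rho_d,F_d,G_d,\lambda_d$ to show $|\rho_d\dot\Sigma_d|\le M$, then $-2d(1+u)u$ dominates) yields $|u|=O(1/d)$ and hence $|\dot\phi_d|=d|u|=O(1)$. This last step actually strengthens what the paper proves, upgrading the $O(\sqrt d)$ estimate to a genuine uniform Lipschitz bound, which then gives equicontinuity trivially. (One very minor point: in the $u$-barrier the inequality $\dot u\le 0$ at the first crossing is only nonstrict; replacing the threshold by $M/(2dc_0)+\varepsilon$ and letting $\varepsilon\to0$ closes it.) What each approach buys: the paper's version is shorter given that it has already established \cref{lemma:phi1} for other purposes; yours is self-contained, avoids the $\sqrt d$ detour, and produces the sharper derivative bound $\sup_{[0,T]}|\dot\phi_d|=O(1)$, which would in fact simplify the subsequent use of $u_d=\gamma+\phi_d/d$ in the proof of \cref{lemma:phi4}.
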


This lemma will be proven in \cref{App:proofLemma1}. Indeed, the proof is long and do not carry many elements of interest. The main idea is to start from the result of \cref{lemma:phi1} and use it in the self-consistent equation solved by $\phi_d$, which we prove to be:
\begin{equation} \label{eq:C3selfphi}
    \phi_d(\gamma) + \gamma d = \frac{1}{4} \tr \log \left( \left(1 + \frac{4d}{m_d} \int_0^\gamma e^{-2 \phi_d(s)} \right) I_{m_d} + \frac{4d}{m_d} \int_0^\gamma e^{-2 \chi(s)} \big( e^{4s d/m_d^*} - 1 \big) \d s \ U_d^{0T} U_d^* U_d^{*T} U_d^0 \right).
\end{equation}

\vspace*{-0.2 cm}
\subsubsection{Identifying the limit}

The previous lemma shows that the family $(\phi_d)_{d \in \mathbb{N}}$ is compact in $\mathcal{C}_T$, i.e.,  it allows us to extract converging subsequences. The following lemmas will show that those subsequences always converge to the same function. 
\begin{lemma} \label{lemma:phi4}
    Let $\mu$ be defined as in equation \eqref{eq:manova} and $\phi : [0,T] \rightarrow \R$ a subsequential limit of $(\phi_d)_{d \in \mathbb{N}}$ in $\mathcal{C}_T$. Then, with probability one, for all $\gamma \in [0,T]$:
    \begin{equation} \label{eq:C3unique}
        \int \log \left( 1 + \frac{4}{\alpha} \int_0^\gamma e^{-2 \phi(s)} \d s + \frac{4x}{\alpha} \int_0^\gamma e^{-2 \phi(s)} \big( e^{4s / \alpha^*} - 1 \big) \d s \right) \d \mu(x) = \frac{4 \gamma}{\alpha}.
    \end{equation}
\end{lemma}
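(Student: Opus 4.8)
The plan is to pass to the limit in the self-consistent equation \eqref{eq:C3selfphi} along a convergent subsequence. Suppose $\phi_{d_k} \to \phi$ in $\mathcal{C}_T$. Rewriting \eqref{eq:C3selfphi} and dividing by $m_{d_k}$, one obtains
\begin{equation*}
    \frac{\phi_{d_k}(\gamma) + \gamma d_k}{m_{d_k}} = \frac{1}{4 m_{d_k}} \tr \log \left( a_{d_k}(\gamma) I_{m_{d_k}} + b_{d_k}(\gamma) Y_{d_k} \right),
\end{equation*}
where $a_{d_k}(\gamma) = 1 + \frac{4 d_k}{m_{d_k}} \int_0^\gamma e^{-2\phi_{d_k}(s)} \d s$ and $b_{d_k}(\gamma) = \frac{4 d_k}{m_{d_k}} \int_0^\gamma e^{-2\phi_{d_k}(s)}(e^{4 s d_k / m_{d_k}^*} - 1) \d s$, with $Y_{d_k} = U_{d_k}^{0T} U_{d_k}^* U_{d_k}^{*T} U_{d_k}^0$. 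The left-hand side converges to $\gamma / \alpha$ since $\phi_{d_k}$ stays bounded (Lemma \ref{lemma:phi2}) and $m_{d_k}/d_k \to \alpha$. For the right-hand side, note that
\begin{equation*}
    \frac{1}{m_{d_k}} \tr \log\left(a_{d_k}(\gamma) I_{m_{d_k}} + b_{d_k}(\gamma) Y_{d_k}\right) = \frac{1}{m_{d_k}} \tr\big( g_{d_k,\gamma}(Y_{d_k}) \big),
\end{equation*}
with $g_{d_k, \gamma}(x) = \log(a_{d_k}(\gamma) + b_{d_k}(\gamma) x)$, so the problem reduces to applying the convergence of the empirical spectral distribution of $Y_{d_k}$ to $\mu$.

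\textbf{The key technical point} is that the functions $g_{d_k,\gamma}$ depend on $k$ (through $a_{d_k}, b_{d_k}$), so the plain convergence $\frac{1}{m_d}\tr f(Y_d) \to \int f \d\mu$ for a fixed $f$ is not enough; this is exactly why Lemma \ref{lemma:largedev} was established. First I would check that $a_{d_k}(\gamma) \to a(\gamma) := 1 + \frac{4}{\alpha}\int_0^\gamma e^{-2\phi(s)}\d s$ and $b_{d_k}(\gamma) \to b(\gamma) := \frac{4}{\alpha}\int_0^\gamma e^{-2\phi(s)}(e^{4 s/\alpha^*} - 1)\d s$ uniformly in $\gamma \in [0,T]$ — this follows from uniform convergence of $\phi_{d_k}$, the bound $d_k/m_{d_k}^* \to 1/\alpha^*$, and dominated convergence for the integrals. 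Since $a_{d_k}(\gamma), b_{d_k}(\gamma) \geq $ some positive constant depending only on $T$ and the uniform bound on $\phi_{d_k}$ (recall $a_{d_k} \geq 1$ and $b_{d_k} \geq 0$, and $Y_{d_k}$ has spectrum in $[0,1]$), the arguments $a_{d_k}(\gamma) + b_{d_k}(\gamma) x$ stay in a fixed compact subinterval of $(0,\infty)$ for $x \in [0,1]$, uniformly in $k$ and $\gamma \in [0,T]$; hence $\log$ is uniformly continuous there, and the family $\mathcal{F} = \{ x \mapsto \log(a(\gamma) + b(\gamma)x) : \gamma \in [0,T]\} \cup \{g_{d_k,\gamma}\}$ is relatively compact in $\mathcal{C}([0,1],\R)$ for $\|\cdot\|_\infty$. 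Applying Lemma \ref{lemma:largedev} to (the closure of) this family yields $\sup_{f \in \mathcal{F}} |\frac{1}{m_d}\tr f(Y_d) - \int f \d\mu| \to 0$ almost surely. Combining with $\|g_{d_k,\gamma} - \log(a(\gamma) + b(\gamma)\cdot)\|_\infty \to 0$, I get
\begin{equation*}
    \frac{1}{m_{d_k}}\tr\big(g_{d_k,\gamma}(Y_{d_k})\big) \xrightarrow[k \to \infty]{} \int \log\big(a(\gamma) + b(\gamma)x\big)\d\mu(x)
\end{equation*}
almost surely, uniformly in $\gamma \in [0,T]$.

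\textbf{Finally}, equating the two limits gives $\frac{\gamma}{\alpha} = \frac{1}{4}\int \log(a(\gamma) + b(\gamma) x)\d\mu(x)$ for every $\gamma \in [0,T]$, which is precisely \eqref{eq:C3unique} after noting that $a(\gamma) + b(\gamma) x = 1 + \frac{4}{\alpha}\int_0^\gamma e^{-2\phi(s)}\d s + \frac{4x}{\alpha}\int_0^\gamma e^{-2\phi(s)}(e^{4s/\alpha^*}-1)\d s$. The main obstacle is the $k$-dependence of the integrand noted above; once the uniform-in-$\gamma$ compactness of the family of functions being traced against $Y_{d_k}$ is set up correctly, Lemma \ref{lemma:largedev} does the rest, and the remaining manipulations (interchanging limits, dominated convergence for the coefficients) are routine. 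A minor point to be careful about is that $T$ is arbitrary, so the conclusion holds on all of $\R_+$; and that the almost-sure event from Lemma \ref{lemma:largedev} can be taken independent of $\gamma$ and of the subsequence, since it is the single full-probability event on which the spectral convergence of $Y_d$ holds.
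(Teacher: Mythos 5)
Your argument is correct in substance and rests on the same two pillars as the paper's proof: the uniform (over a compact family) convergence of the empirical spectral distribution of $Y_d$ established in \cref{lemma:largedev}, and the uniform bounds on $\phi_d$ from \cref{lemma:phi2}. The organization differs, though: the paper first proves \cref{lemma:phi3}, a uniform convergence $\sup_{\xi\in B_c}\|\H_d(\xi)-\H(\xi)\|_T\to 0$ over the whole set of lower-bounded $\xi$, and then deduces \cref{lemma:phi4} by a triangle inequality combined with Lipschitz continuity of $\H$. You bypass \cref{lemma:phi3} and instead pass to the limit directly along the subsequence, handling the $\sigma_d\to\sigma$ convergence, the $\phi_{d_k}\to\phi$ convergence, and the spectral convergence all in one step via the uniform convergence of the coefficients $a_{d_k},b_{d_k}$. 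Both routes are valid; the paper's factoring is more modular (the auxiliary lemma is stated at a level of generality that makes it reusable and keeps the $\xi$-dependence explicit), while yours is more direct for the task at hand.

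One point deserves a little more care: you propose applying \cref{lemma:largedev} ``to (the closure of) this family,'' but the family $\mathcal{F}$ you define includes the random functions $g_{d_k,\gamma}$ (through $\phi_{d_k}$, which depends on the realization of the flow), so its closure is itself random, whereas \cref{lemma:largedev} requires a deterministic compact $\mathcal{F}$. The fix is implicit in your own bounds: since the pairs $(a_{d_k}(\gamma), b_{d_k}(\gamma))$ and $(a(\gamma),b(\gamma))$ all live in a deterministic rectangle $K\subset[1,\infty)\times[0,\infty)$ (determined by $T$, $\alpha$, $\alpha^*$ and the deterministic bound on $\phi_d$ from \cref{lemma:phi2}), you should apply \cref{lemma:largedev} to the deterministic compact family $\{x\mapsto\log(a+bx):(a,b)\in K\}$, which contains every function you need. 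With that adjustment, the rest of your argument goes through.
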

\begin{proof}
For $\xi \in \mathcal{C}_T$, we set:
\begin{equation*}
    \H_d(\xi)(\gamma) = \frac{1}{4d} \tr \log \left( \left( 1 + \frac{4d}{m_d} \int_0^\gamma e^{-2 \xi(s)} \d s \right) I_{m_d} + \frac{4d}{m_d} \int_0^\gamma e^{-2 \xi(s)} \big( e^{4s d/m_d^*} - 1 \big) \d s \, Y_d \right).
\end{equation*}
From equation \eqref{eq:C3selfphi}, we have that, for all $\gamma \geq 0$: 
\begin{equation} \label{eq:C3phiu}
    \H_d(\phi_d)(\gamma) = \gamma + \frac{\phi_d(\gamma)}{d} \equiv u_d(\gamma).
\end{equation}
From the bound obtained in \cref{lemma:phi1}, we have that $\sup_{\gamma \in [0,T]} | \phi_d(\gamma) | \leq \kappa T \sqrt{d}$, therefore $u_d(\gamma) \xrightarrow[d \to \infty]{} \gamma \equiv u(\gamma)$ uniformly on $[0,T]$. We know that the empirical spectral distribution of $Y_d$ converges towards $\mu$ defined in equation \eqref{eq:manova}. Therefore, we also define:
\begin{equation} \label{eq:C3defH}
    \H(\xi)(\gamma) = \frac{\alpha}{4} \int \log  \left( 1 + \frac{4}{\alpha} \int_0^\gamma e^{-2 \xi(s)} \d s  + \frac{4x}{\alpha} \int_0^\gamma e^{-2 \xi(s)} \big( e^{4s / \alpha^*} - 1 \big) \d s \right) \d \mu(x).
\end{equation}
It is reasonable to hope that $\H_d(\xi) \longrightarrow \H(\xi)$ from the convergence of $Y_d$ and the fact that $m_d/d \longrightarrow \alpha$ and $m_d^*/d \longrightarrow \alpha^*$ as $d \to \infty$. To do so, we introduce the following lemma that we prove in \cref{App:proofphi3}:

\begin{lemma} \label{lemma:phi3}
    For $c \in \R$, let $B_c = \big\{ \xi \in \mathcal{C}_T, \, \xi \geq c \big\}$. Then, with probability one:
    \begin{equation} \label{eq:C3uniformCV}
    \sup_{\xi \in B_c} \big\| \H_d(\xi) - \H(\xi) \big\|_T \xrightarrow[d \to \infty]{} 0.
    \end{equation}
\end{lemma}
The main ingredient of this result is the uniform convergence obtained in \cref{lemma:largedev} for the empirical spectral distribution of the matrix $Y_d$. However, we need to be careful since there are other quantities depending on the dimension. 

Now, to prove \cref{lemma:phi4}, we take an extraction $d(n)$ and suppose that $\phi_{d(n)}$ converges towards some $\phi$ is $\mathcal{C}_T$. From equation \eqref{eq:C3phiu}, we have that $\H_{d(n)}( \phi_{d(n)}) = u_{d(n)} \xrightarrow[n \to \infty]{} u$. We are going to show that with probability one, $\H(\phi) = u$. This will give the desired result from the definition of $\H$ in equation \eqref{eq:C3defH}. We have:
\begin{equation*}
\big\| \H(\phi) - u \big\|_T \leq \big\| \H(\phi) - \H(\phi_{d(n)}) \big\|_T + \big\| \H(\phi_{d(n)}) - \H_{d(n)}(\phi_{d(n)})\big\|_T + \big\| \H_{d(n)}(\phi_{d(n)}) - u \big\|_T.
\end{equation*}
We already know that the last term goes to zero. For the first one, we need to show that $\H$ is continuous. Indeed, for $\xi, \varsigma \in \mathcal{C}_T$ both lower bounded by some $c \in \R$:
\begin{equation*}
\begin{aligned}
    \big| \H(\xi)(\gamma) - \H(\varsigma)(\gamma) \big| &\leq \frac{\alpha}{4} \sup_{x \in [0,1]} \Bigg| \log \left(  1 + \frac{4}{\alpha} \int_0^\gamma e^{-2 \xi(s)} \d s  + \frac{4x}{\alpha} \int_0^\gamma e^{-2 \xi(s)} \big( e^{4s / \alpha^*} - 1 \big) \d s \right) \\
    &- \log \left(  1 + \frac{4}{\alpha} \int_0^\gamma e^{-2 \varsigma(s)} \d s  + \frac{4x}{\alpha} \int_0^\gamma e^{-2 \varsigma(s)} \big( e^{4s / \alpha^*} - 1 \big) \d s \right) \Bigg| \\
    &\leq \sup_{x \in [0,1]} \int_0^\gamma \big| e^{-2 \xi(s)} - e^{-2 \varsigma(s)} \big| \d s + x \int_0^\gamma  e^{4s / \alpha^*} \big| e^{-2 \xi(s)} - e^{-2 \varsigma(s)} \big| \d s \\
    &\leq 2e^{-2c} T (1 + e^{4T / \alpha^*}) \big\| \xi - \varsigma \|_T.
\end{aligned}
\end{equation*}
Since by \cref{lemma:phi2}, $(\phi_d)_{d \in \mathbb{N}}$ is uniformly bounded on $[0,T]$, it is in $B_c$ for some $c \in \R$. This implies that the first term goes to zero. From equation \eqref{eq:C3uniformCV}, this also implies that the second term goes to zero with probability one. As a conclusion, we necessarily have that $\H(\phi) = u$ which proves the lemma. 
\end{proof}

\begin{lemma} \label{lemma:phi5}
    Equation \eqref{eq:C3unique} has a unique continuous solution on $\R^+$. 
\end{lemma}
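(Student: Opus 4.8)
The plan is to reduce equation \eqref{eq:C3unique} to a closed, well-posed ordinary differential system for a pair of auxiliary functions and then conclude by Grönwall's lemma. Using the functions $F_\phi,G_\phi$ introduced in \cref{PropPhi}, a short computation shows that the argument of the logarithm in \eqref{eq:C3unique} equals $(1-x)F_\phi(\gamma) + x G_\phi(\gamma)$, since $G_\phi - F_\phi = \tfrac{4}{\alpha}\int_0^\gamma e^{-2\phi(s)}(e^{4s/\alpha^*}-1)\,\d s$; hence the equation reads
\begin{equation*}
    \int_0^1 \log\big((1-x) F_\phi(\gamma) + x G_\phi(\gamma)\big)\, \d\mu(x) = \frac{4\gamma}{\alpha}.
\end{equation*}
Any continuous solution $\phi$ makes $F_\phi,G_\phi$ continuously differentiable, with $F_\phi' = \tfrac{4}{\alpha}e^{-2\phi}$, $G_\phi' = F_\phi'\,e^{4\gamma/\alpha^*}$, $F_\phi(0)=G_\phi(0)=1$ and $1\le F_\phi\le G_\phi$. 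First I would differentiate the displayed identity in $\gamma$; this is legitimate because on any compact $[0,T]$ the $\gamma$-derivative of the integrand is dominated, uniformly in $x\in[0,1]$, by $|F_\phi'|+|G_\phi'|$ (the denominator being $\ge 1$). Substituting the expressions for $F_\phi'$ and $G_\phi'$ and factoring out $e^{-2\phi(\gamma)}$ turns this into $F_\phi'(\gamma)\,J\big(F_\phi(\gamma),G_\phi(\gamma),\gamma\big)=\tfrac{4}{\alpha}$, with
\begin{equation*}
    J(F,G,\gamma) = \int_0^1 \frac{(1-x) + x\,e^{4\gamma/\alpha^*}}{(1-x)F + xG}\, \d\mu(x),
\end{equation*}
so that $(F_\phi,G_\phi)$ solves the closed initial value problem $F'=\tfrac{4}{\alpha J(F,G,\gamma)}$, $G'=\tfrac{4e^{4\gamma/\alpha^*}}{\alpha J(F,G,\gamma)}$, $F(0)=G(0)=1$. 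Conversely a solution $(F,G)$ of this system gives a continuous solution of \eqref{eq:C3unique} via $e^{-2\phi}=\tfrac{\alpha}{4}F'$ (note $F'>0$ since $0<J<\infty$); and since \cref{lemma:phi2} together with \cref{lemma:phi4} already furnish the existence of at least one continuous solution, it only remains to establish uniqueness for the system.

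To that end, fix $T>0$ and let $(F,G)$ be any solution on $[0,T]$. Because $F,G\ge 1$ force $J(F,G,\gamma)\ge 1/\max(F,G)=1/G$ (using $\int_0^1((1-x)+xe^{4\gamma/\alpha^*})\,\d\mu\ge 1$), one gets $G'=F'\,e^{4\gamma/\alpha^*}\le \tfrac{4}{\alpha}e^{4\gamma/\alpha^*}G$, whence by Grönwall $G(\gamma)\le \exp\!\big(\tfrac{\alpha^*}{\alpha}(e^{4\gamma/\alpha^*}-1)\big)=:R_T$ and $F\le G\le R_T$ on $[0,T]$, with $R_T$ independent of the particular solution; this a priori bound also shows no finite-time blow-up, so maximal solutions are global on $\R^+$. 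On the compact set $\{1\le F\le R_T,\ 1\le G\le R_T\}\times[0,T]$ the quantity $J$ is bounded below by $1/R_T$ and is $C^1$ in $(F,G)$ with partial derivatives bounded by $e^{4T/\alpha^*}$ (again differentiating under the integral is allowed since the denominators stay $\ge 1$), so $(F,G)\mapsto 1/J(F,G,\gamma)$ is Lipschitz there with a constant independent of $\gamma\in[0,T]$. Then, given two solutions $(F_1,G_1)$ and $(F_2,G_2)$, the function $\delta(\gamma)=|F_1(\gamma)-F_2(\gamma)|+|G_1(\gamma)-G_2(\gamma)|$ satisfies $\delta'\le C_T\,\delta$ with $\delta(0)=0$, so $\delta\equiv 0$ on $[0,T]$; as $T$ is arbitrary, $F_1\equiv F_2$ and $G_1\equiv G_2$ on $\R^+$, and therefore $e^{-2\phi_1}=\tfrac{\alpha}{4}F_1'=\tfrac{\alpha}{4}F_2'=e^{-2\phi_2}$, i.e.\ $\phi_1=\phi_2$.

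The argument is largely routine once the reduction to the $(F,G)$-system is in place. The steps requiring genuine care are the justifications of differentiation under the integral sign — all handled by the uniform lower bound $(1-x)F+xG\ge 1$ — and, most importantly, the a priori Grönwall bound $G\le R_T$, which is exactly what upgrades the local Lipschitz estimate for $1/J$ into uniqueness on the whole half-line rather than merely on a small interval near $0$.
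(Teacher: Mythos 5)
Your proof is correct, and while it follows the same overall strategy as the paper (differentiate the implicit equation, recast it as an ODE system, prove the system is well-posed globally), you make a genuinely useful change in execution. You work directly with the pair $(F,G)$ and keep the measure $\mu$ under the integral, whereas the paper switches to $(F,J)$ with $J=G/F$ and spells out the integral as $\alpha\log F + (\alpha+\alpha^*-1)^+\log J + \Theta(J-1)$. A concrete advantage of your choice of variables is that your right-hand side $\bigl(4/(\alpha J),\,4e^{4\gamma/\alpha^*}/(\alpha J)\bigr)$ is manifestly well defined whenever $F,G\ge 1$, with $0<J<\infty$ automatically; the paper instead must work on a restricted domain $D$ where $\alpha+\Gamma(J)(e^{4\gamma/\alpha^*}-J)\ne 0$ and explicitly argue that the trajectory never reaches $\partial D$. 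Your two applications of Grönwall (one for the a priori bound $G\le R_T$, one for uniqueness) replace the paper's Picard--Lindelöf-plus-no-blow-up contradiction argument, which is a somewhat more hands-on and arguably more transparent route to the same conclusion. Two cosmetic points to tidy up in a final write-up: you should define $R_T=\exp\bigl(\tfrac{\alpha^*}{\alpha}(e^{4T/\alpha^*}-1)\bigr)$ rather than the $\gamma$-dependent expression, since you want a bound uniform over $[0,T]$; and the Grönwall step should be phrased via the integral inequality $\delta(\gamma)\le\int_0^\gamma C_T\,\delta(s)\,\d s$ rather than $\delta'\le C_T\delta$, because $\delta$ need not be differentiable at crossings --- the conclusion $\delta\equiv 0$ is unchanged.
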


This lemma is technical and uses the Picard-Lindelöf theorem which gives the existence and uniqueness for the solutions of differential equations. First, equation \eqref{eq:C3unique} has to be interpreted as a differential equation, which can be done using equation \eqref{eq:selfphi}. We prove this lemma in \cref{App:proofphi5}. 

The proof of \cref{PropPhi} is now elementary. From \cref{lemma:phi2} and \cref{lemma:phi3}, the family $(\phi_d)_{d \in \mathbb{N}}$ is compact in $\mathcal{C}_T$. Thus, it admits at least one subsequential limit by Arzelà-Ascoli theorem. From \cref{lemma:phi4}, such a limit verifies $\H(\phi) = u$ with probability one. Since this equation admits a unique solution from \cref{lemma:phi5}, $(\phi_d)_{d \in \mathbb{N}}$ has a single subsequential limit with probability one. Thus, outside the event of zero probability:
\begin{equation*}
\left\{ \sup_{\xi \in B_c} \big\| \H_d(\xi) - \H(\xi) \big\|_T \underset{d \to \infty}{\not\longrightarrow} 0 \right\},
\end{equation*}
$(\phi_d)_{d \in \mathbb{N}}$ uniformly converges on $[0,T]$. Here $c \in \R$ is such that $(\phi_d)_{d \in \mathbb{N}}$ is contained in $B_c$ (exists and independent of the chosen extraction from \cref{lemma:phi2}). To conclude, taking a sequence $T_n \xrightarrow[n \to \infty]{} \infty$, this proves that almost surely, $(\phi_d)_{d \in \mathbb{N}}$ uniformly converges on each $[0,T_n]$, thus on every compact of $\R^+$. From \cref{lemma:phi4}, the limit function $\phi$ is solution of equation \eqref{eq:C3unique}. Replacing $\mu$ from its definition in equation \eqref{eq:manova} leads to equation \eqref{eq:selfphi} which concludes the proof of \cref{PropPhi}. 

\subsection{Proof of Proposition \ref{PropOverlap}} \label{ProofOverlap}

\cref{PropOverlap} is decomposed into two parts: first we show the uniform convergence of the overlap between the teachers and students at timescale $t = \gamma d$. Then we derive the limit of the overlap as $\gamma \to \infty$. Note that our proof method will also give access to the speed at which the convergence occurs as $\gamma \to \infty$. This will be to be compared with the convergence rates we obtained at finite dimension (see \cref{PropCVRate}).

\subsubsection{High-dimensional limit for the overlap}

We now prove the first part of the proposition, namely the overlap uniformly converges as $d \to \infty$. This result is a consequence of \cref{PropPhi} as well as the convergence of the empirical spectral distribution of the matrix $Y_d$ as $d \to \infty$. We first define some quantities that will be useful in the following:
\begin{equation*}
    F_d(\gamma) = 1 + \frac{4d}{m_d} \int_0^\gamma e^{-2 \phi_d(s)} \d s, \hspace{1 cm}  G_d(\gamma) = 1 + \frac{4d}{m_d} \int_0^\gamma e^{-2 \phi_d(s)} e^{4sd / m_d^*} \d s, \hspace{1 cm} J_d(\gamma) = \frac{G_d(\gamma)}{F_d(\gamma)}.
\end{equation*}
Since it has been shown that $\phi_d$ converges as $d \to \infty$, it is easily shown that these quantities also converge as $d \to \infty$. We denote $F, G$ and $J$ their infinite dimensional counterparts. 

\begin{lemma} \label{lemma:overlap1}
Let $\chi_d(\gamma)$ be defined in \cref{PropOverlap}. Then:
\begin{equation} \label{eq:C4overlap}
    \chi_d(\gamma) =  \sqrt{\frac{m_d}{m_d^*}} e^{4 \gamma d /m_d^* } \frac{m_d^{-1} \tr \Big( Y_d \big(I_{m_d} + (J_d(\gamma)-1) Y_d\big)^{-1} \Big)}{\sqrt{m_d^{-1} \tr \Big( \big(I_{m_d} + (e^{4\gamma d / m_d^*} - 1) Y_d \big)^2 \big(I_{m_d} + (J_d(\gamma)-1) Y_d \big)^{-2} \Big)}},
\end{equation}
with $Y_d = U_d^{0T} U_d^* U_d^{*T} U_d^0$. 
\end{lemma}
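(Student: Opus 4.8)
The plan is to insert $t = \gamma d$ into the closed-form solution of \cref{Prop:Analytic} and use \cref{AssumptionHighdim} to reduce every matrix appearing there to a polynomial in $Y_d$ via the orthonormality identity \eqref{eq:exponentialorthonormal}. Since $\tr(Z_d^*) = 1$ under our assumptions, one has $M^*(u) = e^{-\psi_d(u)} e^{u} e^{2 Z_d^* u} W_d^0$ with $Z_d^* = \tfrac{1}{m_d^*} U_d^* U_d^{*T}$, and because $\psi_d(\gamma d) = \phi_d(\gamma) + \gamma d$ by \eqref{eq:phi}, the scalar prefactor satisfies $e^{-2 \psi_d(u)} e^{2u} = e^{-2 \phi_d(u/d)}$ for all $u \ge 0$. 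Applying \eqref{eq:exponentialorthonormal} with $4 Z_d^* u = \tfrac{4u}{m_d^*} U_d^* U_d^{*T}$ gives $W_d^{0T} e^{4 Z_d^* u} W_d^0 = \tfrac{1}{m_d}\bigl(I_{m_d} + (e^{4u/m_d^*}-1) Y_d\bigr)$, whence $M^*(u)^T M^*(u) = \tfrac{e^{-2 \phi_d(u/d)}}{m_d}\bigl(I_{m_d} + (e^{4u/m_d^*}-1) Y_d\bigr)$.

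Next I would evaluate $N_d(\gamma) := I_{m_d} + 4 \int_0^{\gamma d} M^*(u)^T M^*(u)\,\d u$. Substituting $u = s d$ turns the integral into $\tfrac{4d}{m_d}\int_0^\gamma e^{-2\phi_d(s)}\bigl(I_{m_d} + (e^{4 s d/m_d^*}-1) Y_d\bigr)\d s$; recognizing $\tfrac{4d}{m_d}\int_0^\gamma e^{-2\phi_d(s)}\d s = F_d(\gamma) - 1$ and $\tfrac{4d}{m_d}\int_0^\gamma e^{-2\phi_d(s)} e^{4 s d/m_d^*}\d s = G_d(\gamma) - 1$, the coefficient of $Y_d$ is $G_d(\gamma) - F_d(\gamma)$, so $N_d(\gamma) = F_d(\gamma) I_{m_d} + (G_d(\gamma)-F_d(\gamma)) Y_d = F_d(\gamma)\bigl(I_{m_d} + (J_d(\gamma)-1) Y_d\bigr)$. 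Since $Y_d$ has spectrum in $[0,1]$ and $F_d(\gamma), G_d(\gamma) \ge 1$, the matrix $P_d(\gamma) := I_{m_d} + (J_d(\gamma)-1) Y_d$ has spectrum $\{(1-y) + J_d(\gamma) y : y \in [0,1]\} \subset (0,\infty)$, hence is invertible, and $Z_d(\gamma d) = \tfrac{1}{F_d(\gamma)}\, M^*(\gamma d)\, P_d(\gamma)^{-1}\, M^*(\gamma d)^T$.

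It then remains to compute the three scalars in the overlap. The denominator factor $\|Z_d^*\|_F = \tfrac{1}{m_d^*}\|U_d^* U_d^{*T}\|_F = m_d^{*-1/2}$ is immediate. For the numerator, cyclicity of the trace gives $\tr(Z_d(\gamma d)Z_d^*) = \tfrac{1}{F_d(\gamma)}\tr\bigl(P_d(\gamma)^{-1} M^*(\gamma d)^T Z_d^* M^*(\gamma d)\bigr)$, and using that $U_d^* U_d^{*T}$ is a projection, so that $Z_d^* e^{4 Z_d^* \gamma d} = \tfrac{e^{4\gamma d/m_d^*}}{m_d^*} U_d^* U_d^{*T}$, one finds $M^*(\gamma d)^T Z_d^* M^*(\gamma d) = \tfrac{e^{-2\phi_d(\gamma)} e^{4\gamma d/m_d^*}}{m_d m_d^*} Y_d$. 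For the Frobenius norm, $\|Z_d(\gamma d)\|_F^2 = \tr(Z_d(\gamma d)^2)$, and applying cyclicity together with $M^*(\gamma d)^T M^*(\gamma d) = \tfrac{e^{-2\phi_d(\gamma)}}{m_d}\bigl(I_{m_d} + (e^{4\gamma d/m_d^*}-1) Y_d\bigr)$ and the fact that $P_d(\gamma)$, $I_{m_d} + (e^{4\gamma d/m_d^*}-1) Y_d$ and $M^*(\gamma d)^T M^*(\gamma d)$ are all polynomials in $Y_d$ and hence commute, one gets $\|Z_d(\gamma d)\|_F^2 = \tfrac{e^{-4\phi_d(\gamma)}}{F_d(\gamma)^2 m_d^2}\tr\bigl((I_{m_d}+(e^{4\gamma d/m_d^*}-1)Y_d)^2 P_d(\gamma)^{-2}\bigr)$. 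Forming the ratio $\tr(Z_d(\gamma d)Z_d^*)/(\|Z_d(\gamma d)\|_F \|Z_d^*\|_F)$, the factors $e^{-2\phi_d(\gamma)}/F_d(\gamma)$ cancel, the powers of $m_d$ and $m_d^*$ rearrange into the prefactor $\sqrt{m_d/m_d^*}$ together with the $m_d^{-1}$ normalizations inside the two traces, and \eqref{eq:C4overlap} follows; positivity of $Z_d(\gamma d)$ and $Z_d^*$ makes $\tr(Z_d(\gamma d) Z_d^*) \ge 0$, so no absolute value is needed. The only point requiring care is the bookkeeping of the $m_d, m_d^*$ normalizations and the exponential factors through these cancellations; the trace manipulations themselves are routine precisely because all matrices in sight are simultaneously functions of $Y_d$.
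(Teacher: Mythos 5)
Your proposal is correct and follows essentially the same route as the paper's proof: start from the closed form of \cref{Prop:Analytic}, rescale time by $t = \gamma d$, use the orthonormality/projection identities \eqref{eq:exponentialorthonormal} and $\psi_d(\gamma d) = \phi_d(\gamma) + \gamma d$ to reduce all matrices to polynomials in $Y_d$, and then compute the three scalars $\tr(Z_d Z_d^*)$, $\|Z_d\|_F$, $\|Z_d^*\|_F$ via cyclicity of the trace and commutativity of functions of $Y_d$. Your presentation, which keeps the $m_d \times m_d$ matrix $M^*(\gamma d)$ and uses cyclicity systematically rather than writing out the $d\times d$ form of $Z_d(\gamma d)$ as the paper does, is a cosmetic variant of the same calculation; the bookkeeping of $m_d, m_d^*$ and the exponential factors also checks out.
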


Before proving this lemma, we observe that at finite dimension, the overlap mainly depends on two quantities: the matrix $Y_d$ (which is reasonable due to our orthonormality assumption) and the function $J_d(\gamma)$. This function also converges as $d \to \infty$, therefore this previous equation suggests that the overlap will also converge. 

\begin{proof}
Using the implicit solution obtain in \cref{Prop:Analytic}:
\begin{equation*}
    Z_d(t) = e^{-2 \psi_d(t)} e^{2 \tr(Z_d^*)t} e^{2Z_d^*t} W^0_d \left( I_{m_d} + 4 \int_0^t e^{-2 \psi_d(t)} e^{2s \tr(Z_d^*)} W_d^{0T} e^{4 Z_d^* s} W_d^0 \d s \right)^{-1} W_d^{0T} e^{2Z_d^*t}.
\end{equation*}
Introducing $\gamma = t/d$ and using the definitions of $\phi_d(\gamma)$, $U_d^0$, $U_d^*$:
\begin{equation*}
    Z_d(\gamma d) = \frac{1}{m_d} e^{-2 \phi_d(\gamma)} \exp \left( \frac{2d \gamma}{m_d^*} U_d^* U_d^{*T} \right) U_d^0 N_d(\gamma)^{-1} U_d^{0T} \exp \left( \frac{2d \gamma}{m_d^*} U_d^* U_d^{*T} \right) ,
\end{equation*}
with:
\begin{equation*}
\begin{aligned}
    N_d(\gamma) &= \left( 1 + \frac{4d}{m_d} \int_0^\gamma e^{-2 \phi_d(s)} \d s \right) I_{m_d} + \frac{4d}{m_d} \int_0^\gamma e^{-2 \phi_d(s)} \big( e^{4sd/ m_d^*} - 1 \big) \d s \ Y_d \\
    &\equiv F_d(\gamma) + \big( G_d(\gamma) - F_d(\gamma) \big) Y_d.
\end{aligned}
\end{equation*}
Now using the fact that $U_d^* U_d^{*T}$ is a projection:
\begin{equation*}
\exp \left( \frac{2d \gamma}{m_d^*} U_d^* U_d^{*T} \right) = I_d - U_d^* U_d^{*T} + \exp \left( \frac{2d \gamma}{m_d^*} \right) U_d^*U_d^{*T}.
\end{equation*}
And denoting $\lambda_d^* = d/m_d^*$, we obtain:
\begin{equation} \label{eq:C4formula1}
\begin{aligned}
    \tr \big( Z_d^* Z_d(\gamma d) \big) &= \frac{1}{m_d m_d^*} e^{-2 \phi_d(\gamma)} \tr \Big( U_d^* U_d^{*T} \big( I_d + (e^{2 \lambda_d^* \gamma} - 1) U_d^* U_d^{*T} \big) U_d^0 N_d(\gamma)^{-1} U_d^{0T} \big( I_d + (e^{2 \lambda_d^* \gamma} - 1) U_d^* U_d^{*T} \big) \Big) \\
    &= \frac{1}{m_d m_d^*} e^{-2 \phi_d(\gamma)} e^{4 \lambda_d^* \gamma} \tr \Big( Y_d \big( F_d(\gamma) I_{m_d} + \big( G_d(\gamma) - F_d(\gamma) \big) Y_d \big)^{-1} \Big).
\end{aligned}
\end{equation}
Now, to compute the overlap, one has to compute the norms $\big\|Z_d^*\big\|_F$ and $\big\| Z_d(\gamma d) \big\|_F$. The first one is easily done:
\begin{equation}  \label{eq:C4formula2}
    \big\|Z_d^*\big\|_F^2 = \frac{1}{m_d^{*2}} \tr \Big( U_d^*U_d^{*T} U_d^*U_d^{*T} \Big) = \frac{1}{m_d^*}.
\end{equation}
For the second one, one can do a similar calculation as for the trace:
\begin{equation}  \label{eq:C4formula3}
    \big\|Z_d(\gamma d) \big\|_F^2 = \frac{1}{m_d^2} e^{-4 \phi_d(\gamma)} \tr \Big( \big(I_{m_d} + (e^{4 \lambda_d^* \gamma} - 1 ) Y_d \big)^2 \big( F_d(\gamma) I_{m_d} + \big( G_d(\gamma) - F_d(\gamma) \big) Y_d \big)^{-2} \Big).
\end{equation}
Assembling equation \eqref{eq:C4formula1}, \eqref{eq:C4formula2} and \eqref{eq:C4formula3} gives the desired result. 
\end{proof}

The goal is now to prove the uniform convergence of the overlap in the high-dimensional limit. This is achieved by the following lemma:
\begin{lemma} \label{lemma:overlap2}
    Let $T > 0$. Then, uniformly on $[0,T]$:
    \begin{equation*}
    \begin{aligned}
    \frac{1}{m_d} \tr \Big( Y_d \big(I_{m_d} + (J_d(\gamma)-1) Y_d\big)^{-1} \Big) &\xrightarrow[d \to \infty]{} \int \frac{x}{1 + (J(\gamma) - 1)x} \d \mu(x) \equiv A(\gamma) \\
    \frac{1}{m_d} \tr \Big( \big(I_{m_d} + (e^{4 \lambda_d^* \gamma} - 1) Y_d \big)^2 \big(I_{m_d} + (J_d(\gamma)-1) Y_d \big)^{-2} \Big) &\xrightarrow[d \to \infty]{} \int \left( \frac{1 + \big( e^{4 \gamma / \alpha^*} - 1 \big)x}{1 + (J(\gamma)-1)x} \right)^2 \d \mu(x) \\
    &\hspace{3 cm}\equiv B(\gamma).
    \end{aligned}
    \end{equation*}
    As a consequence, uniformly on $[0,T]$:
    \begin{equation} \label{eq:C4limitxi}
        \chi_d(\gamma) \xrightarrow[d \to \infty]{} \sqrt{\frac{\alpha}{\alpha^*}} e^{4 \gamma / \alpha^*} \frac{A(\gamma)}{\sqrt{B(\gamma)}} \equiv \chi(\gamma).
    \end{equation}
\end{lemma}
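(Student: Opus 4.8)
The plan is to rewrite both traces as $\tfrac{1}{m_d}\tr\big(f(Y_d)\big)$ for continuous functions $f$ on $[0,1]$ drawn from a \emph{fixed} compact family, and then apply the uniform large-deviation estimate of \cref{lemma:largedev}. Set $a_d(\gamma)=e^{4\lambda_d^*\gamma}$ (with $\lambda_d^*=d/m_d^*$) and $b_d(\gamma)=J_d(\gamma)$. Since $J_d=G_d/F_d\ge 1$ (because $G_d(\gamma)\ge F_d(\gamma)$, as $e^{4sd/m_d^*}\ge1$), and since \cref{lemma:phi2} provides a uniform lower bound $\phi_d\ge c$ on $[0,T]$, the scalars $F_d,G_d,J_d$ and $e^{4\lambda_d^*\gamma}$ are all bounded by a single constant $R$, uniformly in $d$ and in $\gamma\in[0,T]$. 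For $(a,b)\in[1,R]^2$ define the continuous functions on $[0,1]$
\begin{equation*}
    h_b(x)=\frac{x}{1+(b-1)x}, \qquad g_{a,b}(x)=\left(\frac{1+(a-1)x}{1+(b-1)x}\right)^{2},
\end{equation*}
which are well defined since $1+(b-1)x\ge1$ there. The maps $b\mapsto h_b$ (on $[1,R]$) and $(a,b)\mapsto g_{a,b}$ (on $[1,R]^2$) are continuous into $\big(\mathcal{C}([0,1]),\|\cdot\|_\infty\big)$, so their images $\mathcal{F}_1,\mathcal{F}_2$ are compact in $\mathcal{C}([0,1])$. By functional calculus, $h_{b_d(\gamma)}(Y_d)=Y_d\big(I_{m_d}+(J_d(\gamma)-1)Y_d\big)^{-1}$ and $g_{a_d(\gamma),b_d(\gamma)}(Y_d)=\big(I_{m_d}+(e^{4\lambda_d^*\gamma}-1)Y_d\big)^2\big(I_{m_d}+(J_d(\gamma)-1)Y_d\big)^{-2}$, so the two traces in the statement are exactly $\tfrac1{m_d}\tr\big(h_{b_d(\gamma)}(Y_d)\big)$ and $\tfrac1{m_d}\tr\big(g_{a_d(\gamma),b_d(\gamma)}(Y_d)\big)$.

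Applying \cref{lemma:largedev} to $\mathcal{F}_1$ and $\mathcal{F}_2$ gives, with probability one,
\begin{equation*}
    \sup_{b\in[1,R]}\left|\frac{1}{m_d}\tr\big(h_b(Y_d)\big)-\int h_b\,\d\mu\right|\xrightarrow[d\to\infty]{}0, \qquad \sup_{(a,b)\in[1,R]^2}\left|\frac{1}{m_d}\tr\big(g_{a,b}(Y_d)\big)-\int g_{a,b}\,\d\mu\right|\xrightarrow[d\to\infty]{}0.
\end{equation*}
Since $b_d(\gamma)\in[1,R]$ and $(a_d(\gamma),b_d(\gamma))\in[1,R]^2$ for every $\gamma\in[0,T]$, specializing each supremum at these parameters and then taking $\sup_{\gamma\in[0,T]}$ only decreases it, so $\sup_{\gamma\in[0,T]}\big|\tfrac1{m_d}\tr(h_{b_d(\gamma)}(Y_d))-\int h_{b_d(\gamma)}\,\d\mu\big|\to0$ and likewise for $g$.

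It remains to replace the finite-$d$ parameters by their limits. From \cref{PropPhi}, $\phi_d\to\phi$ uniformly on $[0,T]$; combined with $d/m_d\to1/\alpha$, $d/m_d^*\to1/\alpha^*$ and the uniform bounds on $\phi_d$ from \cref{lemma:phi2}, this gives $F_d\to F$ and $G_d\to G$ uniformly on $[0,T]$, hence $b_d\to J$ and $a_d\to e^{4\gamma/\alpha^*}$ uniformly, with $F,G,J$ as introduced before \cref{lemma:overlap1}. The maps $b\mapsto\int h_b\,\d\mu$ and $(a,b)\mapsto\int g_{a,b}\,\d\mu$ are uniformly continuous on $[1,R]^2$ by dominated convergence, so $\int h_{b_d(\gamma)}\,\d\mu\to A(\gamma)$ and $\int g_{a_d(\gamma),b_d(\gamma)}\,\d\mu\to B(\gamma)$ uniformly on $[0,T]$; together with the previous paragraph this proves the two stated limits. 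Finally, plugging them into the formula of \cref{lemma:overlap1}, using $m_d/m_d^*\to\alpha/\alpha^*$ and $e^{4\lambda_d^*\gamma}\to e^{4\gamma/\alpha^*}$ uniformly, and noting that the integrand defining $B(\gamma)$ is continuous and strictly positive on $[0,1]$ (numerator and denominator both $\ge1$), so that $\inf_{[0,T]}B>0$, the continuity of $u\mapsto\sqrt{u}$ and of division by a quantity bounded away from $0$ yield $\chi_d\to\sqrt{\alpha/\alpha^*}\,e^{4\gamma/\alpha^*}A(\gamma)/\sqrt{B(\gamma)}=\chi$ uniformly on $[0,T]$.

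The main obstacle is the bookkeeping needed to apply \cref{lemma:largedev}, which controls a \emph{fixed} compact family, to the functions $h_{b_d(\gamma)}$, $g_{a_d(\gamma),b_d(\gamma)}$ whose parameters drift with $d$: one must first secure the uniform-in-$(d,\gamma)$ bound $R$ on these parameters (this is where the uniform control of $\phi_d$ in \cref{lemma:phi2} is essential), apply the large-deviation estimate to the $(a,b)$-parametrized family, and only afterwards compose with the uniform convergence of $(a_d,b_d)$ to $(e^{4\gamma/\alpha^*},J)$. The positivity of $B(\gamma)$, needed for the final continuity step, is the other point to verify, but it is immediate.
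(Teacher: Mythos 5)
Your proof is correct and takes essentially the same approach as the paper: both rewrite the traces as $\tfrac{1}{m_d}\tr\big(f(Y_d)\big)$ for continuous $f$ drawn from a compact family, invoke \cref{lemma:largedev}, and then pass to the limit in the drifting parameters $J_d$ and $e^{4\lambda_d^*\gamma}$ using their uniform convergence together with a positive lower bound on $B$. The only cosmetic difference is the order of the triangle inequality: the paper first replaces $J_d(\gamma)$ by $J(\gamma)$ inside the trace via a Lipschitz estimate on $U(x,\cdot)$ and then applies \cref{lemma:largedev} at the fixed parameter $J(\gamma)$, whereas you apply \cref{lemma:largedev} uniformly over the parameter box $[1,R]^2$ first and then swap parameters in the limit integral $\int h_{b_d}\,\d\mu\to\int h_J\,\d\mu$.
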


The proof of this technical lemma is deferred to \cref{App:proofoverlap2}. The main ingredients are the fact that $J_d$ uniformly converges towards $J$, as well as the uniform convergence result for the empirical spectral distribution of the matrix $Y_d$ presented in \cref{lemma:largedev}.

This last formula give the overlap in the high-dimensional limit. This depends on $\alpha, \alpha^*$ (also through the measure $\mu$ defined in equation \eqref{eq:manova}) and on the function $J$. In the following, we obtain an understanding of this function as $\gamma \to \infty$, which leads to the determination of the asymptotic behaviour of the overlap.

\subsubsection{Limit and convergence rate for the overlap}

We now prove the second part of \cref{PropOverlap} and determine in addition at which rate the convergence $\underset{\gamma \to \infty}{\lim} \chi(\gamma)$ occurs. From the result of \cref{lemma:overlap2}, the main challenge is to determine the behaviour of $J(\gamma)$ as $\gamma \to \infty$. This is done in the following lemma. We remind the definitions of $F,G,J$, depending on the function $\phi = \underset{d \to \infty}{\lim} \phi_d$:
\begin{equation} \label{eq:C4defFGJ}
    F(\gamma) = 1 + \frac{4}{\alpha} \int_0^\gamma e^{-2 \phi(s)} \d s \hspace{1 cm} G(\gamma) = 1 + \frac{4}{\alpha} \int_0^\gamma e^{-2 \phi(s)}  e^{4s / \alpha^*} \d s \hspace{1 cm} J(\gamma) = \frac{G(\gamma)}{F(\gamma)}.
\end{equation}

\begin{lemma} \label{lemma:overlap3}
    There exists a constant $\kappa(\alpha, \alpha^*) > 0$ such that:
\begin{equation*}
J(\gamma) \underset{\gamma \to \infty}{\sim}    \left\{ \begin{array}{cc}
\vspace{0.1 cm}
\kappa(\alpha, \alpha^*)  e^{4 \gamma / \alpha^*} & \mathrm{for} \ \alpha < \alpha^* \\
\kappa(\alpha, \alpha^*) \gamma^{-1} e^{4 \gamma / \alpha^*} & \mathrm{for} \ \alpha \geq \alpha^*.
\end{array} \right.
\end{equation*}
\end{lemma}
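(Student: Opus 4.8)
The plan is to read off the asymptotics of $J=G/F$ directly from the self-consistent equation~\eqref{eq:selfphi} for $\phi$, working with the auxiliary function $g(\gamma):=e^{-4\gamma/\alpha^*}G(\gamma)$, so that $e^{-4\gamma/\alpha^*}J(\gamma)=g(\gamma)/F(\gamma)$, and using the identity $F(\gamma)=g(\gamma)+\tfrac{4}{\alpha^*}\int_0^\gamma g(s)\,\d s$, obtained from $G'=e^{4\gamma/\alpha^*}F'$ by integration by parts. Two preliminary facts are needed. First, since $e^{4s/\alpha^*}\le e^{4\gamma/\alpha^*}$ on $[0,\gamma]$ one has $G(\gamma)-1\le e^{4\gamma/\alpha^*}(F(\gamma)-1)$, hence $1\le J\le e^{4\gamma/\alpha^*}$, and $J'=\tfrac{F'}{F}\bigl(e^{4\gamma/\alpha^*}-J\bigr)\ge0$, so $J$ is nondecreasing; moreover $J(\gamma)\to\infty$, because if $J$ stayed bounded then \eqref{eq:selfphi} would give $\alpha\log F(\gamma)=4\gamma+O(1)$, so $F\to\infty$, and then the bound $G(\gamma)\ge e^{4\gamma_0/\alpha^*}\bigl(F(\gamma)-F(\gamma_0)\bigr)$, valid for every fixed $\gamma_0$, would force $J\to\infty$. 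Second, since $\mu$ is a probability measure, $\tfrac{1}{2\pi}\int_{r_-}^{r_+}\tfrac{\sqrt{(r_+-x)(x-r_-)}}{x(1-x)}\,\d x=\min(\alpha,\alpha^*)-(\alpha+\alpha^*-1)^+=:m$, whence the large-argument expansion $\Theta(u)=m\log u+C_\Theta+o(1)$ holds (with $C_\Theta$ an explicit constant), and $(\alpha+\alpha^*-1)^+ + m=\min(\alpha,\alpha^*)$.

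Inserting $\Theta(J-1)=m\log(J-1)+C_\Theta+o(1)=m\log J+C_\Theta+o(1)$ into \eqref{eq:selfphi}, using $G=FJ$ and the coefficient identity just stated, I get the clean relation $4\gamma=(\alpha-\alpha^*)^+\log F(\gamma)+\min(\alpha,\alpha^*)\log G(\gamma)+C_\Theta+o(1)$; substituting $\log G=\tfrac{4\gamma}{\alpha^*}+\log g$ turns this into
\[
\frac{4(\alpha^*-\alpha)^+}{\alpha^*}\,\gamma \;=\; (\alpha-\alpha^*)^+\log F(\gamma)+\min(\alpha,\alpha^*)\log g(\gamma)+C_\Theta+o(1).
\]
Combined with $F=g+\tfrac{4}{\alpha^*}\int_0^\gamma g$, this is all the input required.

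If $\alpha<\alpha^*$, the displayed relation is $\tfrac{4(\alpha^*-\alpha)}{\alpha^*}\gamma=\alpha\log g(\gamma)+C_\Theta+o(1)$, so $g(\gamma)\sim e^{-C_\Theta/\alpha}e^{\beta\gamma}$ with $\beta=\tfrac{4(\alpha^*-\alpha)}{\alpha\alpha^*}>0$; then $\int_0^\gamma g\sim\beta^{-1}g(\gamma)$ (the integral being asymptotically dominated by its upper endpoint, by dominated convergence after $t=\gamma-s$), hence $F\sim\bigl(1+\tfrac{4}{\alpha^*\beta}\bigr)g$ and $e^{-4\gamma/\alpha^*}J=g/F\to\bigl(1+\tfrac{4}{\alpha^*\beta}\bigr)^{-1}=1-\alpha/\alpha^*=:\kappa(\alpha,\alpha^*)>0$, i.e.\ $J\sim\kappa e^{4\gamma/\alpha^*}$. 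If $\alpha\ge\alpha^*$, the relation becomes $(\alpha-\alpha^*)\log F(\gamma)+\alpha^*\log g(\gamma)\to-C_\Theta$, i.e.\ $g(\gamma)=c_0F(\gamma)^{-\theta}(1+o(1))$ with $\theta=\tfrac{\alpha-\alpha^*}{\alpha^*}\ge0$ and $c_0=e^{-C_\Theta/\alpha^*}>0$; one first checks $F\to\infty$ (otherwise $g$ is bounded below, forcing $\int_0^\gamma g\to\infty$ and $F\to\infty$). For $\alpha=\alpha^*$ ($\theta=0$) then $g\to c_0$, so $F\sim\tfrac{4c_0}{\alpha^*}\gamma$ and $g/F\sim\tfrac{\alpha^*}{4\gamma}$; for $\alpha>\alpha^*$ ($\theta>0$) then $g\to0$ forces $F\sim\tfrac{4}{\alpha^*}G_1$ with $G_1:=\int_0^\gamma g$, whence $G_1'=g\sim c_1G_1^{-\theta}$, which integrates to $G_1(\gamma)\sim\bigl((\theta+1)c_1\gamma\bigr)^{1/(\theta+1)}$ and then $g/F=\tfrac{\alpha^*g}{4G_1}\sim\tfrac{\alpha^*}{4(\theta+1)\gamma}$. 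In both sub-cases $\gamma\,e^{-4\gamma/\alpha^*}J=\gamma\,g/F\to\tfrac{\alpha^{*2}}{4\alpha}=:\kappa(\alpha,\alpha^*)>0$, i.e.\ $J\sim\kappa\gamma^{-1}e^{4\gamma/\alpha^*}$. (Both constants are consistent with \cref{PropOverlap}: substituting them into \eqref{eq:C4limitxi} yields $\chi(\gamma)\to\min(\sqrt{\alpha/\alpha^*},1)$.)

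The delicate steps are: (i) the proof that $J\to\infty$, sketched above; (ii) the bootstrap in the case $\alpha>\alpha^*$, i.e.\ passing from the algebraic asymptotic $g=c_0F^{-\theta}(1+o(1))$ together with $F=g+\tfrac{4}{\alpha^*}\int_0^\gamma g$ to the genuine power laws, which requires ruling out $F\to F_\infty<\infty$ and $g\not\to0$ before one may reduce to the scalar relation $G_1'=c_1G_1^{-\theta}(1+o(1))$ and integrate it while controlling the accumulated $o(1)$; and (iii) verifying the expansion of $\Theta$ with the correct constant term $C_\Theta$ (with care near the left endpoint when $\alpha=\alpha^*$, where $r_-=0$), and dealing with the degenerate values $\alpha=1$ or $\alpha^*=1$, where $r_-=r_+$, the continuous part of $\mu$ vanishes and $\Theta\equiv0$ --- there the same argument applies verbatim with $m=0$. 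I expect (ii) to be the real work.
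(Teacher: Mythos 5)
Your proof is correct and follows essentially the same route as the paper: both extract from equation \eqref{eq:selfphi}, via the unit-mass identity for $\mu$ and the expansion $\Theta(u)=m\log u+C_\Theta+o(1)$, the simplified relation $(\alpha-\alpha^*)^+\log F+\min(\alpha,\alpha^*)\log G=4\gamma+O(1)$, then combine it with the integration-by-parts identity linking $F$ and $G$ and split the cases $\alpha<\alpha^*$, $\alpha=\alpha^*$, $\alpha>\alpha^*$; your substitution $g=e^{-4\gamma/\alpha^*}G$ is a cosmetic tidying of the paper's equivalent form $G=\zeta F^{-\nu}e^{4\lambda_m\gamma}\omega(\gamma)$. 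Your explicit constants $\kappa=1-\alpha/\alpha^*$ and $\kappa=\alpha^{*2}/(4\alpha)$ agree with the paper's, and the steps you flag as delicate (proving $J\to\infty$, the integration bootstrap when $\alpha>\alpha^*$, and the constant in the $\Theta$ expansion) are exactly the ones the paper treats with care.
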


This technical lemma is proven in \cref{App:proofoverlap3}. It mainly uses the implicit equation on $\phi$ from \cref{PropPhi}. 

The understanding of the asymptotics of $J$, along with the limit we identified in \cref{lemma:overlap2} allows to determine the behaviour of $\chi(\gamma)$ as $\gamma \to \infty$. This is a stronger version of \cref{PropOverlap} as we also determine the rate at which convergence occurs. Note that, unlike the convergence rates we determine in the finite dimensional case (see \cref{PropCVRate}), the asymptotics we obtain in the following are exact. 

Before obtaining the asymptotic behaviour of $\chi(\gamma)$ as $\gamma \to \infty$, we need a last technical lemma in order to obtain the asymptotics of $A(\gamma), B(\gamma)$ as $\gamma \to \infty$ (defined in \cref{lemma:overlap2}).

\begin{lemma} \label{lemma:overlap4}
    Let $A(\gamma)$, $B(\gamma)$ be defined in \cref{lemma:overlap3}. Then:
    \begin{equation*}
        A(\gamma) \underset{\gamma \to \infty}{=} \left\{ \begin{array}{cc}
        \vspace{0.2 cm}
            \dfrac{\min(\alpha, \alpha^*)}{\alpha J(\gamma)} + O \left( \dfrac{1}{J(\gamma)^2} \right) & \mathrm{for} \  \alpha \neq \alpha^* \\
            \dfrac{1}{J(\gamma)} +  O \left( \dfrac{1}{J(\gamma)^{3/2}} \right)& \mathrm{for} \  \alpha = \alpha^* 
        \end{array} \right. ,
    \end{equation*}
    and:
        \begin{equation*}
        B(\gamma) \underset{\gamma \to \infty}{=} \left\{ \begin{array}{cc}
        \vspace{0.2 cm}
            \left( 1 - \dfrac{\alpha^*}{\alpha} \right)^+ + \dfrac{e^{8 \gamma / \alpha^*}}{J(\gamma)^2} \left(\dfrac{\min(\alpha, \alpha^*)}{\alpha} + O \left( \dfrac{1}{J(\gamma)} \right) \right) & \mathrm{for} \  \alpha \neq \alpha^* \\
            \dfrac{e^{8 \gamma / \alpha^*}}{J(\gamma)^2} \left( 1  +  O \left( \dfrac{1}{J(\gamma)^{1/2}} \right) \right)& \mathrm{for} \  \alpha = \alpha^* .
        \end{array} \right.
    \end{equation*}
    
\end{lemma}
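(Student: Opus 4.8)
The plan is to split the measure $\mu$ from \eqref{eq:manova} into its absolutely continuous part and its two atoms, $\mu = \mu_{\mathrm{ac}} + c_0\delta_0 + c_1\delta_1$ with $c_0 = (1-\alpha^*/\alpha)^+$ and $c_1 = \tfrac1\alpha(\alpha+\alpha^*-1)^+$, and to treat each piece separately in the integrals defining $A(\gamma)$ and $B(\gamma)$ in \cref{lemma:overlap2}. Two structural facts drive the dichotomy. First, $r_- = \big(\sqrt{\alpha(1-\alpha^*)}-\sqrt{\alpha^*(1-\alpha)}\big)^2$ vanishes exactly when $\alpha = \alpha^*$; in that case the density of $\mu_{\mathrm{ac}}$ has an integrable $x^{-1/2}$ singularity at $0$, whereas for $\alpha\neq\alpha^*$ the support of $\mu_{\mathrm{ac}}$ is bounded away from $0$. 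Second, writing $J=J(\gamma)$ and $E=e^{4\gamma/\alpha^*}$, one has $J \le E$ directly from the definitions \eqref{eq:C4defFGJ} (since $e^{4s/\alpha^*}$ is increasing in $s$), while \cref{lemma:overlap3} gives $J\to\infty$. I will also use the mass identities $c_0 + c_1 + \mu_{\mathrm{ac}}([r_-,r_+]) = 1$ and $1-c_0 = \min(\alpha,\alpha^*)/\alpha$.

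For $A(\gamma) = \int \tfrac{x}{1+(J-1)x}\,\d\mu(x)$, I would use the identity $\tfrac{x}{1+(J-1)x} = \tfrac1{J-1}\big(1-\tfrac1{1+(J-1)x}\big)$, so that $A(\gamma) = \tfrac1{J-1}\big(1-\int\tfrac{\d\mu(x)}{1+(J-1)x}\big)$. The atom at $0$ contributes exactly $c_0$, the atom at $1$ contributes $c_1/J$, and the $\mu_{\mathrm{ac}}$-part is $O(1/J)$ when $\alpha\neq\alpha^*$ (bound $\tfrac1{1+(J-1)x}$ by $\tfrac1{1+(J-1)r_-}$ on the support) and $O(1/\sqrt J)$ when $\alpha=\alpha^*$ (rescale $x = u/(J-1)$ near the origin and use $\int_0^\infty \tfrac{\d u}{(1+u)\sqrt u}<\infty$). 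Multiplying $1-\int\tfrac{\d\mu}{1+(J-1)x} = 1-c_0+O(1/J)$ resp. $1+O(1/\sqrt J)$ by $\tfrac1{J-1} = \tfrac1J + O(1/J^2)$ gives the two claimed expansions for $A$.

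For $B(\gamma) = \int\big(\tfrac{1+(E-1)x}{1+(J-1)x}\big)^2\d\mu(x)$, the atom at $0$ contributes the constant $c_0$ and the atom at $1$ contributes exactly $c_1 (E/J)^2$. For the $\mu_{\mathrm{ac}}$-part I factor $\tfrac{J}{E}\cdot\tfrac{1+(E-1)x}{1+(J-1)x} = \tfrac{x+(1-x)/E}{x+(1-x)/J} =: g_J(x)$, which satisfies $0<g_J\le 1$ on $[0,1]$ (because $E\ge J$) and $g_J(x)\to 1$ for every $x>0$, so dominated convergence yields $\tfrac{J^2}{E^2}\int g_J^2\,\d\mu_{\mathrm{ac}} \to \mu_{\mathrm{ac}}([r_-,r_+])$ and hence $B(\gamma) = c_0 + \tfrac{E^2}{J^2}\big(\tfrac{\min(\alpha,\alpha^*)}{\alpha}+o(1)\big)$. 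To sharpen the $o(1)$ I would bound $1-g_J(x)^2 \le 2(1-g_J(x)) \le \tfrac{2(1-x)/J}{x+(1-x)/J}$: for $\alpha\neq\alpha^*$ this is $\le 2/(Jr_-)$ uniformly, giving error $O(1/J)$; for $\alpha=\alpha^*$ I split at $x=1/J$, where the $\mu_{\mathrm{ac}}$-mass on $[0,1/J]$ is $O(1/\sqrt J)$ and on $[1/J,r_+]$ the bound $1-g_J(x)\le 1/(Jx)$ integrated against the $x^{-1/2}$ density is again $O(1/\sqrt J)$. Since the atomic contributions are exact, this gives exactly the error exponents $O(1/J)$ and $O(1/J^{1/2})$ in the statement, and $c_0 = (1-\alpha^*/\alpha)^+$.

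The atomic bookkeeping and the mass identities are routine. The step I expect to require the most care is the $\alpha=\alpha^*$ case: there $r_-=0$, so the naive estimate that $\tfrac1{1+(J-1)x}$ is negligible on the support of $\mu_{\mathrm{ac}}$ fails, and one must use the precise $x^{-1/2}$ behaviour of the density near $0$, which is both what produces the anomalous $\sqrt J$-scale correction and what forces the weaker error exponents $O(1/J^{3/2})$ and $O(1/J^{1/2})$. One should also verify that the degenerate configurations $\alpha=1$ or $\alpha^*=1$ (where $\mu_{\mathrm{ac}}$ collapses to a single atom) are covered by the same estimates.
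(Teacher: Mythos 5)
Your proof is correct, and it follows the same overall strategy as the paper's: split $\mu$ into its two atoms and its absolutely continuous part (the atoms are computed exactly, and for $B$ their contribution is precisely $c_0 + c_1(E/J)^2$), then estimate the $\mu_{\mathrm{ac}}$-integral, with the key dichotomy being $r_->0$ (support bounded away from $0$, giving $O(1/J)$ errors) versus $r_-=0$ (an $x^{-1/2}$ singularity of the density at the origin forces the $J^{-1/2}$ scale). The technical execution differs in a way worth noting. For $A$, you use $\frac{x}{1+(J-1)x}=\frac{1}{J-1}\bigl(1-\frac{1}{1+(J-1)x}\bigr)$ and a rescaling $x=u/(J-1)$ together with $\int_0^\infty\frac{\d u}{(1+u)\sqrt u}<\infty$, whereas the paper uses the equivalent identity $\frac1J-\frac{x}{1+(J-1)x}=\frac{1-x}{J^2(x+(1-x)/J)}$ and then an explicit substitution $x=r_+t^2/(1+t^2)$ plus partial fractions to compute the $\alpha=\alpha^*$ remainder exactly. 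For $B$, your factorization $\frac{1+(E-1)x}{1+(J-1)x}=\frac{E}{J}g_J(x)$ with $0<g_J\le 1$ (you correctly observe $J\le E$ follows from the definitions of $F,G$), followed by dominated convergence for the leading term and the pointwise bound $1-g_J^2\le 2(1-g_J)\le\frac{2(1-x)/J}{x+(1-x)/J}$ with a split at $x=1/J$, is a cleaner and more robust way to get the stated $O$-bounds than the paper's expansion into the integrals $I_1,I_2$ and the exact computation $I_1\sim\frac{r_+^2}{2\alpha}J^{1/2}$; the paper's route gives sharper information (an honest asymptotic for the error, not just an upper bound), but your argument establishes exactly what the lemma asserts with less computation. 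Your closing remark about the degenerate cases $\alpha=1$ or $\alpha^*=1$ (where $r_-=r_+$ and $\mu_{\mathrm{ac}}$ vanishes, so only the atomic bookkeeping remains) is correct and is a point the paper does not address explicitly.
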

This lemma is proven in \cref{App:proofoverlap4}. Using this result, we are now ready to establish $\underset{\gamma \to \infty}{\lim} \chi(\gamma)$ as well as the convergence rate:

\begin{lemma} \label{lemma:overlap5}
    The asymptotic behaviour of $\chi(\gamma)$ is given by:
    \begin{equation*}
        \chi(\gamma) \underset{\gamma \to \infty}{=} \left\{ \begin{array}{cc}
        \vspace{0.1 cm}
           \sqrt{\dfrac{\alpha}{\alpha^*}} + O \big( e^{-4 \gamma / \alpha^*} \big)  & \mathrm{for} \ \alpha < \alpha^*  \\
           \vspace{0.1 cm}
           1 + O \big( \sqrt{\gamma} e^{-2 \gamma / \alpha^*} \big)  & \mathrm{for} \ \alpha = \alpha^* \\
           1 + O \big( \gamma^{-2} \big) & \mathrm{for} \ \alpha > \alpha^*.
        \end{array} \right.
    \end{equation*}
\end{lemma}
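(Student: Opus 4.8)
The plan is to treat \cref{lemma:overlap5} as a bookkeeping consequence of the three preceding lemmas: substitute the exact formula
\begin{equation*}
\chi(\gamma) = \sqrt{\frac{\alpha}{\alpha^*}}\, e^{4\gamma/\alpha^*}\, \frac{A(\gamma)}{\sqrt{B(\gamma)}}
\end{equation*}
from \cref{lemma:overlap2} into the expansions of $A(\gamma)$ and $B(\gamma)$ in powers of $J(\gamma)^{-1}$ from \cref{lemma:overlap4}, then replace the remaining $J(\gamma)$ by its asymptotics from \cref{lemma:overlap3}. No new idea is required, but the cancellations differ enough across the three regimes $\alpha<\alpha^*$, $\alpha=\alpha^*$, $\alpha>\alpha^*$ that each must be handled separately, and one must be careful that the $O$-errors propagate correctly through the square root and the quotient (using $\sqrt{1+O(\epsilon)} = 1+O(\epsilon)$ and $(1+O(\epsilon))/(1+O(\delta)) = 1 + O(\epsilon) + O(\delta)$ as $\epsilon,\delta\to 0$, valid in every case).

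First, for $\alpha < \alpha^*$: here $(1-\alpha^*/\alpha)^+ = 0$ and $\min(\alpha,\alpha^*)/\alpha = 1$, so \cref{lemma:overlap4} gives $A(\gamma) = J(\gamma)^{-1}\big(1+O(J(\gamma)^{-1})\big)$ and $B(\gamma) = e^{8\gamma/\alpha^*}J(\gamma)^{-2}\big(1+O(J(\gamma)^{-1})\big)$. Hence $A(\gamma)/\sqrt{B(\gamma)} = e^{-4\gamma/\alpha^*}\big(1+O(J(\gamma)^{-1})\big)$ and $\chi(\gamma) = \sqrt{\alpha/\alpha^*}\big(1+O(J(\gamma)^{-1})\big)$; since $J(\gamma)\sim\kappa(\alpha,\alpha^*)e^{4\gamma/\alpha^*}$ in this regime, $O(J(\gamma)^{-1}) = O(e^{-4\gamma/\alpha^*})$, giving the stated rate. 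For $\alpha = \alpha^*$: \cref{lemma:overlap4} gives $A(\gamma) = J(\gamma)^{-1}\big(1+O(J(\gamma)^{-1/2})\big)$ and $B(\gamma) = e^{8\gamma/\alpha^*}J(\gamma)^{-2}\big(1+O(J(\gamma)^{-1/2})\big)$, so $\chi(\gamma) = 1 + O(J(\gamma)^{-1/2})$; with $J(\gamma)\sim\kappa(\alpha,\alpha^*)\gamma^{-1}e^{4\gamma/\alpha^*}$ we get $J(\gamma)^{-1/2} = O(\sqrt{\gamma}\,e^{-2\gamma/\alpha^*})$.

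The delicate case is $\alpha > \alpha^*$, where $\min(\alpha,\alpha^*)/\alpha = \alpha^*/\alpha$ and the constant $(1-\alpha^*/\alpha)$ in $B(\gamma)$ is nonzero. The point to notice is that, because $J(\gamma)\sim\kappa(\alpha,\alpha^*)\gamma^{-1}e^{4\gamma/\alpha^*}$ grows only like $\gamma^{-1}e^{4\gamma/\alpha^*}$ rather than $e^{4\gamma/\alpha^*}$, the second term of $B(\gamma)$ satisfies $\tfrac{\alpha^*}{\alpha}e^{8\gamma/\alpha^*}J(\gamma)^{-2}\sim\tfrac{\alpha^*}{\alpha}\kappa(\alpha,\alpha^*)^{-2}\gamma^{2}\to\infty$, so it dominates the constant term; factoring it out yields $B(\gamma) = \tfrac{\alpha^*}{\alpha}e^{8\gamma/\alpha^*}J(\gamma)^{-2}\big(1+O(\gamma^{-2})\big)$, the $O(J(\gamma)^{-1})$ correction being exponentially smaller than $O(\gamma^{-2})$. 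Combining with $A(\gamma) = \tfrac{\alpha^*}{\alpha}J(\gamma)^{-1}\big(1+O(J(\gamma)^{-1})\big)$ gives $A(\gamma)/\sqrt{B(\gamma)} = \sqrt{\alpha^*/\alpha}\, e^{-4\gamma/\alpha^*}\big(1+O(\gamma^{-2})\big)$, so that $\chi(\gamma) = 1 + O(\gamma^{-2})$.

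The main obstacle is essentially contained in this last case: one must recognize that the leading-order contribution to $B(\gamma)$ in the regime $\alpha>\alpha^*$ is not the constant $(1-\alpha^*/\alpha)$ but the polynomially growing term, and that the residual relative error is precisely of order $\gamma^{-2}$ (and not larger), which forces a careful comparison of the rates of the two pieces of $B$ via \cref{lemma:overlap3}. Everything else — verifying that $\sqrt{\alpha/\alpha^*}\cdot\sqrt{\alpha^*/\alpha} = 1$, that the exponential factors $e^{4\gamma/\alpha^*}$ cancel, and that the subleading $O(J(\gamma)^{-k})$ terms are dominated by the announced error in each regime — is routine once the three cases are kept apart.
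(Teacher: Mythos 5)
Your proof is correct and follows essentially the same route as the paper's: substitute the expression for $\chi(\gamma)$ from \cref{lemma:overlap2} into the expansions of $A(\gamma)$, $B(\gamma)$ from \cref{lemma:overlap4}, convert $J(\gamma)^{-1}$ to $\gamma$-asymptotics via \cref{lemma:overlap3}, and note in the case $\alpha > \alpha^*$ that the polynomially growing piece of $B(\gamma)$ dominates the constant $(1-\alpha^*/\alpha)$ with a relative error of order $\gamma^{-2}$ while the $O(J(\gamma)^{-1})$ corrections are exponentially smaller. The paper carries out exactly the same bookkeeping, just written slightly more compactly.
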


\begin{proof}
    The proof is a simple consequence of the two previous lemmas and equation \eqref{eq:C4limitxi} giving the expression of $\chi(\gamma)$. Starting with the case $\alpha < \alpha^*$, we obtain:
    \begin{equation*}
        \chi(\gamma) \underset{\gamma \to \infty}{=} \sqrt{\frac{\alpha}{\alpha^*}} \frac{1 + O \big(J(\gamma)^{-1} \big)}{\sqrt{1 + O \big( J(\gamma)^{-1}  \big)}} = \sqrt{\frac{\alpha}{\alpha^*}} + O \big( J(\gamma)^{-1} \big),
    \end{equation*}
    and the result from \cref{lemma:overlap3}. Likewise, for $\alpha = \alpha^*$:
        \begin{equation*}
        \chi(\gamma) \underset{\gamma \to \infty}{=} \frac{1 + O \big(J(\gamma)^{-1/2} \big)}{\sqrt{1 + O \big( J(\gamma)^{-1/2}  \big)}} = 1 + O \big( J(\gamma)^{-1/2} \big).
    \end{equation*}
    Finally, if $\alpha > \alpha^*$:
    \begin{equation*}
        \chi(\gamma) \underset{\gamma \to \infty}{=} \sqrt{\frac{\alpha^*}{\alpha}} \frac{1 + O \big( J(\gamma)^{-1} \big)}{ \sqrt{\left(1 - \dfrac{\alpha^*}{\alpha} \right) \dfrac{J(\gamma)^2}{e^{8 \gamma / \alpha^*}} + \dfrac{\alpha^*}{\alpha} + O \big( J(\gamma)^{-1}\big)}} =  \frac{1 + O \big(J(\gamma)^{-1} \big)}{\sqrt{1 + O \big( \gamma^{-2}  \big)}} = 1 + O \big( \gamma^{-2} \big),
    \end{equation*}
    which concludes the proof. 
\end{proof}

 \begin{figure} [ht]
    \begin{minipage}{0.58 \textwidth}
        \includegraphics[width=\linewidth]{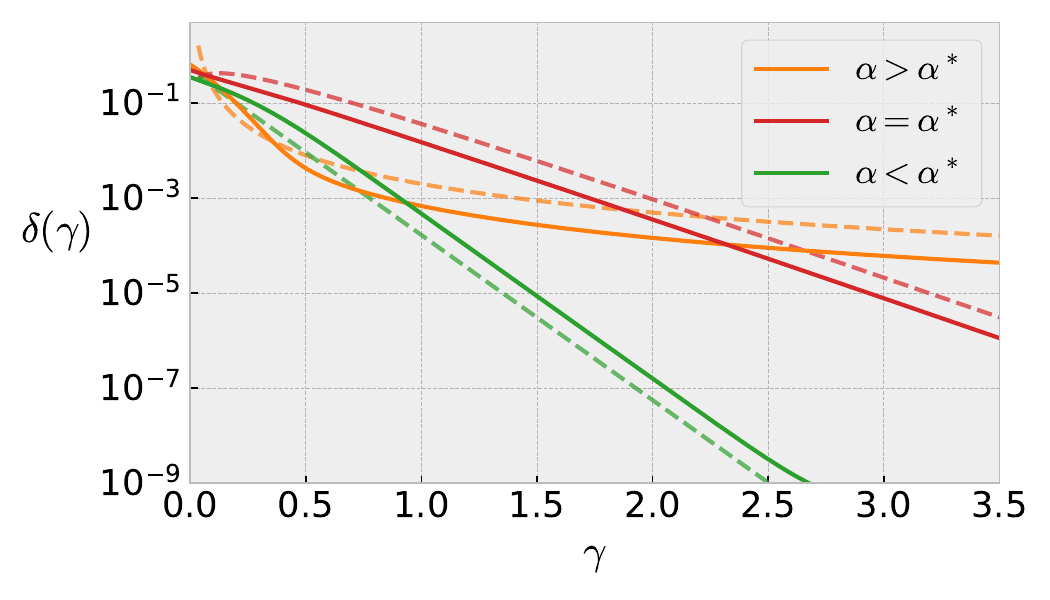}
    \end{minipage}
    \hfill
    \raisebox{0.2ex}{
    \begin{minipage}{0.38 \textwidth}
        \centering
    \caption{Evolution of $\delta(\gamma) = \chi_\infty - \chi(\gamma)$ (where $\chi_\infty = \lim_{\gamma \to \infty} \chi(\gamma)$) as a function of $\gamma$ for different values of $\alpha, \alpha^*$. Dashed lines: convergence rates obtained in \cref{lemma:overlap5}. Simulated using equation \eqref{eq:C4limitxi}, with standard discretization for the integral computation ($\alpha \neq \alpha^*$) and exact solution ($\alpha = \alpha^*$). 
     }
         \label{fig:Overlap}
    \end{minipage} }
 \end{figure}

\cref{fig:Overlap} compares a simulated version of equation \eqref{eq:C4limitxi} with the bounds obtained in the previous lemma. As expected, those bounds are tight since they were obtained using an exact asymptotic development.

This last result is to be compared with the convergence rates obtained in finite dimension (see \cref{PropCVRate}). In the overparameterized case $m \geq m^*$, we obtained $Z(t) \xrightarrow[t \to \infty]{} Z^*$. Regarding the overlap:
\begin{equation*}
\begin{aligned}
    \frac{\tr(Z(t)Z^*)}{\|Z(t)\|_F \|Z^*\|_F} &= \frac{1}{2} \left( \frac{\|Z(t)\|_F}{\|Z^*\|_F} +  \frac{\|Z^*\|_F}{\|Z(t)\|_F} - \frac{\|Z(t) - Z^*\|_F^2}{\|Z^*\|_F \|Z(t)\|_F}\right) \\
    &= 1 + \Theta \big( \|Z(t)-Z^*\|_F \big)
\end{aligned}
\end{equation*}
From the rates obtained in \cref{PropCVRate}, we obtain that the overlap is $1 + O(t^{-1})$ whenever $m > m^*$. In the high-dimensional limit, the convergence is faster for $\alpha > \alpha^*$ from \cref{lemma:overlap4}. As for the case $m=m^*$, the convergence at finite dimension was exponential and the speed was proportional to the smallest non-zero eigenvalue of $Z^*$. In our orthonormal setup, this smallest eigenvalue is given by $(m^*)^{-1}$. This leads to:
\begin{equation*}
    \frac{\tr(Z(t)Z^*)}{\|Z(t)\|_F \|Z^*\|_F} = 1 + O \left( e^{-2t / m^*} \right).
\end{equation*}
In the high-dimensional limit where $t = \gamma d$ and $m^*/ d \xrightarrow[]{} \alpha^*$, we obtain a rate $O(e^{-2\gamma / \alpha^*})$ which is the one obtained in \cref{lemma:overlap4} up to a correction proportional to $\sqrt{\gamma}$.

\vspace{1 cm}

\section{Proofs of the Technical Lemmas}
\subsection{Proof of Lemma \ref{lemma:eigenvalues}} \label{App:proofeigenvalues}

For $k = 1, \dots, d$, we set $z_k = W^{0T} v_k^* \in \R^m$, so that:
\begin{equation*}
    W^{0T} E^*(t) W^0 = \sum_{k=1}^d \gamma_k(t) z_kz_k^T,
\end{equation*}
with:
\begin{equation} \label{eq:D1gamma}
    \gamma_k(t) = \int_0^t e^{-2\psi(s)} e^{2s \tr(Z^*)} e^{4 \mu_k s} \d s \underset{t \to \infty}{\sim} \frac{1}{4 \mu_k} e^{-2 \psi(t)} e^{2t \tr(Z^*)} e^{4 \mu_k t},
\end{equation}
whenever $\mu_k > 0$. This is a consequence of \cref{lemma:asymptoticsIntegral} along with the fact that $\dot{\psi}(t) = \tr ( W(t)W(t)^T) \longrightarrow \tr(Z^*)$ as $t \to \infty$. When $\mu_k = 0$, then $\gamma_k(t) = o \big( e^{\beta t} \big)$ for all $\beta > 0$. Thus, is $\mu_k > \mu_j$, we have that $\gamma_k(t) \gamma_j(t)^{-1} \longrightarrow \infty$ as $t \to \infty$. Thus, we have some $t_0 > 0$ such that $\gamma_1(t) \geq \dots \geq \gamma_d(t) \geq 0$ as soon as $t \geq t_0$. Using Courant-Fischer formula (\cref{lemma:courantfischer}), we have for $j\in \llbracket 1, \min(m,d) \rrbracket$:
\begin{equation} \label{eq:C2courant}
    \lambda_j(t) =\max_{\substack{V \subset \R^m \\ \mathrm{dim} V = j}} \min_{\substack{x \in V \\\|x\|=1}} \left( \sum_{k=1}^d \gamma_k(t)  (x \. z_k)^2 \right) =  \min_{\substack{V \subset \R^m \\ \mathrm{dim} V = m-j+1}} \max_{\substack{x \in V \\\|x\|=1}} \left( \sum_{k=1}^d \gamma_k(t)  (x \. z_k)^2 \right).
\end{equation}
We start by using the second equality and choose $V_1 = \mathrm{Span}(z_1, \dots, z_{j-1})^\perp$ which is of dimension $m-j+1$ by assumption. Then, for $t \geq t_0$:
\begin{equation*}
    \lambda_j(t) \leq \max_{\substack{x \in V_1 \\\|x\|=1}} \left( \sum_{k=j}^d \gamma_k(t)  (x \. z_k)^2 \right) \leq \gamma_j(t) \underbrace{\max_{\substack{x \in V_1 \\ \|x\|=1}} \sum_{k=j}^d (x \. z_k)^2}_{\textstyle \equiv C_j}.
\end{equation*}
We now use the first equality of equation \eqref{eq:C2courant}. We choose $V_2 = \mathrm{Span}(z_1, \dots, z_j)$ which is of dimension $j$ by assumption. Then:
\begin{equation*}
    \lambda_j(t) \geq \min_{\substack{x \in V_2 \\\|x\|=1}} \left( \sum_{k=1}^d \gamma_k(t)  (x \. z_k)^2 \right) \geq \min_{\substack{x \in V_2 \\\|x\|=1}} \left( \sum_{k=1}^j \gamma_k(t)  (x \. z_k)^2 \right) \geq \gamma_j(t) \underbrace{\min_{\substack{x \in V_2 \\\|x\|=1}} \left( \sum_{k=1}^j  (x \. z_k)^2 \right)}_{\textstyle \equiv D_j},
\end{equation*}
for $t \geq t_0$. By definition of $V_2$, $D_j$ cannot be zero. Equation \eqref{eq:D1gamma} allows to conclude the proof. 

\vspace{0.5 cm}
\subsection{Proof of Lemma \ref{lemma:phi2}} \label{App:proofLemma1}

We divide the proofs into two parts. We first prove that the family $(\phi_d)_{d \in \mathbb{N}}$ is uniformly bounded, then we show it is equicontinuous.

\subsubsection{Uniform bound}

To prove the first point, we start by determining the implicit equation solved by $\phi_d$. Using equation \eqref{eq:C3selfpsi} making the substitution $s \mapsto s/d$:
    \begin{equation*}
    \begin{aligned}
        \psi_d(t) &= \frac{1}{4} \tr \log\left( I_{m_d} + \frac{4d}{m_d} U_d^{0T} \int_0^{t/d} e^{-2 \phi_d(s)} \exp \left( \frac{4sd}{m_d^*} U_d^* U_d^{*T} \right) \d s \, U_d^0 \right).
    \end{aligned}
    \end{equation*}
Therefore, introducing $\gamma = t/d$, $\lambda_d = d/m_d$ and $\lambda_d^* = d/m_d^*$:
\begin{equation*}
    \phi_d(\gamma) + \gamma d = \frac{1}{4} \tr \log \left( I_{m_d} + 4 \lambda_d \int_0^\gamma e^{-2 \phi_d(s)} \left( I_{m_d} + \big( e^{4 \lambda_d^* s} - 1 \big) U_d^{0T} U_d^* U_d^{*T} U_d^0 \right) \d s \right),
\end{equation*}
where we used that $\phi_d(\gamma) = \psi_d(\gamma d) - \gamma d$ (in our setup $\tr(Z_d^*) = 1$). This is precisely equation \eqref{eq:C3selfphi}. We set $Y_d = U_d^{0T} U_d^* U_d^{*T} U_d^0$ and:
\begin{equation*}
    F_d(\gamma) = 1 + 4 \lambda_d \int_0^\gamma e^{-2 \phi_d(s)} \d s \hspace{1 cm} G_d(\gamma) = 1 + 4 \lambda_d \int_0^\gamma e^{-2 \phi_d(s)} e^{4 \lambda_d^* s} \d s,
\end{equation*}
so that:
\begin{equation*}
    \phi_d(\gamma) + \gamma d = \frac{1}{4} \tr \log \Big( F_d(\gamma)(1-Y_d) + G_d(\gamma)Y_d \Big).
\end{equation*}
Deriving with respect to $\gamma$:
\begin{equation} \label{eq:C3dotphi}
    \dot{\phi}_d(\gamma) + d = \frac{1}{4} \tr \big[ U_d(\gamma, Y_d) \big],
\end{equation}
with:
\begin{equation*}
    U_d(\gamma, x) = \frac{\dot{F}_d(\gamma)(1-x) + \dot{G}_d(\gamma)x}{F_d(\gamma)(1-x) + G_d(\gamma)x}.
\end{equation*}
Using that $\dot{G}_d(\gamma) = e^{4 \lambda_d^* \gamma} \dot{F}_d(\gamma)$ and integrating by parts:
\begin{equation*}
    G_d(\gamma) = e^{4 \lambda_d^* \gamma} F_d(\gamma) - 4 \lambda_d^* \int_0^\gamma e^{4 \lambda_d^* s} F_d(s) \d s \leq e^{4 \lambda_d^* \gamma} F_d(\gamma).
\end{equation*}
Thus:
\begin{equation*}
    \dot{F}_d(\gamma) G_d(\gamma) - F_d(\gamma) \dot{G}_d(\gamma) = 4 \lambda_d e^{-2 \phi_d(\gamma)} \big( G_d(\gamma) - F_d(\gamma) e^{4 \lambda_d^* \gamma} \big) \leq 0.
\end{equation*}
As a consequence, for every $\gamma \geq 0$, the map $x \mapsto U_d(\gamma, x)$ is non-decreasing. Using equation \eqref{eq:C3dotphi} and the fact that the eigenvalues of $Y_d$ are contained in $[0,1]$:
\begin{equation*}
    \frac{m_d}{4}\frac{\dot{F}_d(\gamma)}{F_d(\gamma)} \leq \dot{\phi}_d(\gamma) + d \leq  \frac{m_d}{4} \frac{\dot{G}_d(\gamma)}{G_d(\gamma)}.
\end{equation*}
Using the bound on $\dot{\phi}_d$ obtained in \cref{lemma:phi1}, we obtain:
\begin{equation*}
    \frac{\dot{F}_d(\gamma)}{F_d(\gamma)} \leq 4 \kappa \frac{\sqrt{d}}{m_d} + 4 \lambda_d  \hspace{1 cm } \frac{\dot{G}_d(\gamma)}{G_d(\gamma)} \geq -4 \kappa \frac{\sqrt{d}}{m_d} + 4 \lambda_d.
\end{equation*}
This can be integrated to obtain a bound on $\phi_d$:
\begin{equation*}
\left(1 -\frac{\kappa}{\sqrt{d}} \right) e^{- 4 \lambda_d^* \gamma} \exp \left(  4\lambda_d \left(1 - \frac{\kappa}{\sqrt{d}} \right) \gamma \right) \leq  e^{-2 \phi_d(\gamma)} \leq \left(1 + \frac{\kappa}{\sqrt{d}} \right) \exp \left( 4\lambda_d \left(1 + \frac{\kappa}{\sqrt{d}} \right) \gamma \right).
    \end{equation*}
Since $\lambda_d$ and $\lambda_d^*$ converge as $d \to \infty$, this proves the first point. 

\subsubsection{Equicontinuity}

We now show the equicontinuity of the family $(\phi_d)_{d \in \mathbb{N}}$ on $[0,T]$. Back to equation \eqref{eq:C3dotphi}, using the definition of $F_d(\gamma)$ and $G_d(\gamma)$ :
\begin{equation*}
    U_d(\gamma, x) = 4 \lambda_d e^{-2 \phi_d(\gamma)} \underbrace{ \frac{1-x + xe^{4 \lambda_d^* \gamma}}{1 + 4 \lambda_d \int_0^\gamma e^{-2 \phi_d(s)} (1-x + xe^{4 \lambda_d^* s}) \d s }}_{ \textstyle \equiv v_d(\gamma, x)}.
\end{equation*}
Thus, using equation \eqref{eq:C3dotphi} and rearranging:
\begin{equation*}
    \phi_d(\gamma) = - \frac{1}{2} \log \biggl( 1 + \underbrace{\frac{\dot{\phi}_d(\gamma)}{d}}_{\textstyle \equiv \epsilon_d(\gamma)} \biggr) + \frac{1}{2} \log\biggl( \underbrace{\frac{1}{m_d} \tr \big[ v_d(\gamma, Y_d) \big] }_{\textstyle \equiv T_d(\gamma)} \biggr).
\end{equation*}
Now, it can be shown that:
\begin{equation*}
    \left| \dot{T}_d(\gamma) \right| \leq \sup_{x \in [0,1]} \left| \frac{\partial v_d}{\partial \gamma} (\gamma, x) \right| \leq  4 \lambda_d^* e^{4 \lambda_d^* T} + 4 \lambda_d e^{-2 \phi_d(\gamma)} e^{8 \lambda_d^* T}. 
\end{equation*}
Since $\phi_d$ is uniformly bounded in $d$ and on $[0,T]$ from the previous step, then this quantity is uniformly bounded in $d$ and on $[0,T]$, say by $L > 0$. We let $\gamma, \gamma' \in [0,T]$ and look at:
\begin{equation} \label{eq:C3boundequi}
    \left| \phi_d(\gamma) - \phi_d(\gamma') \right| \leq \frac{1}{2} \left| \log \left( 1 + \frac{\epsilon_d(\gamma') - \epsilon_d(\gamma)}{1 + \epsilon_d(\gamma)} \right) \right| + \frac{1}{2} \left| \log \left( 1 + \frac{T_d(\gamma') - T_d(\gamma)}{T_d(\gamma)} \right) \right|.
\end{equation}
Moreover:
\begin{equation*}
    T_d(\gamma) \geq \inf_{x \in [0,1]} v_d(\gamma, x) \geq \left( 1 + \frac{\lambda_d}{\lambda_d^*} e^{-2c} e^{4 \lambda_d T} \right)^{-1},
\end{equation*}
where $c \in \R$ is such that $\phi_d(\gamma) \geq c$ for all $\gamma \in [0,T]$ and $d \in \mathbb{N}$ (such a constant exists from the previous step). This proves that $T_d(\gamma)$ is lower bounded by some constant $\rho > 0$, independent from $\gamma \in [0,T]$ and $d$. As a consequence:
\begin{equation*}
    \left| \frac{T_d(\gamma) - T_d(\gamma')}{T_d(\gamma)} \right| \leq \frac{L}{\rho} \big| \gamma - \gamma' \big|,
\end{equation*}
for all $\gamma, \gamma' \in [0,T]$ and $d \in \mathbb{N}$. Take $\eta \leq \rho / (2L)$ and suppose that $|\gamma- \gamma'| \leq \eta$. Then using the inequality $\big| \log(1+x) \big| \leq 2 |x|$ for $x \in [-1/2, 1/2]$, the second term of equation \eqref{eq:C3boundequi}:
\begin{equation*}
\left| \log \left( 1 + \frac{T_d(\gamma') - T_d(\gamma)}{T_d(\gamma)} \right) \right| \leq \frac{2L}{\rho}\eta .
\end{equation*}
For the first term, we use the bound obtained in \cref{lemma:phi1}, which implies that:
\begin{equation*}
    \sup_{\gamma \in [0,T]} \big| \epsilon_d(\gamma) \big| \leq \frac{\kappa}{\sqrt{d}}.
\end{equation*}
Thus, for $d \geq \lfloor \kappa^2 \max(25, (1+2/\eta)^2 ) \rfloor \equiv d_0$, we obtain:
\begin{equation*}
     \left| \log \left( 1 + \frac{\epsilon_d(\gamma') - \epsilon_d(\gamma)}{1 + \epsilon_d(\gamma)} \right) \right| \leq 2 \eta,
\end{equation*}
and finally:
\begin{equation*}
    \sup_{d \geq d_0} \sup_{\substack{\gamma, \gamma' \in [0,T] \\ |\gamma-  \gamma'| \leq \eta}} \big| \phi_d(\gamma) - \phi_d(\gamma') \big| \leq \eta \left( 1 + \frac{L}{\rho} \right).
\end{equation*}
Obviously this quantity goes to zero as $\eta \to 0$. For $d = 0, \dots, d_0-1$, each $\phi_d$ is continuous on $[0,T]$, thus uniformly continuous, and:
\begin{equation*}
    \sup_{d \in \mathbb{N}} \sup_{\substack{\gamma, \gamma' \in [0,T] \\ |\gamma-  \gamma'| \leq \eta}} \big| \phi_d(\gamma) - \phi_d(\gamma') \big| \leq \max \left[ \max_{d \in \llbracket 0, d_0-1 \rrbracket} \sup_{\substack{\gamma, \gamma' \in [0,T] \\ |\gamma-  \gamma'| \leq \eta}} \big| \phi_d(\gamma) - \phi_d(\gamma') \big|, \ \eta \left( 1 + \frac{L}{\rho} \right) \right] \xrightarrow[\eta \to 0]{} 0,
\end{equation*}
which is the claim of \cref{lemma:phi2}.

\subsection{Proof of Lemma \ref{lemma:phi3}} \label{App:proofphi3}

We set $\sigma_d = (d/m_d, d/m_d^*)$ which converges towards $\sigma = (\alpha^{-1}, \alpha^{*-1})$ as $d \to \infty$. Observe that:
\begin{equation*}
    \H_d(\xi)(\gamma) = \frac{1}{m_d} \tr \big( S(\sigma_d, \xi, \gamma, Y_d) \big) \hspace{1 cm} \H(\xi)(\gamma) = \int S(\sigma, \xi, \gamma, x) \d \mu(x),
\end{equation*}
with:
\begin{equation*}
    S(\sigma, \xi, \gamma, x) = \frac{1}{4 \sigma_1} \log \left( 1 + 4 \sigma_1 \int_0^\gamma e^{-2 \xi(s)} \d s + 4x \sigma_1 \int_0^\gamma e^{-2 \xi(s)} \big( e^{4 \sigma_2 s} -1 \big) \d s \right),
\end{equation*}
and $\sigma = (\sigma_1, \sigma_2)$. Now, for $\xi \in B_c$ and $\gamma \in [0,T]$:
\begin{equation} \label{eq:C3bound1}
\begin{aligned}
    \big| \H_d(\xi)(\gamma) - \H(\xi)(\gamma) \big| &\leq \frac{1}{m_d} \tr \, \Big| S(\sigma_d, \xi, \gamma, Y_d) - S(\sigma, \xi, \gamma, Y_d) \Big| \\
    &\hspace{1.5 cm}+ \left| \frac{1}{m_d} \tr S(\sigma, \xi, \gamma, Y_d) - \int S(\sigma, \xi, \gamma, x) \d x \right|.
\end{aligned}
\end{equation}
We start by the first term. We have:
\begin{equation*}
    \frac{1}{m_d} \tr \, \Big| S(\sigma_d, \xi, \gamma, Y_d) - S(\sigma, \xi, \gamma, Y_d) \Big| \leq \sup_{x \in [0,1]} \big| S(\sigma_d, \xi, \gamma, x) - S(\sigma, \xi, \gamma, x) \big|,
\end{equation*}
since the eigenvalues of $Y_d$ are contained in $[0,1]$. We take $\sigma_1, \tilde{\sigma}_1$ and suppose that they are lower bounded by some $\sigma_0 > 0$. Now, for $a, \tilde{a} > 0$:
\begin{equation*}
\begin{aligned}
    \left| \frac{1}{4 \sigma_1} \log \big( 1 + 4\sigma_1 a \big) - \frac{1}{4 \tilde{\sigma}_1} \log \big( 1 + 4\tilde{\sigma}_1 \tilde{a} \big) \right| &\leq \left| \frac{1}{4 \sigma_1} \log \big( 1 + 4\sigma_1 a \big) - \frac{1}{4 \tilde{\sigma}_1} \log \big( 1 + 4\tilde{\sigma}_1 a \big) \right| \\
    &\hspace{1 cm} + \left| \frac{1}{4 \tilde{\sigma}_1} \log \big( 1 + 4\tilde{\sigma}_1 a \big) - \frac{1}{4 \tilde{\sigma}_1} \log \big( 1 + 4\tilde{\sigma}_1 \tilde{a} \big) \right| \\
    &\leq \frac{2a}{\sigma_0} | \sigma_1 - \tilde{\sigma}_1 | + |a - \tilde{a}|.
\end{aligned}
\end{equation*}
We used that the map $u \mapsto \dfrac{1}{4u} \log(1+4ua)$ is Lipschitz with coefficient $\dfrac{2a}{u_0}$ for $u \geq u_0$ and the fact that $u \mapsto \log(1+u)$ is $1-$Lipschitz. Applying this result with $a = \int_0^\gamma e^{-2 \xi(s)} \d s + x \int_0^\gamma e^{-2 \xi(s)} \big( e^{4 \sigma_2 s} - 1 \big) \d s$ (and the same for $\tilde{a}$ with $\tilde{\sigma}_2$):
\begin{equation*}
\begin{aligned}
    \big| S(\sigma, \xi, \gamma, x) - S(\tilde{\sigma}, \xi, \gamma, x) \big| &\leq \frac{2a}{\sigma_0} | \sigma_1 - \tilde{\sigma}_1| + x \int_0^\gamma e^{-2 \xi(s)} \big| e^{4s \sigma_2} - e^{4s \tilde{\sigma}_2} \big| \d s \\
    &\leq \frac{2a}{\sigma_0} | \sigma_1 - \tilde{\sigma}_1| + 4x |\sigma_2 - \tilde{\sigma}_2|  \int_0^\gamma s e^{-2 \xi(s)} e^{4s \sigma_0^*} \d s,
\end{aligned}
\end{equation*}
where we assumed that $\sigma_2, \tilde{\sigma}_2 \leq \sigma_0^*$. Now, using that $\gamma \in [0,T]$, $x \in [0,1]$ and $\xi \in B_c$:
\begin{equation*}
    \big| S(\sigma, \xi, \gamma, x) - S(\tilde{\sigma}, \xi, \gamma, x) \big| \leq \frac{2}{\sigma_0}\left( T e^{-2c} + \frac{e^{-2c}}{4 \sigma_0^*} e^{4 \sigma_0^* T} \right) |\sigma_1 - \tilde{\sigma}_1 | + \frac{Te^{-2c}}{\sigma_0^*} e^{4T \sigma_0^*} |\sigma_2 - \tilde{\sigma}_2 |.
\end{equation*}
Now using this result with $\sigma_d = (d/m_d, d/m_d^*) \xrightarrow[d \to \infty]{} \sigma = (\alpha^{-1}, \alpha^{*-1})$, it is clear that $d/m_d$ is bounded away from zero and $d/m_d^*$ is lower bounded (at least for $d$ large enough), which confirms the existence of $\sigma_0, \sigma_0^*$. Finally:
\begin{equation*}
    \sup_{\xi \in B_c} \sup_{\gamma \in [0,T]} \sup_{x \in [0,1]} \big| S(\sigma_d, \xi, \gamma, x) - S(\sigma, \xi, \gamma, x) \big| \xrightarrow[d \to \infty]{} 0,
\end{equation*}
which proves that the first term of equation \eqref{eq:C3bound1} goes to zero uniformly in $\xi$ and $\gamma$. We look at the second term. Using the bounds, for $\xi \in B_c$ and $\gamma \in [0,T]$:
\begin{equation*}
\begin{aligned}
    1 +4 \sigma_1 \int_0^\gamma e^{-2 \xi(s)} \d s &\leq 1 + 4 \sigma_1 e^{-2c} T \equiv A(\sigma),\\
    4 \sigma_1 \int_0^\gamma e^{-2 \xi(s)} \big( e^{4 \sigma_2 s} - 1 \big) \d s &\leq \frac{\sigma_1}{\sigma_2} e^{-2c} e^{4 \sigma_2 T} \equiv B(\sigma),
\end{aligned}
\end{equation*}
then:
\begin{equation*}
    \sup_{\xi \in B_c} \sup_{\gamma \in [0,T]} \left| \frac{1}{m_d} \tr S(\sigma, \xi, \gamma, Y_d) - \int S(\sigma, \xi, \gamma, x) \d x \right| \leq \frac{1}{4 \sigma_1} \sup_{(a,b) \in K(\sigma)} \left| \frac{1}{m_d} \tr \Big[ l_{a,b}(Y_d) \Big] - \int l_{a,b}(x) \d \mu(x) \right|,
\end{equation*}
with $K(\sigma) = [1, A(\sigma)] \times [0, B(\sigma)]$ and $l_{a,b}(x) = \log(a+bx)$. Since the family $(l_{a,b})_{a,b \in K(\sigma)}$ is compact, we can apply the result of \cref{lemma:largedev} to obtain that the second term also goes to zero with probability one. Therefore, both terms of equation \eqref{eq:C3bound1} converge uniformly to zero, which concludes the proof.

\subsection{Proof of Lemma \ref{lemma:phi5}} \label{App:proofphi5}

From equation \eqref{eq:selfphi}, we have:
\begin{equation} \label{eq:D4eq1}
    4 \gamma = \alpha \log F_\phi(\gamma) + (\alpha + \alpha^*-1)^+ \log J_\phi(\gamma) + \Theta \big(J_\phi(\gamma) - 1 \big),
\end{equation}
with:
\begin{equation*}
    F_\phi(\gamma) = 1 + \frac{4}{\alpha} \int_0^t e^{-2 \phi(s)} \d s \hspace{1 cm} J_\phi(\gamma) = F_\phi(\gamma)^{-1} \left(  1 + \frac{4}{\alpha} \int_0^t e^{-2 \phi(s)} e^{4s / \alpha^*} \d s \right). 
\end{equation*}
From the expression of $\Theta$ in \cref{PropPhi}:
\begin{equation*}
   \Theta(u) =\frac{1}{2\pi} \int_{r_-}^{r_+} \frac{\sqrt{(r_+-x)(x-r_-)}}{x(1-x)} \log(1 + ux) \d x,
\end{equation*}
it is easy to see that $\Theta$ is twice  continuously differentiable on $]-1, \infty[$, and that $\Theta'(u) \geq 0$ for all $u > -1$. We now suppose that $\phi$ is a continuous solution of equation \eqref{eq:D4eq1}. Then, since $J_\phi(\gamma) \geq 1$, we can differentiate with respect to $\gamma$:
\begin{equation*}
    4 = \alpha \frac{\dot{F}_\phi(\gamma)}{F_\phi(\gamma)} + (\alpha + \alpha^*-1)^+ \frac{\dot{J}_\phi(\gamma)}{J_\phi(\gamma)} + \dot{J}_\phi(\gamma) \Theta' \big(J_\phi(\gamma)-1\big).
\end{equation*}
From the relationships:
\begin{equation*}
    \dot{F}_\phi(\gamma) = \frac{4}{\alpha} e^{-2 \phi(\gamma)} \hspace{1 cm} \dot{J}_\phi(\gamma) = \frac{4}{\alpha} e^{-2 \phi(\gamma)} F_\phi(\gamma)^{-1} \big( e^{4 \gamma / \alpha^*} - J_\phi(\gamma) \big),
\end{equation*}
we finally obtain that $(F_\phi(\gamma), J_\phi(\gamma))$ is solution of the differential system:
\begin{equation} \label{eq:D4eq2}
\begin{pmatrix}\dot{F}(\gamma) \\  \dot{J}(\gamma) \end{pmatrix} = \frac{4}{\alpha + \Gamma(J(\gamma)) \big( e^{4 \gamma / \alpha^*} - J(\gamma) \big)} \begin{pmatrix}F(\gamma) \\ e^{4 \gamma / \alpha^*} - J(\gamma) \end{pmatrix},
\end{equation}
with initial condition $F(0) = J(0) = 1$ and:
\begin{equation*}
    \Gamma(J) = \frac{( \alpha + \alpha^* - 1)^+}{J} + \Theta'(J-1).
\end{equation*}
Thus, if we manage to show that the solution of equation \eqref{eq:D4eq2} is unique, this will imply the uniqueness of the continuous solution $\phi$. Reciprocally, suppose that we have a couple $(F(\gamma), J(\gamma))$ solution of equation \eqref{eq:D4eq2} with the initial condition, then it is easily shown that it is also solution of equation \eqref{eq:D4eq1}. Provided that $\dot{F}(\gamma) > 0$ for all $\gamma \geq 0$, we will obtain the existence of a solution:
\begin{equation*}
    \phi(\gamma) = - \frac{1}{2} \log \left( \frac{\alpha \dot{F}(\gamma)}{4} \right).
\end{equation*}
The goal is now to study equation \eqref{eq:D4eq2}. We now set:
\begin{equation*}
    \Omega(F, J, \gamma) =  \frac{4}{\alpha + \Gamma(J) \big( e^{4 \gamma / \alpha^*} - J \big)} \begin{pmatrix}F  \\ e^{4 \gamma / \alpha^*} - J  \end{pmatrix}.
\end{equation*}
$\Omega$ is well defined on the domain $D = \big\{ (F, J, \gamma) \in \R \times ]0, \infty[ \times \R^+ \ \big| \  \alpha + \Gamma(J) \big( e^{4 \gamma / \alpha^*} - J \big) \neq 0 \big\}$, which is an open subset of $\R^2 \times \R^+$. Now, since $\Theta$ is twice continuously differentiable on $]-1, \infty[$, $\Omega$ is continuously differentiable on its domain. Let $u \in D$ and $\mathcal{V} \subset D$ a compact neighbourhood of $u$. Then by continuity:
\begin{equation*}
    \sup_{(F,J, \gamma) \in \mathcal{V}} \big\| \nabla \Omega(F,J,\gamma) \big\| < \infty.
\end{equation*}
Which proves that $\Omega$ is locally Lipschitz continuous on $D$. Thus, from Picard–Lindelöf theorem, the solution $F(\gamma), J(\gamma)$ of the differential system \eqref{eq:D4eq2} with initial condition $F(0) = J(0) = 1$ are well defined and unique on a subset of the form $[0, \eta]$ for $\eta > 0$. We now let $F_M, J_M$ be maximal solutions for this Cauchy-Lipschitz problem. Following the previous reasoning, they are at least defined and unique on $[0, \eta]$. We suppose by contradiction that they are defined up to $[0,T[$ with $T > 0$. From equation \eqref{eq:D4eq2}, we get that $\alpha + \Gamma(J_M(\gamma) ( e^{4 \gamma / \alpha^*} - J_M(\gamma) ) > 0$ for $\gamma \in [0,T[$, since it is positive for $\gamma = 0$ and cannot cancel. Thus, $\dot{F}_M(\gamma)$ remains of the same sign as $F_M(\gamma)$. Since $F_M(0) = 1$, we get that $\dot{F}_M(\gamma) > 0$ and $F_M$ is non-decreasing. Now, from the equation:
\begin{equation*}
    \dot{J}_M(\gamma) = \big( e^{4 \gamma / \alpha^*} - J_M(\gamma) \big) \frac{\dot{F}_M(\gamma)}{F_M(\gamma)},
\end{equation*}
we get:
\begin{equation*}
    J_M(\gamma) = e^{4 \gamma / \alpha^*} - \frac{4}{\alpha^* F_M(\gamma)} \int_0^\gamma F_M(s) e^{4s / \alpha^*} \d s < e^{4 \gamma / \alpha^*},
\end{equation*}
for $\gamma \in [0, T[$. Thus, $\dot{J}_M(\gamma) > 0$ and $J_M$ is non decreasing. Therefore, $\lim_{\gamma \to T^-} J_M(\gamma)$ exists and is bounded by $e^{4 T / \alpha^*}$. Since $J_M(\gamma)$ remains positive on $[0,T[$, this implies that:
\begin{equation*}
    \lim_{\gamma \to T^-} \left[\alpha + \Gamma(J_M(\gamma)) \big(e^{4 \gamma / \alpha^*} - J_M(\gamma))  \right] \geq \alpha.
\end{equation*}
Thus $(F_M(\gamma), J_M(\gamma), \gamma)$ remains inside $D$ as $\gamma \to T^-$. Now, by maximality assumption, $J_M(\gamma)$ or $F_M(\gamma)$ should escape any compact as $\gamma \to T^-$. From the previous observation, this cannot be the case for $J_M(\gamma)$. Thus, we necessarily have $\lim_{\gamma \to T^{-}} F_M(\gamma) = +\infty$ (again this limit exists since $F_M$ is non-decreasing on $[0,T[$). From equation \eqref{eq:D4eq2}:
\begin{equation*}
    F_M(\gamma) = \exp \left( 4 \int_0^\gamma \frac{\d s}{\alpha + \Gamma(J_M(s)) \big( e^{4 s / \alpha^*} - J_M(s) \big)} \right) \leq e^{4 \gamma / \alpha},
\end{equation*}
and a contradiction. Thus necessarily $T = \infty$ and the system \eqref{eq:D4eq2} admits a unique solution defined on all $\R^+$. From the equivalence between equations \eqref{eq:D4eq1} and \eqref{eq:D4eq2}, this concludes the proof.

\subsection{Proof of Lemma \ref{lemma:overlap2}} \label{App:proofoverlap2}

As proven in \cref{PropPhi}, $\phi_d$ uniformly converges on $[0,T]$ towards a function $\phi$. Thus we define:
\begin{equation} \label{eq:C4defFG}
    F(\gamma) = 1 + \frac{4}{\alpha} \int_0^\gamma e^{-2 \phi(s)} \d s \hspace{1 cm } G(\gamma) = 1 + \frac{4}{\alpha} \int_0^\gamma e^{-2 \phi(s)} e^{4s / \alpha^*} \d s \hspace{1 cm} J(\gamma) = \frac{G(\gamma)}{F(\gamma)}.
\end{equation}
It is easily seen that $F_d \xrightarrow[d \to \infty]{} F$ and $G_d \xrightarrow[d \to \infty]{} G$ uniformly on $[0,T]$. Therefore, since $F_d(\gamma), F(\gamma) \geq 1$:
\begin{equation*}
    \big| J_d(\gamma) - J(\gamma) \big| \leq \big|F(\gamma)\big| \big|G_d(\gamma) - G(\gamma)\big| + \big|G(\gamma) \big| \big|F_d(\gamma) - F(\gamma) \big|.
\end{equation*}
This proves that $J_d \xrightarrow[d \to \infty]{} J$ uniformly on $[0,T]$. Now, for the numerator, we set $U(x, J) = \dfrac{x}{1 + (J-1)x}$, so that:
\begin{equation*}
\begin{aligned}
\left| \frac{1}{m_d} \tr \Big( U(Y_d, J_d(\gamma)) \Big) - \int U(x, J(\gamma)) \d \mu(x) \right| &\leq \frac{1}{m_d} \tr \Big| U(Y_d, J_d(\gamma)) - U(Y_d, J(\gamma)) \Big| \\
&\hspace{1 cm}+ \left| \frac{1}{m_d} \tr \Big( U(Y_d, J(\gamma)) \Big) - \int U(x, J(\gamma)) \d \mu(x) \right|.
\end{aligned}
\end{equation*}
Starting with the first term:
\begin{equation*}
     \frac{1}{m_d} \tr \Big| U(Y_d, J_d(\gamma)) - U(Y_d, J(\gamma)) \Big| \leq \sup_{x \in [0,1]} \Big| U(x, J_d(\gamma)) - U(x, J(\gamma)) \Big| \leq \big|J_d(\gamma) - J(\gamma) \big|.
\end{equation*}
Since it is easily shown that $J \mapsto U(x, J)$ is Lipschitz on $[1, \infty[$ with constant $x^2$ (note that we always have $J_d(\gamma), J(\gamma) \in [1, \infty[$). Thus the first term goes uniformly to zero on $[0,T]$. For the second, $J$ is continuous on $[0,T]$, thus it is bounded by some constant $A \geq 1$. Therefore:
\begin{equation*}
    \sup_{\gamma \in [0,T]} \left| \frac{1}{m_d} \tr \Big( U(Y_d, J(\gamma)) \Big) - \int U(x, J(\gamma)) \d \mu(x) \right| \leq \sup_{a \in [1, A]} \left| \frac{1}{m_d} \tr \Big( U(Y_d, a) \Big) - \int U(x, a) \d \mu(x) \right|,
\end{equation*}
which goes to zero following \cref{lemma:largedev}. Therefore, the first convergence of \cref{lemma:overlap2} is proven. The same can be done for the second. The same thing can be done for the denominator. Introducing:
\begin{equation*}
    V(x, J, e) = \left( \frac{1 + (e-1)x}{1 + (J-1)x} \right)^2,
\end{equation*}
as well as $e_d(\gamma) = e^{4 \gamma d / m_d^*}$ and $e(\gamma) = e^{4 \gamma / \alpha^*}$, we have:
\begin{equation} \label{eq:C4boundV}
\begin{aligned}
\Bigg| \frac{1}{m_d} \tr \Big( V(Y_d, J_d(\gamma), e_d(\gamma)) \Big) - &\int V(x, J(\gamma), e(\gamma)) \d \mu(x) \Bigg| \leq \frac{1}{m_d} \tr \Big| V(Y_d, J_d(\gamma), e_d(\gamma)) - V(Y_d, J(\gamma), e(\gamma)) \Big| \\
&\hspace{2cm}+ \left| \frac{1}{m_d} \tr \Big( V(Y_d, J(\gamma), e(\gamma)) \Big) - \int V(x, J(\gamma), e(\gamma)) \d \mu(x) \right|.
\end{aligned}
\end{equation}
Now:
\begin{equation*}
\frac{1}{m_d} \tr \Big| V(Y_d, J_d(\gamma), e_d(\gamma)) - V(Y_d, J(\gamma), e(\gamma)) \Big| \leq \sup_{x \in [0,1]} \Big| V(x, J_d(\gamma), e_d(\gamma)) - V(x, J(\gamma), e(\gamma)) \Big|.
\end{equation*}
Moreover, for $e, \tilde{e} \in [1, E]$ and $J, \tilde{J} \geq 1$:
\begin{equation*}
\begin{aligned}
    \Big| V(x, J, e) - V(x, \tilde{J}, \tilde{e}) \Big| &\leq \Big| V(x, J, e) - V(x, \tilde{J}, e) \Big| + \Big| V(x, \tilde{J}, e) - V(x, \tilde{J}, \tilde{e}) \Big| \\
    &\leq 2E^2 |J-\tilde{J}| + 2E | e-\tilde{e}|.
\end{aligned}
\end{equation*}
Applying this to our case, we have $e_d(\gamma) = e^{4 \gamma d / m_d^*} \leq e^{4 T d / m_d^*}$ stays bounded since $d / m_d^* \xrightarrow[d \to \infty]{} \alpha^{*-1}$. Thus, again, the first term of equation \eqref{eq:C4boundV} goes to zero uniformly on $[0,T]$. For the second, it is clear that since $J$ and $e$ are continuous on $[0,T]$, they both stay in a compact of the form $K = [1, A] \times [1, E]$. Therefore, by \cref{lemma:largedev}:
\begin{equation*}
    \sup_{\gamma \in [0,T]} \left| \frac{1}{m_d} \tr \Big( V(Y_d, J(\gamma), e(\gamma)) \Big) - \int V(x, J(\gamma), e(\gamma)) \d \mu(x) \right| \leq \sup_{(J,e) \in K} \left| \frac{1}{m_d} \tr \Big( V(Y_d, J, e) \Big) - \int V(x, J, e) \d \mu(x) \right|,
\end{equation*}
which goes to zero. As a conclusion, the second convergence of \cref{lemma:overlap2} is proven. Finally, for the overlap, write from equation \eqref{eq:C4overlap}:
\begin{equation*}
    \chi_d(\gamma) = \sqrt{\frac{m_d}{m_d^*}} e^{4 \gamma d /m_d^*} \frac{A_d(\gamma)}{\sqrt{B_d(\gamma)}},
\end{equation*}
where $A_d(\gamma) \xrightarrow[d \to \infty]{} A(\gamma)$ and $B_d(\gamma) \xrightarrow[d \to \infty]{} B(\gamma)$ are the quantity displayed in the lemma, which converge uniformly from the previous reasoning. We write:
\begin{equation*}
\begin{aligned}
    \big| \chi_d(\gamma) - \chi(\gamma) \big| &\leq \frac{A_d(\gamma)}{\sqrt{B_d(\gamma)}} \left| \sqrt{\frac{m_d}{m_d^*}} e^{4 \gamma d / m_d^*} - \sqrt{\frac{\alpha}{\alpha^*}} e^{4 \gamma / \alpha^*} \right| + \sqrt{\frac{\alpha}{\alpha^*}} e^{4 \gamma / \alpha^*} \frac{1}{\sqrt{B_d(\gamma)}} \big| A_d(\gamma) - A(\gamma) \big| \\
    &\hspace{3 cm}+ \sqrt{\frac{\alpha}{\alpha^*}} e^{4 \gamma / \alpha^*} A(\gamma) \left| \frac{1}{\sqrt{B_d(\gamma)}} - \frac{1}{\sqrt{B(\gamma)}} \right|. 
\end{aligned}
\end{equation*}
Since $B_d(\gamma)$ uniformly converges on $[0,T]$ towards $B(\gamma)$ which is strictly positive, thus it is uniformly bounded away from zero for $d$ large enough. Thus, there are constant $k_1, k_2, k_3 > 0$ such that for some $d \geq d_0$:
\begin{equation*}
\begin{aligned}
    \sup_{\gamma \in [0,T]} \big| \chi_d(\gamma) - \chi(\gamma) \big| &\leq k_1 \sup_{\gamma \in [0,T]} \left| \sqrt{\frac{m_d}{m_d^*}} e^{4 \gamma d / m_d^*} - \sqrt{\frac{\alpha}{\alpha^*}} e^{4 \gamma / \alpha^*} \right| + k_2 \sup_{\gamma \in [0,T]} \big| A_d(\gamma) - A(\gamma) \big| \\
    & \hspace{3 cm} + k_3 \sup_{\gamma \in [0,T]}  \left| B_d(\gamma) - B(\gamma) \right|, \\
\end{aligned}
\end{equation*}
since the map $x \mapsto x^{-1/2}$ is Lipschitz on the intervals of the form $[b, \infty[$. As shown before $A_d(\gamma) \longrightarrow A(\gamma)$ and $B_d(\gamma) \longrightarrow B(\gamma)$ uniformly as $d \to \infty$, so that the two last terms go to zero. Since $m_d^* / d$ and $m_d / d$ respectively converge to $\alpha^*, \alpha$, it is easily checked that the first term also goes uniformly to zero. Finally, $\chi_d \xrightarrow[d \to \infty]{} \chi$ uniformly on $[0,T]$, which concludes the proof.

\subsection{Proof of Lemma \ref{lemma:overlap3}} \label{App:proofoverlap3}

From the implicit equation \eqref{eq:selfphi} solved by $\phi$, we obtain:
\begin{equation} \label{eq:C4implicit}
    4 \gamma = \min \big( \alpha, 1-\alpha^* \big) \log F(\gamma) + \big(\alpha+\alpha^*-1\big)^+ \log G(\gamma) + \Theta \big( J(\gamma)-1 \big),
\end{equation}
where $F,G, J$ are defined in equation \eqref{eq:C4defFGJ} (and depend on $\phi$), and:
\begin{equation*}
\Theta(u) = \frac{1}{2 \pi} \int_{r_1}^{r_2} \frac{\sqrt{(r_2-x)(x-r_1)}}{x(1-x)} \log(1 + u x) \d x.
\end{equation*}
Using the definition of $\mu$ (and the fact it has unit total mass), it can be shown that there exists $\Omega(\alpha, \alpha^*)$:
\begin{equation} \label{eq:C4Theta}
    \Theta(u) \underset{u \to \infty}{=} \frac{1 - |\alpha - \alpha^*| - |\alpha+\alpha^*-1|}{2} \log u + \Omega(\alpha, \alpha^*) + O \left( \frac{1}{u} \right).
\end{equation}
The first step is to show that $F(\gamma), G(\gamma)$ and $J(\gamma)$ go to infinity as $\gamma \to \infty$. We recall that:
\begin{equation*} 
    F(\gamma) = 1 + \frac{4}{\alpha} \int_0^\gamma e^{-2 \phi(s)} \d s \hspace{1 cm} G(\gamma) = 1 + \frac{4}{\alpha} \int_0^\gamma e^{-2 \phi(s)}  e^{4s / \alpha^*} \d s \hspace{1 cm} J(\gamma) = \frac{G(\gamma)}{F(\gamma)}.
\end{equation*}
We start by $F$ and $G$. As they are both non-decreasing, either they converge or go to infinity. Obviously they cannot both converge as $\gamma \to \infty$, otherwise the right hand side of equation \eqref{eq:C4implicit} would stay finite as $\gamma \to \infty$. Since $G(\gamma) \geq F(\gamma)$, at least $G(\gamma) \xrightarrow[\gamma \to \infty]{} \infty$. Suppose that $F(\gamma) \xrightarrow[\gamma \to \infty]{} F$, then we would have from equations \eqref{eq:C4implicit} and \eqref{eq:C4Theta}:
\begin{equation*}
    \left( \big(\alpha + \alpha^*-1\big)^+ + \frac{1 - |\alpha - \alpha^*| - |\alpha+\alpha^*-1|}{2} \right) \log G(\gamma) \underset{\gamma \to \infty}{=} 4 \gamma + O(1),
\end{equation*}
which writes:
\begin{equation*}
    G(\gamma) = e^{O(1)} \exp \left( \frac{4 \gamma}{\min(\alpha, \alpha^*)} \right).
\end{equation*}
Therefore, integrating by parts, one has:
\begin{equation} \label{eq:C4relationFG}
    F(\gamma) = G(\gamma) e^{-4 \gamma / \alpha^*} + \frac{4}{\alpha^*}\int_0^\gamma G(s) e^{-4 s / \alpha^*} \d s \geq \frac{4}{\alpha^*}\int_0^\gamma G(s) e^{-4 s / \alpha^*} \d s \xrightarrow[\gamma \to \infty]{} \infty.
\end{equation}
This proves that necessarily $F(\gamma)$ also goes to infinity as $\gamma \to \infty$. We now finally show that also $J(\gamma) \longrightarrow \infty$. Differentiating: 
\begin{equation*}
\dot{J}(\gamma) = \frac{1}{G(\gamma)^2} \big( \dot{G}(\gamma) F(\gamma) - \dot{F}(\gamma) G(\gamma) \big) = \frac{\dot{G}(\gamma)}{G(\gamma)^2} \big(F(\gamma) - G(\gamma) e^{-4 \gamma / \alpha^*} \big) \geq 0,
\end{equation*}
thus $J$ is non-decreasing. Suppose that $J(\gamma)$ converges towards a finite value as $\gamma \to \infty$. Thus, equation \eqref{eq:C4implicit} gives:
\begin{equation*}
4 \gamma = \min \big( \alpha, 1-\alpha^* \big) \log F(\gamma) + \big(\alpha+\alpha^*-1\big)^+ \log G(\gamma) + O(1).
\end{equation*}
Splitting the case $\alpha + \alpha^* \leq 1$ and $\alpha + \alpha^* \geq 1$, we obtain in both case the existence of $C \in \R$ (depending on $\alpha, \alpha^*$) such that:
\begin{equation*}
    4 \gamma \underset{\gamma \to \infty}{=} \alpha \log F(\gamma) + C + o(1).
\end{equation*}
From the following equality, which can be obtained by integrating by parts the relationship $\dot{G}(\gamma) = \dot{F}(\gamma) e^{4 \gamma / \alpha^*}$:
\begin{equation*}
    J(\gamma) = e^{4\gamma / \alpha^*} - \frac{4}{\alpha^* F(\gamma)} \int_0^\gamma F(s) e^{4s / \alpha^*} \d s,
\end{equation*}
we have:
\begin{equation*}
    J(\gamma) \underset{\gamma \to \infty}{\sim} \frac{\alpha^*}{\alpha + \alpha^*} e^{4 \gamma / \alpha^*}.
\end{equation*}
Therefore, we necessarily have $J(\gamma) \longrightarrow \infty$. We now prove the asymptotics on $J$. Using equations \eqref{eq:C4implicit} and $\eqref{eq:C4Theta}$, we obtain as $\gamma \to \infty$:
\begin{equation*}
\begin{aligned}
    4 \gamma &\underset{\gamma \to \infty}{=} \min \big(\alpha, 1- \alpha^* \big) \log F(\gamma) + \big( \alpha + \alpha^* - 1 \big)^+ \log G(\gamma) \\
    & \hspace{2 cm}+ \frac{1 - |\alpha - \alpha^*| - |\alpha+\alpha^*-1|}{2} \log J(\gamma) + \Omega(\alpha, \alpha^*) + o(1) \\
    &= ( \alpha - \alpha^*)^+ \log F(\gamma) + \min ( \alpha, \alpha^* ) \log G(\gamma) + \Omega(\alpha, \alpha^*) + o(1).
\end{aligned}
\end{equation*}
Thus, setting:
\begin{equation*}
    \lambda^* = \frac{1}{\alpha^*} \hspace{1 cm} \lambda_m = \frac{1}{\min(\alpha, \alpha^*)} \hspace{1 cm} \nu = \frac{(\alpha - \alpha^*)^+}{\min(\alpha, \alpha^*)} \hspace{1 cm} \zeta = \exp \left( - \frac{\Omega(\alpha, \alpha^*)}{\min(\alpha, \alpha^*)} \right),
\end{equation*}
we obtain that:
\begin{equation} \label{eq:C4FGnu}
    G(\gamma) = \zeta F(\gamma)^{- \nu} e^{4 \lambda_m \gamma} \omega(\gamma),
\end{equation}
where $\omega(\gamma) \xrightarrow[\gamma \to \infty]{} 1$. Using the relation between $F$ and $G$ in equation \eqref{eq:C4relationFG}:
\begin{equation} \label{eq:C4selfF}
    F(\gamma) = \zeta F(\gamma)^{- \nu} e^{4 ( \lambda_m - \lambda^*) \gamma} \omega(\gamma) + 4 \lambda^* \zeta \int_0^\gamma F(s)^{- \nu} e^{4(\lambda_m - \lambda^*)s} \omega(s) \d s.
\end{equation}
We start by the case where $\alpha \leq \alpha^*$, so that $\nu = 0$. Therefore, using the two previous equations as well as \cref{lemma:asymptoticsIntegral}:
\begin{equation*}
    G(\gamma) \underset{\gamma \to \infty}{\sim} \zeta e^{4 \gamma / \alpha} \hspace{1 cm} F(\gamma) \underset{\gamma \to \infty}{\sim} \left\{ \begin{array}{cc}
    \vspace{0.15 cm}
       \dfrac{4 \zeta \gamma}{\alpha^*}  & \text{for } \alpha = \alpha^*  \\
       \zeta \dfrac{\alpha^*}{\alpha^* - \alpha} \exp \left( 4 \gamma \dfrac{\alpha^* - \alpha}{\alpha \alpha^*} \right)  & \text{for } \alpha < \alpha^*.
    \end{array} \right.
\end{equation*}
Therefore:
\begin{equation*}
    J(\gamma) \underset{\gamma \to \infty}{\sim} \left\{ \begin{array}{cc}
    \vspace{0.15cm}
        \dfrac{\alpha^*}{4 \gamma} e^{4 \gamma / \alpha^*} & \text{for } \alpha = \alpha^*  \\
        \left(1 -\dfrac{\alpha}{\alpha^*} \right) e^{4 \gamma / \alpha^*} & \text{for } \alpha < \alpha^*. 
    \end{array} \right.
\end{equation*}
For $\alpha > \alpha^*$, we have that $\lambda^* = \lambda_m$ and $\nu = \alpha / \alpha^* - 1 > 0$, thus $F(\gamma)^{-\nu} \xrightarrow[\gamma \to \infty]{} 0$. Thus, rewriting equation \eqref{eq:C4selfF}:
\begin{equation*}
    F(\gamma) = \underbrace{\zeta F(\gamma)^{-\nu} \omega(\gamma)}_{\textstyle \equiv \epsilon(\gamma)} + 4 \lambda^* \zeta \int_0^\gamma F(s)^{-\nu} \omega(s) \d s.
\end{equation*}
Since $\epsilon(\gamma) \xrightarrow[\gamma \to \infty]{} 0$, we introduce $\epsilon > 0$ and $\gamma_0$ such that $|\epsilon(\gamma)| \leq \epsilon$ as soon as $\gamma \geq \gamma_0$. With $K(\gamma) = \int_0^\gamma F(s)^{-\nu} \omega(s) \d s$, we obtain by integrating:
\begin{equation*}
    - \epsilon + \left[ \left( \frac{4 \zeta}{\alpha^*} K(\gamma_0) + \epsilon \right)^{\alpha / \alpha^*} + \frac{4 \zeta \alpha}{\alpha^{*2}} \int_0^\gamma \omega(s) \d s \right]^{\alpha^* / \alpha}\leq \frac{4 \zeta}{\alpha^*} K(\gamma) \leq \epsilon + \left[ \left( \frac{4 \zeta}{\alpha^*} K(\gamma_0) -\epsilon \right)^{\alpha / \alpha^*} + \frac{4 \zeta \alpha}{\alpha^{*2}} \int_0^\gamma \omega(s) \d s \right]^{\alpha^* / \alpha}.
\end{equation*}
Therefore:
\begin{equation*}
    F(\gamma) \underset{\gamma \to \infty}{\sim} \left( \frac{4 \zeta \alpha}{\alpha^{*2}} \gamma \right)^{\alpha^* / \alpha}.
\end{equation*}
Using equation \eqref{eq:C4FGnu}, we are able to determine the behaviour of $G$, and the one of $J$:
\begin{equation*}
    G(\gamma) \underset{\gamma \to \infty}{\sim} \zeta \left( \frac{4 \zeta \alpha}{\alpha^{*2}} \gamma \right)^{\alpha^* / \alpha - 1} e^{4 \gamma / \alpha^*} \hspace{1 cm} J(\gamma) \underset{\gamma \to \infty}{\sim} \frac{\alpha^{*2}}{4 \alpha \gamma} e^{4 \gamma / \alpha^*}.
\end{equation*}

\subsection{Proof of Lemma \ref{lemma:overlap4}} \label{App:proofoverlap4}

From the definition of $\mu$ in equation \eqref{eq:manova} and those of $A(\gamma)$, $B(\gamma)$ in \cref{lemma:overlap3}, we have:
\begin{equation} \label{eq:D7AB}
\begin{aligned}
    A(\gamma) &= \frac{1}{\alpha}\big( \alpha + \alpha^* - 1)^+ \frac{1}{J(\gamma)} + \int_{r_-}^{r_+} \frac{x}{1 + (J(\gamma)-1)x} w(x) \d x \\
    B(\gamma) &= \left( 1 - \frac{\alpha^*}{\alpha} \right)^+ + \frac{1}{\alpha}\big( \alpha + \alpha^* - 1)^+ \frac{e^{8 \gamma / \alpha^*}}{J(\gamma)^2} + \int_{r_-}^{r_+} \left( \frac{1 + (e^{4 \gamma / \alpha^*} - 1)x}{1 + (J(\gamma)-1)x} \right)^2 w(x) \d x,
\end{aligned}
\end{equation}
with:
\begin{equation*}
\begin{aligned}
w(x) &= \frac{1}{2 \pi \alpha} \frac{\sqrt{(r_+-x)(x-r_-)}}{x(1-x)} \\
r_{\pm} &= \left( \sqrt{\alpha(1-\alpha^*)} \pm \sqrt{\alpha^*(1-\alpha)} \right)^2.
\end{aligned}
\end{equation*}
The challenge is to compute the asymptotics of the integrals. For the first one, we compute:
\begin{equation} \label{eq:D7bound}
    \left| \frac{1}{J(\gamma)} \int_{r_-}^{r_+} w(x) \d x - \int_{r_-}^{r_+} \frac{x}{1 + (J(\gamma)-1)x} w(x) \d x \right| = \frac{1}{J(\gamma)^2} \int_{r_-}^{r_+} \frac{1-x}{x + (1-x)J(\gamma)^{-1}} w(x) \d x.
\end{equation}
We start by supposing that $r_- > 0$. Since $\alpha, \alpha^* > 0$, this corresponds to the case $\alpha \neq \alpha^*$. Then:
\begin{equation*}
    \int_{r_-}^{r_+} \frac{1-x}{x + (1-x)J(\gamma)^{-1}} w(x) \d x \leq \int_{r_-}^{r_+} \frac{1-x}{x} w(x) \d x < \infty,
\end{equation*}
so that in this case:
\begin{equation*}
    \int_{r_-}^{r_+} \frac{x}{1 + (J(\gamma)-1)x} w(x) \d x \underset{\gamma \to \infty}{=} \frac{1}{J(\gamma)} \int_{r_-}^{r_+} w(x) \d x + O \left( \frac{1}{J(\gamma)^2} \right).
\end{equation*}
The integral $\int_{r_-}^{r_+} w(x) \d x$ can be computed using the fact that $\mu$ has unit mass. Thus:
\begin{equation*}
    A(\gamma) \underset{\gamma \to \infty}{=} \frac{\min(\alpha, \alpha^*)}{\alpha J(\gamma)} + O \left( \frac{1}{J(\gamma)^2} \right),
\end{equation*}
whenever $r_- = 0$ this is not possible anymore. In this case, the right integral of equation \eqref{eq:D7bound} writes:
\begin{equation*}
    \frac{1}{2\pi \alpha J(\gamma)} \int_0^{r_+} \sqrt{\frac{r_+-x}{x}} \frac{\d x}{x J(\gamma) + 1-x}.
\end{equation*}
We change variables and let $x = r_+ \dfrac{t^2}{1+t^2}$. Decomposing in partial fractions:
\begin{equation*}
\begin{aligned}
    \frac{1}{2\pi} \int_0^{r_+} \sqrt{\frac{r_+-x}{x}} \frac{\d x}{x J(\gamma) + 1-x} &= \frac{r_+}{\pi} \int_0^\infty \frac{\d t}{(1+t^2) \big( 1 + (1 -r_+ + r_+ J(\gamma))t^2 \big)} \\
    &= \frac{1}{\pi} \left( - \frac{1}{J(\gamma) - 1} \int_0^\infty \frac{\d t}{1+t^2} + \frac{1 + r_+(J(\gamma)-1)}{J(\gamma)-1} \int_0^\infty \frac{\d t}{1 + (1 -r_+ + r_+ J(\gamma))t^2} \right) \\
    &= - \frac{1}{2} \frac{1}{J(\gamma)-1} + \frac{1}{2} \frac{\sqrt{1 +r_+(J(\gamma)-1)}}{J(\gamma)-1} \\
    &= O \left( J(\gamma)^{-1/2} \right),
\end{aligned}
\end{equation*}
so that we get:
\begin{equation*}
    \int_{r_-}^{r_+} \frac{x}{1 + (J(\gamma)-1)x} w(x) \d x \underset{\gamma \to \infty}{=} \frac{1}{J(\gamma)} \int_{r_-}^{r_+} w(x) \d x + O \left( \frac{1}{J(\gamma)^{3/2}} \right),
\end{equation*}
which proves the result for $A(\gamma)$ using equation \eqref{eq:D7AB}. Now for $B(\gamma)$:
\begin{equation} \label{eq:D7boundI1I2}
\begin{aligned}
    \int_{r_-}^{r_+} \left( \frac{1 + (e^{4 \gamma / \alpha^*}-1)x}{1+(J(\gamma)-1)x} \right)^2 w(x) \d x - \frac{e^{8 \gamma / \alpha^*}}{J(\gamma)^2} \int_{r_-}^{r_+} w(x) \d x &= \frac{e^{8 \gamma / \alpha^*}}{J(\gamma)^2} \Bigg[ \left( e^{-4 \gamma / \alpha^*} - J(\gamma)^{-1} \right) I_1(\gamma) \\
    &+\left( e^{-8 \gamma / \alpha^*} - J(\gamma)^{-2} \right) I_2(\gamma)\Bigg],
\end{aligned}
\end{equation}
with:
\begin{equation*}
    I_1(\gamma) = \int_{r_-}^{r_+} \frac{2x(1-x)}{\big(x + (1-x)J(\gamma)^{-1}\big)^2} w(x) \d x \hspace{1 cm} I_2(\gamma) = \int_{r_-}^{r_+} \frac{(1-x)^2}{\big(x + (1-x)J(\gamma)^{-1}\big)^2} w(x) \d x .
\end{equation*}
Clearly the last term is negligible with respect to the others. Again, for $r_- > 0$, i.e., $\alpha \neq \alpha^*$:
\begin{equation*}
    I_1(\gamma) \leq \int_{r_-}^{r_+} \frac{2(1-x)}{x} w(x) \d x < \infty \hspace{1 cm} I_2(\gamma) \leq  \int_{r_-}^{r_+} \frac{(1-x)^2}{x^2} w(x) \d x < \infty,
\end{equation*}
so that:
\begin{equation*}
    \int_{r_-}^{r_+} \left( \frac{1 + (e^{4 \gamma / \alpha^*-1)x}}{1+(J(\gamma)-1)x} \right)^2 w(x) \d x = \frac{e^{8 \gamma / \alpha^*}}{J(\gamma)^2} \left( \int_{r_-}^{r_+} w(x) \d x + O \big( e^{-4 \gamma / \alpha^*} \big) + O \big( J(\gamma)^{-1} \big) \right).
\end{equation*}
From \cref{lemma:overlap3}, we have $e^{-4 \gamma / \alpha^*} = O(J(\gamma)^{-1})$, which proves the first result. If $\alpha = \alpha^*$ thus $r_- = 0$, we apply the same trick as before. We have:
\begin{equation*}
\begin{aligned}
I_1(\gamma) &= \frac{J(\gamma)^2}{\pi \alpha} \int_0^{r_+} \sqrt{x(r_+-x)} \frac{\d x}{\big(1-x + xJ(\gamma)\big)^2} \\
&= \frac{2 r_+^2 J(\gamma)^2}{\pi \alpha} \int_0^\infty \frac{t^2\d t}{(1+t^2)\big( 1 + (1 -r_+ + r_+ J(\gamma))t^2 \big)^2} \\
&\underset{\gamma \to \infty}{\sim} \frac{r_+^2}{2 \alpha} J(\gamma)^{1/2}.
\end{aligned}
\end{equation*}
Since $2I_2(\gamma) \leq I_1(\gamma)$, the last term in equation \eqref{eq:D7boundI1I2} is negligible and from the expression of $r_+$:
\begin{equation*}
    \int_{r_-}^{r_+} \left( \frac{1 + (e^{4 \gamma / \alpha^*}-1)x}{1+(J(\gamma)-1)x} \right)^2 w(x) \d x = \frac{e^{8 \gamma / \alpha^*}}{J(\gamma)^2} \left( \int_{r_-}^{r_+} w(x) \d x + O \left( e^{-4 \gamma / \alpha^*} J(\gamma)^{1/2} \right) + O \left( J(\gamma)^{-1/2} \right)  \right),
\end{equation*}
from \cref{lemma:overlap3}, $e^{-4 \gamma / \alpha^*} J(\gamma)^{1/2} = O \left( J(\gamma)^{-1/2} \right)$, which proves the result using equation \eqref{eq:D7AB}. 

\vspace{0.5 cm}

\section{Numerical Experiments} \label{App:Numerics}

All numerical experiments were carried on a professional laptop equipped with a NVIDIA GeForce GTX 1650. The code is written in Python and uses Pytorch \citep{pytorch} to run the gradient descent algorithm on GPU (for finite dimensional simulations). As for the numerical integration of the high-dimensional equations, a simple Euler method was used to approximate integrals and differential equations. 

\paragraph{Training details. } 
\begin{itemize}
    \item For gradient descent (Figures \ref{fig1}, \ref{fig2}, \ref{fig:CVrate}), we used a stepsize $\eta = 10^{-2}$ and initialized the weights $W^0$ from a Gaussian distribution (Figures \ref{fig2}, \ref{fig:CVrate}) or orthonormally (Figure \ref{fig1}), i.e $W^0 \overset{\mathrm{distrib}}{=} U(U^TU)^{-1/2}$ with $U$ having i.i.d Gaussian entries. 
    \item To simulate $\phi(\gamma)$ (Figures \ref{fig1}, \ref{fig2}) in the high-dimensional limit, we numerically solved equation \eqref{eq:selfphi} by interpreting it as a differential equation on the vector $(F_\phi(\gamma), G_\phi(\gamma))$, and using a stepsize $\eta = 2 \times 10^{-5}$.
    \item The asymptotic behaviour of the overlap (Figure \ref{fig:Overlap}) was simulated from equation \eqref{eq:C4limitxi} using a simple discretization method and an adapted 1D grid (which helped capturing the high variations of the integration measure at the edges) to compute the integrals (with $9 \times 10^4$ integration points) in the case $\alpha \neq \alpha^*$, and an analytic formula for those integrals in the special case $\alpha = \alpha^* = 1/2$, allowing for a reduction of the numerical error. 
\end{itemize}

\end{document}